\sloppy \pagestyle{plain}
\newcounter{cequation}[section]
\newtheorem{theorem}[cequation]{Theorem}
\newtheorem*{theorem*}{Theorem}
\newtheorem{lemma}[cequation]{Lemma}
\newtheorem{corollary}[cequation]{Corollary}
\newtheorem{proposition}[cequation]{Proposition}
\theoremstyle{definition}
\newtheorem{example}[cequation]{Example}
\newtheorem{definition}[cequation]{Definition}
\newtheorem*{definition*}{Definition}
\theoremstyle{remark}
\newtheorem{remark}[cequation]{Remark}
\makeatletter\@addtoreset{equation}{section}
\newcommand{\QQ}{\mathbb{Q}}
\newcommand{\ZZ}{\mathbb{Z}}
\newcommand{\PP}{\mathbb{P}}
\newcommand{\KK}{\mathbb{K}}
\newcommand{\LL}{\mathbb{L}}
\newcommand{\MM}{\mathbb{M}}
\newcommand{\cS}{\mathcal{S}}
\newcommand{\mumu}{{\boldsymbol{\mu}}}
\newcommand{\Aut}{\operatorname{Aut}}
\newcommand{\Bir}{\operatorname{Bir}}
\newcommand{\PGL}{\operatorname{PGL}}
\newcommand{\GL}{\operatorname{GL}}
\newcommand{\SL}{\operatorname{SL}}
\newcommand{\Gal}{\operatorname{Gal}}
\newcommand{\Fix}{\operatorname{Fix}}
\def \ge {\geqslant}
\def \le {\leqslant}
\date{}
\title{Finite groups acting on Severi--Brauer surfaces}
\author{Constantin Shramov}
\address{Steklov Mathematical Institute of Russian Academy of Sciences, 8 Gubkina st.,
Moscow, 119991, Russia
\newline
National Research University Higher School of Economics, Laboratory of Algebraic Geometry, 6 Usacheva str., Moscow, 119048, Russia
}
\email{costya.shramov@gmail.com}
\begin{document}

\begin{abstract}
We classify finite groups that can act by automorphisms and
birational automorphisms on non-trivial Severi--Brauer surfaces
over fields of characteristic zero.
\end{abstract}

\maketitle
\tableofcontents

\section{Introduction}

A \emph{Severi--Brauer surface} over a field $\KK$ is a surface that becomes isomorphic to $\PP^2$ over the algebraic closure of~$\KK$.
Concerning birational automorphisms of non-trivial Severi--Brauer surfaces
(i.e. those that are not isomorphic to~$\PP^2$ over the base field)
the following is known.

\begin{theorem}[{\cite[Theorem~1.2(ii)]{Shramov-SB}}]
\label{theorem:J-3}
Let $S$ be a non-trivial Severi--Brauer surface over a field $\KK$ of characteristic zero.
Then every finite group acting by birational automorphisms of $S$ is either abelian, or contains a normal abelian subgroup
of index~$3$.
\end{theorem}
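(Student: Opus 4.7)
The plan is to reduce the birational problem to a biregular one via the $G$-equivariant minimal model program (MMP) over $\KK$, and then use the non-triviality of the Brauer class $[S] \in \Br(\KK)$ to restrict the possible outputs.

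Given a finite subgroup $G \subset \Bir(S)$, I would first regularize the action: there exists a smooth projective $\KK$-surface $\tilde S$ carrying a biregular $G$-action together with a $G$-equivariant birational map $\tilde S \dashrightarrow S$. Running the $G$-equivariant MMP on $\tilde S$ over $\KK$ produces a $G$-Mori fiber space $\pi \colon S' \to B$. Since $S_{\bar\KK} \cong \PP^2_{\bar\KK}$, the surface $S'$ is geometrically rational, so either $B = \operatorname{Spec}\KK$ and $S'$ is a del Pezzo surface with $\rkPic(S')^G = 1$, or $B$ is a geometrically rational curve and $\pi$ is a $G$-conic bundle with $\rkPic(S')^G = 2$.

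Next, I would use that $[S]$ has order $3$ in $\Br(\KK)$, so $S$ is not $\KK$-rational; this property passes to any smooth projective $\KK$-birational model. Combined with the constraint $\rkPic(S')^G \le 2$, this cuts $S'$ down to a short list --- a non-trivial Severi--Brauer surface (possibly $S$ itself), a del Pezzo surface of degree $6$ whose hexagon of $(-1)$-curves carries a Galois action reflecting a cubic Brauer class, or a $G$-conic bundle of restricted type --- for which the finite subgroups of the relevant $\KK$-automorphism groups can be enumerated. In each case one verifies directly that every non-abelian finite subgroup fits into an extension $1 \to A \to G \to \ZZ/3\ZZ \to 1$ with $A$ abelian, the factor $3$ matching the order of $[S]$ in $\Br(\KK)$.

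The main obstacle is this last case-by-case verification, and in particular the conic bundle case. There one analyses the exact sequence $1 \to G_0 \to G \to G_B \to 1$ with $G_0$ acting fibrewise and $G_B$ on the base $B$, and combines the Brauer obstruction with the structure of finite subgroups of $\PGL_2$ and of the automorphism groups of conics over $\KK$ to extract the required normal abelian subgroup of index $3$. The del Pezzo degree $6$ case is similarly delicate and requires a careful analysis of the compatibility between the Galois action on the $(-1)$-curve configuration and the cubic Brauer class; the cyclic quotient of order $3$ then arises from the natural map from the automorphism group of such a del Pezzo onto the Galois group of the cubic extension that trivialises $[S]$.
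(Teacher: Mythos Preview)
This theorem is not proved in the paper; it is quoted from \cite[Theorem~1.2(ii)]{Shramov-SB} and used as input (notably in the proof of Corollary~\ref{corollary:summarize-subgroups}). There is therefore no proof here to compare your proposal against.

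Your strategy---regularize, run the $G$-equivariant MMP over $\KK$, and analyze the outputs---is the standard one and is indeed what the cited reference does. But your case list is off in one important respect. Conic bundles do not occur: the index of a non-trivial Severi--Brauer surface is~$3$, so by Lang--Nishimura every closed point on any smooth projective $\KK$-surface birational to $S$ has degree divisible by~$3$; yet a conic bundle over a conic always carries a closed point of degree $1$, $2$, or $4$ in a fibre over a minimal-degree point of the base. The case you single out as ``the main obstacle'' is therefore vacuous, and the sketch you give for it is beside the point. Judging from this paper's Proposition~\ref{proposition:cubic} (which quotes \cite[Proposition~3.7]{Shramov-SB}), the genuine $G$-minimal models are Severi--Brauer surfaces---both $S$ and the opposite surface $S^{\op}$, a case you omit---together with certain pointless del Pezzo surfaces of lower degree, whose automorphism groups ultimately contribute only subgroups of~$\mumu_3^3$. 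Your degree-$6$ guess is in the right direction, but the real work in \cite{Shramov-SB} lies in this del Pezzo analysis and in the structure of $\Aut(S)$, not in conic bundles.
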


The following result was proved in \cite{Shramov-SB-short}.
We denote by $\mumu_n$ the cyclic group of order~$n$.

\begin{theorem}
\label{theorem:attained-p}
Let $p\equiv 1 \pmod 3$ be a prime number. Consider the non-trivial
homomorphism~\mbox{$\mumu_3\to\Aut(\mumu_p)$}, and let
$G_p\cong\mumu_p\rtimes\mumu_3$ be the corresponding semidirect product. Then
there exists a number field $\KK$ and a non-trivial Severi--Brauer surface $S$ over~$\KK$ such that
the group $\Aut(S)$ contains the group~$G_p$.
\end{theorem}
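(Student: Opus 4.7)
My plan is to realize $G_p$ inside the group $A^*/\KK^*$ for a suitably chosen non-split central simple $\KK$-algebra $A$ of degree~$3$; for the associated Severi--Brauer surface $S=\mathrm{SB}(A)$ there is a natural embedding $A^*/\KK^*\hookrightarrow\Aut(S)$ coming from the identification of the automorphism group of a Severi--Brauer variety with its projective unit group.

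First I would set $L=\QQ(\zeta_p)$ and take $F\subset L$ to be the unique subfield with $[L:F]=3$ (this subfield exists since $3\mid p-1$). The extension $L/F$ is then cyclic of degree~$3$, with Galois group generated by the automorphism $\sigma\colon\zeta_p\mapsto\zeta_p^r$, where $r$ is a fixed primitive cube root of unity modulo~$p$.

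Next I would pick $b\in F^*$ that is not a norm from $L^*$ and form the cyclic algebra $A=(L/F,\sigma,b)$ of degree~$3$ over $F$, taking $\KK=F$. Existence of such a $b$ is where the arithmetic input enters: by Chebotarev density there is a prime $\mathfrak{p}$ of $F$ that remains inert in $L$, and for any $b\in F^*$ with $v_\mathfrak{p}(b)$ coprime to~$3$ the local norm obstruction at $\mathfrak{p}$ combined with the Hasse norm theorem shows that $b$ is not a global norm. With this choice $A$ is non-split, so $S$ is a non-trivial Severi--Brauer surface over the number field~$F$.

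Finally I would exhibit the subgroup explicitly. Take $z=\zeta_p\in L\subset A$, and let $y\in A$ be the standard generator satisfying $y^3=b$ and $yly^{-1}=\sigma(l)$ for every $l\in L$. Since $F$ contains no non-trivial $p$-th root of unity, the image of $z$ in $A^*/F^*$ has order exactly~$p$; since $y\notin L\supseteq F$ while $y^3\in F^*$, the image of $y$ has order exactly~$3$. The relation $yzy^{-1}=\sigma(z)=z^r$ is preserved in $A^*/F^*$, so $\langle z,y\rangle\cong\mumu_p\rtimes\mumu_3\cong G_p$, giving a subgroup of $\Aut(S)$ of the required shape. The main obstacle I foresee is the previous step, namely exhibiting a suitable non-norm $b$; once this is done everything else is routine bookkeeping inside a degree-$3$ cyclic algebra.
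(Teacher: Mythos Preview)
Your proposal is correct and follows essentially the same route as the paper's proof of the generalization in Lemma~\ref{lemma:attained-n} (which specializes to the present statement when $n=p$): take the degree-$3$ subextension $\LL/\KK$ of $\QQ(\zeta_p)/\QQ$, form a cyclic algebra over $\KK$ using a non-norm element, and read off the generators $\zeta_p$ and $\alpha$ inside $A^*/\KK^*\cong\Aut(S)$. The only cosmetic difference is that you produce the non-norm $b$ via Chebotarev and a local obstruction at an inert prime (the invocation of the Hasse norm theorem is in fact unnecessary, since local non-norm trivially implies global non-norm), whereas the paper simply appeals to the general non-surjectivity of the norm map for Galois extensions of number fields (Lemma~\ref{lemma:norms}).
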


Due to the work of I.\,Dolgachev and V.\,Iskovskikh \cite{DI},
a classification of finite groups acting by birational
selfmaps of $\PP^2$ over an algebraically closed field of characteristic
zero is available.
The purpose of this paper is to give a
classification of finite groups acting by birational selfmaps on
non-trivial Severi--Brauer surfaces over fields of characteristic zero.
We will show that possible
finite subgroups of automorphism groups of non-trivial Severi--Brauer
surfaces are cyclic groups of certain orders, and semidirect product
of cyclic groups with~$\mumu_3$ of certain kind;
possible finite subgroups of birational
automorphism groups of non-trivial Severi--Brauer
surfaces are finite subgroups of automorphism groups
together with the group~$\mumu_3^3$.

To formulate a precise assertion, we need
to introduce additional notation.
Given a positive integer $n$, denote by $\mumu_n^*$ the multiplicative
group of residues modulo $n$ that are coprime to $n$.
Let $G$ be a semidirect product of $\mumu_n$ and $\mumu_3$
corresponding to a homomorphism
$$
\chi\colon \mumu_3\to\mumu_n^*\cong\Aut(\mumu_n).
$$
If~\mbox{$n=\prod p_i^{r_i}$}, where $p_i$ are pairwise
different primes congruent to $1$
modulo $3$,
we say that~$G$ is \emph{balanced} if the compositions of
$\chi$ with all natural projections $\mumu_n^*\to\mumu_{p_i^{r_i}}^*$
are injective.
An example of such a semidirect product is
the group $\mumu_3\cong\mumu_1\rtimes\mumu_3$ itself.
We point out that a balanced semidirect product of a given
order may be not unique.
The reader is referred to~\S\ref{section:semi-direct} for details.

\begin{theorem}\label{theorem:main}
Let $n$ be a positive integer, and let $G$ be a finite group.
The following assertions hold.
\begin{itemize}
\item[(i)] Let $n$ be a positive integer.
There exists a field $\KK$ of characteristic zero and
a non-trivial Severi--Brauer surface over $\KK$ such that the group
$\Bir(S)$ of birational automorphisms of $S$ contains an element
of order $n$
if and only if~\mbox{$n=3^r\prod p_i^{r_i}$},
where $p_i$ are primes congruent to $1$
modulo $3$, and $r\le 1$
(in other words, $n$
is not divisible by $9$ and not divisible by primes congruent to $2$
modulo $3$).
In this case the group $\Aut(S)$ contains an element of order $n$ as well.

\item[(ii)]
Let $G$ be a finite group.
There exists a field $\KK$ of characteristic zero and
a non-trivial Severi--Brauer surface over $\KK$ such that the group
$\Aut(S)$  contains a subgroup isomorphic to $G$ if and only if
there is a positive integer $n$
divisible only by primes congruent to $1$
modulo $3$ such that $G$
is isomorphic either to $\mumu_n$, or to $\mumu_{3n}$, or to a balanced
semidirect product $\mumu_n\rtimes\mumu_3$, or to
the direct product $\mumu_3\times(\mumu_n\rtimes\mumu_3)$,
where the semidirect product is balanced.

\item[(iii)]
Let $G$ be a finite group.
There exists a field $\KK$ of characteristic zero and
a non-trivial Severi--Brauer surface over $\KK$ such that the group
$\Bir(S)$  contains a subgroup isomorphic to $G$ if and only if
either $G$ is one of the groups listed in assertion~(ii),
or~\mbox{$G\cong\mumu_3^3$.}
\end{itemize}
\end{theorem}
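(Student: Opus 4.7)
The plan is to classify finite subgroups of $\Aut(S) \cong \PGL_1(A)$, where $A$ is the degree-$3$ central division algebra corresponding to $S$, and then leverage Theorem~\ref{theorem:J-3} together with equivariant Minimal Model Program to extend to $\Bir(S)$. I will use throughout that every degree-$3$ central simple algebra is cyclic, i.e.\ $A \cong (\LL/\KK,\sigma,c)$ for some cyclic cubic Galois extension $\LL/\KK$. For the necessity direction in (i) and (ii), take $g \in \Aut(S)$ of order $n$ with lift $\tilde g \in A^*$; then $\KK(\tilde g) \subset A$ is a cubic subfield (since $g \ne 1$) and $\tilde g$ satisfies an irreducible cubic over $\KK$ dividing $x^n - c$ with $c = \tilde g^n$. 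The three eigenvalues $\lambda_i \in \bar\KK$ of $\tilde g$ form a single $\Gal(\bar\KK/\KK)$-orbit of size~$3$; writing $\lambda_i = \zeta_i \mu$ for a fixed $\mu$ with $\mu^n = c$, the Galois $3$-cycle on the $\lambda_i$ combined with the action of Galois on $\mumu_n$ via some $a \in \mumu_n^*$ forces $a^3 \equiv 1 \pmod n$ together with consistency relations among the $\zeta_i$. A prime-power analysis of these relations yields precisely $n = 3^r \prod p_i^{r_i}$ with $r \le 1$ and $p_i \equiv 1 \pmod 3$.

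For non-abelian finite subgroups $G \subset \Aut(S)$, Theorem~\ref{theorem:J-3} gives an index-$3$ abelian normal subgroup $H$ which, by the previous analysis, embeds into the unit group of a cubic subfield of $A$ modulo $\KK^*$ and is therefore cyclic of order $n$ or $3n$ with $n$ as above. The quotient $G/H \cong \mumu_3$ acts on $H$ through the generator of $\Gal(\LL/\KK)$; decomposing $H$ into its prime-power components, this action is faithful on each $\mumu_{p_i^{r_i}}$ factor exactly when the corresponding minimal polynomial remains irreducible, which is the \emph{balanced} condition. For sufficiency, Theorem~\ref{theorem:attained-p} supplies the prime building blocks $\mumu_p \rtimes \mumu_3 \subset \Aut(S_p)$ for each $p \equiv 1 \pmod 3$; iterating to prime powers and assembling via tensor products of the associated cyclic algebras over a common cubic extension (while checking non-triviality in $\Br(\KK)$) realizes all listed groups, and the extra $\mumu_3$ factors in $\mumu_{3n}$ and $\mumu_3\times(\mumu_n\rtimes\mumu_3)$ are obtained by adjoining a primitive cube root of unity to the base. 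Part (i) then follows from (ii), since the cyclic groups listed there already exhaust the orders in the statement.

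Part (iii) is handled by equivariant MMP applied to a finite $G \subset \Bir(S)$: one obtains a $G$-minimal geometrically rational surface $X$ over $\KK$ on which $G$ acts biregularly, and since $S_{\bar\KK} \cong \PP^2$, the possibilities for $X$ are restricted. Either $X$ is itself a non-trivial Severi--Brauer surface, in which case assertion (ii) applies, or $X$ is one of a small family of exceptional models, and a direct analysis using Theorem~\ref{theorem:J-3} isolates $\mumu_3^3$ as the only new group that can arise, realized by a specific Heisenberg-type birational construction tied to the $3$-torsion of a compatible elliptic pencil. The principal obstacle I anticipate is the sufficiency step in (ii): combining the Theorem~\ref{theorem:attained-p} constructions across several primes inside a single non-split degree-$3$ algebra over a common field $\KK$ requires carefully coordinating the cyclic Galois data so that the resulting algebra remains a division algebra while the $\mumu_3$-action on $\mumu_n$ has the prescribed injective prime-power components.
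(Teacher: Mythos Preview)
Your necessity argument for single elements via the Galois orbit of eigenvalues is essentially the paper's approach (Lemmas~\ref{lemma:3k2} and~\ref{lemma:order-9}), and the overall architecture---reduce $\Bir$ to $\Aut$ via MMP, then use Theorem~\ref{theorem:J-3} to split off an abelian index-$3$ subgroup---matches the paper. However, there is a genuine gap in your treatment of abelian subgroups. You assert that an abelian $H \subset \Aut(S) \cong A^*/\KK^*$ ``embeds into the unit group of a cubic subfield of $A$ modulo $\KK^*$ and is therefore cyclic.'' This inference is unjustified: the preimage of an abelian subgroup of $A^*/\KK^*$ in $A^*$ is only a central extension by $\KK^*$ and need not be abelian, hence need not lie in any commutative subalgebra of $A$. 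Indeed $\mumu_3^2$ does embed in $\Aut(S)$ for suitable $S$ (Example~\ref{example:cyclic-algebra}), with preimage the non-abelian Heisenberg group; and nothing in your outline excludes $\mumu_p^2$ for $p\equiv 1\pmod 3$, or $\mumu_3^2\times\mumu_p$, or $3$-groups of order $\ge 27$. The paper closes this gap not algebraically but geometrically, through a fixed-point analysis on $S_{\bar\KK}\cong\PP^2$ (Lemma~\ref{lemma:unique-point}, Lemma~\ref{lemma:unique-fixed-point}, Corollaries~\ref{corollary:p-squared-on-P2}--\ref{corollary:3-squared-on-P2}, Lemma~\ref{lemma:3-squared-and-p}): each of these configurations would force a $\KK$-point on $S$. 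Likewise, the exclusion of non-balanced semidirect products is obtained by a Schur-lemma argument inside $\SL_3(\bar\KK)$ (Lemma~\ref{lemma:bad-semi-direct-product}), not by the minimal-polynomial criterion you sketch.

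Your sufficiency construction is also underspecified. The paper does not assemble the algebras from Theorem~\ref{theorem:attained-p} by tensor products---those live over different base fields, and a tensor product of degree-$3$ algebras has degree $9$. Instead it builds a single cyclic algebra directly over the fixed field of the balanced embedding $\chi\colon\mumu_3\hookrightarrow\Gal(\QQ(\xi_n)/\QQ)$ (Lemmas~\ref{lemma:attained-n} and~\ref{lemma:attained-3n}), invoking Lemma~\ref{lemma:norms} once to guarantee the algebra is division. For part~(iii), your MMP outline agrees with Proposition~\ref{proposition:cubic}, though the realization of $\mumu_3^3$ comes from a pointless cubic surface (cited from~\cite{Shramov-Cubics}) rather than an elliptic pencil.
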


One can reformulate Theorem~\ref{theorem:main}
as a description of the sets of all finite orders of elements
and all finite subgroups of the groups $\Aut(S)$ and $\Bir(S)$,
where $S$ varies in the set $\cS$ of all non-trivial Severi--Brauer surfaces
over all fields of characteristic zero.
Namely, denote
\begin{equation*}
\begin{aligned}
\mathcal{AO}=&\left\{ n\mid n\in\ZZ_{\ge 1}, \text{\ and there exists\ }
S\in\cS \right.\\
&\left.
\text{such that\ } \Aut(S)
\text{\ contains an element of order\ } n\right\},\\
\mathcal{BO}=&\left\{ n\mid n\in\ZZ_{\ge 1}, \text{\ and there exists\ }
S\in\cS\right. \\
&\left.
\text{such that\ } \Bir(S)
\text{\ contains an element of order\ } n\right\},\\
\mathcal{AG}=&\left\{ G\mid |G|<\infty, \text{\ and there exists\ }
S\in\cS\right. \\
&\left.
\text{such that\ } \Aut(S)
\text{\ contains a subgroup isomorphic to\ } G\right\},\\
\mathcal{BG}=&\left\{ G\mid |G|<\infty, \text{\ and there exists\ }
S\in\cS\right. \\
&\left.
\text{such that\ } \Bir(S)
\text{\ contains a subgroup isomorphic to\ } G\right\}.
\end{aligned}
\end{equation*}
Also, denote by $\mathcal{N}$ the set of all positive integers
of the form~\mbox{$n=\prod p_i^{r_i}$},
where $p_i$ are primes congruent to $1$
modulo $3$, and denote
$$
\mathcal{B}=\left\{G \mid G\cong\mumu_n\rtimes\mumu_3
\text{\ is a balanced semidirect product for some\ }
n\in\mathcal{N}\right\}.
$$
In these notation, Theorem~\ref{theorem:main}
claims that
\begin{equation*}
\begin{aligned}
&\mathcal{AO}=\mathcal{BO}=\mathcal{N}\cup
\left\{3n\mid n\in \mathcal{N}\right\},\\
&\mathcal{AG}=\left\{\mumu_n\mid n\in\mathcal{N}\right\}\cup
\left\{\mumu_{3n}\mid n\in\mathcal{N}\right\}\cup
\mathcal{B}\cup\left\{\mumu_3\times G\mid G\in\mathcal{B}\right\},\\
&\mathcal{BG}=\mathcal{AG}\cup\left\{\mumu_3^3\right\}.
\end{aligned}
\end{equation*}

As we will see in Remark~\ref{remark:always-3},
every Severi--Brauer surface over a field of characteristic zero
has an automorphism of order $3$. However, over some fields
birational automorphism groups of non-trivial Severi--Brauer
surfaces may be rather poor in finite subgroups.
For instance, in the case of the field $\QQ$ of rational numbers,
Theorem~\ref{theorem:main} implies the following simpler classification.

\begin{corollary}
\label{corollary:Q}
Let $S$ be a non-trivial Severi--Brauer surface over~$\QQ$, and let
$G$ be a finite subgroup of $\Bir(S)$.
Then $G$ is a subgroup of $\mumu_3^3$. In particular, $G$ is abelian.
\end{corollary}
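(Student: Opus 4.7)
The plan is to combine Theorem~\ref{theorem:main}(iii) with the arithmetic of central division algebras of degree $3$ over $\QQ$. First, I would invoke Theorem~\ref{theorem:main}(iii) with $\KK=\QQ$: the group $G$ must be isomorphic to one of $\mumu_n$, $\mumu_{3n}$, a balanced $\mumu_n\rtimes\mumu_3$, a direct product $\mumu_3\times(\mumu_n\rtimes\mumu_3)$ (all for some $n\in\mathcal{N}$), or $\mumu_3^3$. Since every element of $\mathcal{N}$ is coprime to $3$, the only entries of this list that embed abstractly into $\mumu_3^3$ correspond to $n=1$ (yielding the trivial group, $\mumu_3$, or $\mumu_3^2$) or to $\mumu_3^3$ itself. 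Every other entry on the list contains a subgroup isomorphic to $\mumu_p$ for some prime $p$ with $p\equiv 1\pmod{3}$ and $p\ge 7$. It therefore suffices to show that no such $\mumu_p$ embeds into $\Bir(S)$.

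Next, I would reduce the birational assertion to one about regular automorphisms. The plan is to apply $\mumu_p$-equivariant minimal model programme to a smooth $\mumu_p$-equivariant projective model of $S$ over $\QQ$, obtaining a $\mumu_p$-Mori fibre space $S'$ birational to $S$; by Lang--Nishimura one has $S'(\QQ)=\emptyset$. A case analysis of the possible endpoints, using that $S'$ is geometrically rational, has no $\QQ$-point, and carries a cyclic action of prime order $p\ge 7$, should rule out the conic bundle case and all del~Pezzo degrees except $9$, thereby pinning $S'$ down as a non-trivial Severi--Brauer surface over $\QQ$ with $\mumu_p\subseteq\Aut(S')$. I expect this MMP step to be the main obstacle, since it requires excluding cyclic actions of large prime order on every lower-degree geometrically rational surface potentially arising as a $\mumu_p$-Mori model; most of these exclusions should, however, follow directly from the classification that underpins Theorem~\ref{theorem:main}.

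Finally, I would obtain the contradiction via the Severi--Brauer correspondence. Writing $\Aut(S')=\PGL_1(A)=A^*/\QQ^*$ with $A$ a central division algebra of degree $3$ over $\QQ$, an element of order $p$ lifts to some $a\in A^*\setminus\QQ^*$ with $a^p=c\in\QQ^*$. Since $A$ is a degree-$3$ division algebra, every commutative subfield has degree dividing $3$, so $[\QQ[a]:\QQ]=3$, and the minimal polynomial of $a$ over $\QQ$ is an irreducible degree-$3$ factor of $x^p-c$. However, for $p\ge 7$ prime the classical factorisation of $x^p-c$ over $\QQ$ consists either of a single irreducible polynomial of degree $p$ (when $c$ is not a $p$-th power in $\QQ$), or of a linear factor $x-c_0$ together with an irreducible factor of degree $p-1$ (when $c=c_0^p$ with $c_0\in\QQ$). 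Since neither $p$ nor $p-1$ equals $3$, no irreducible factor of degree $3$ exists, which is the desired contradiction.
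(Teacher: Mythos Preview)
Your proposal is correct, and its final step coincides almost verbatim with the paper's ``second proof'' of Lemma~\ref{lemma:7-over-Q}: lift an order-$p$ element to $a\in A^*$ with $a^p\in\QQ$, observe that $\QQ[a]$ is a cubic field, and note that $x^p-c$ has no irreducible cubic factor over~$\QQ$ when $p\ge 5$. The organization, however, is more circuitous than the paper's.

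The detour you identify as ``the main obstacle''---running $\mumu_p$-MMP and excluding all lower-degree endpoints---is already packaged in the paper as Proposition~\ref{proposition:cubic} (which in turn cites \cite[Proposition~3.7]{Shramov-SB}): any finite $G\subset\Bir(S)$ is either a subgroup of $\mumu_3^3$ or isomorphic to a subgroup of $\Aut(S)$. So there is no need to restrict first to cyclic $\mumu_p$ via Theorem~\ref{theorem:main}(iii) and then redo the MMP case analysis; the paper simply cites Proposition~\ref{proposition:cubic} for the whole group $G$, applies Lemma~\ref{lemma:7-over-Q} to conclude that $|G|$ has no prime factor $\ge 5$, invokes Lemma~\ref{lemma:3k2} (or Theorem~\ref{theorem:main}(i)) to see that $|G|$ is odd, and finishes with Corollary~\ref{corollary:3-squared-on-P2} to get $G\subset\mumu_3^2$. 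Your route through Theorem~\ref{theorem:main}(iii) works, but that theorem itself already rests on Proposition~\ref{proposition:cubic}, so you are using the heavier statement to set up a reproof of a special case of the lighter one.
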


Using the terminology of \cite[Definition~1]{Popov14},
we see from Corollary~\ref{corollary:Q}
that the Jordan constant of the birational automorphism group
of every non-trivial Severi--Brauer surface over~$\QQ$ equals $1$,
while for a non-trivial Severi--Brauer surface over an arbitrary field
of characteristic zero it does not exceed~$3$
by Theorem~\ref{theorem:J-3}.
Note that the Jordan constant of the birational automorphism group
of the projective plane over~$\QQ$ equals~$120$
by \cite[Theorem~1.11]{Yasinsky}.
I do not know
if the groups $\mumu_3^2$ or $\mumu_3^3$ can actually be embedded
into birational automorphism groups of some non-trivial Severi--Brauer
surfaces over~$\QQ$.

It would be interesting to obtain a complete classification
of finite groups acting on each non-trivial Severi--Brauer surface depending
on the arithmetic properties of the base field and the correspondning
central simple algebra,
similarly to what was done for conics in~\cite{Garcia-Armas},
cf.~\cite{Beauville}.
Also, it would be interesting to find out
if there exists an (infinite) extension~$\KK'$ of
$\QQ$ and a non-trivial Severi--Brauer surface $S'$
over $\KK'$ such that the group~\mbox{$\Aut(S')$} contains all the groups
listed in Theorem~\ref{theorem:main}(ii), and
$\Bir(S')$ contains all the groups
listed in Theorem~\ref{theorem:main}(iii).

\smallskip
The plan of the paper is as follows.
In~\S\ref{section:semi-direct}  we collect some auxiliary
assertions concerning semidirect
products of cyclic groups.
In~\S\ref{section:preliminaries}
we gather auxiliary facts concerning Severi--Brauer surfaces
and Galois theory.
In~\S\ref{section:Gp}
we construct examples of finite groups acting on non-trivial
Severi--Brauer surfaces.
In~\S\ref{section:prime}
we describe the possible finite orders
of automorphisms of non-trivial Severi--Brauer surfaces.
In~\S\ref{section:subgroups}
we classify finite subgroups of automorphism groups
of non-trivial Severi--Brauer surfaces
and prove Theorem~\ref{theorem:main}.
In~\S\ref{section:Q}
we prove Corollary~\ref{corollary:Q}.

\smallskip
\textbf{Notation.}
Given a field $\KK$, we denote by $\bar{\KK}$ its algebraic closure.
For a variety $X$ defined over $\KK$, we denote by $X_{\bar{\KK}}$
its extension of scalars to~$\bar{\KK}$.

\smallskip
\textbf{Acknowledgements.}
I am grateful to S.\,Gorchinskiy, L.\,Rybnikov,
A.\,Trepalin, and V.\,Vologodsky for useful discussions.
I am also grateful to the referee for 
helpful comments, and especially for Remark~\ref{remark:referee}. 
Special thanks go to D.\,Osipov who spotted a gap in a preliminary version
of the paper and suggested several improvements of the exposition.
I was partially supported by
the HSE University Basic Research Program,
Russian Academic Excellence Project~\mbox{``5-100''},
and by the Foundation for the
Advancement of Theoretical Physics and Mathematics ``BASIS''.

\section{Semidirect products}
\label{section:semi-direct}

In this section we collect some elementary assertions concerning semidirect
products of certain finite cyclic groups.

Let $n$ be a positive integer. Write
\begin{equation*}
n=\prod\limits_{i=1}^k p_i^{r_i},
\end{equation*}
where $p_i$ are pairwise different prime numbers.
There is a canonical isomorphism
$$
\mumu_n\cong\prod\limits_{i=1}^k \mumu_n(p_i),
$$
where $\mumu_n(p_i)\cong\mumu_{p_i^{r_i}}$ is the $p_i$-Sylow subgroup of
$\mumu_n$.
For the multiplicative group $\mumu_n^*$ of
residues modulo~$n$ that are coprime to~$n$,
one has a canonical isomorphism
\begin{equation*}
\mumu_n^*\cong\prod\limits_{i=1}^k \mumu_n(p_i)^*,
\end{equation*}
where $\mumu_n(p_i)^*\cong \mumu_{p_i^{r_i}}^*$.
If $p_i>2$, we have
$$
\mumu_n(p_i)^*\cong \mumu_{p_i^{r_i-1}(p_i-1)},
$$
see for instance \cite[\S\,II.2]{Lang}.

Recall that the group $\Aut(\mumu_n)$ is canonically identified
with $\mumu_n^*$. Thus, for every homomorphism
$\chi\colon \mumu_3\to\mumu_n^*$ we can construct a semidirect product
$G\cong\mumu_n\rtimes\mumu_3$. Vice versa, every semidirect product
corresponds to some homomorphism $\chi$.

\begin{definition}\label{definition:balanced}
Suppose that $n$ is divisible only by primes congruent to $1$
modulo $3$, and
let~\mbox{$\chi\colon \mumu_3\to\mumu_n^*$} be a homomorphism.
We say that $\chi$ is \emph{balanced}
if its composition with each of the projections $\mumu_n^*\to\mumu_n(p_i)^*$
is an embedding.
We say that a semidirect product~$G$
of $\mumu_n$ and $\mumu_3$ corresponding to the homomorphism $\chi$
is \emph{balanced} if~$\chi$ is balanced.
\end{definition}

\begin{example}
Let $n=1$. Then the set of prime divisors of $n$ is empty, and thus
Definition~\ref{definition:balanced} does not impose any condition
on a semidirect product. Thus,
the semidirect product $\mumu_3\cong\mumu_1\rtimes\mumu_3$
is balanced.
\end{example}

\begin{example}
Let $p$ be a prime number congruent to $1$ modulo~$3$.
Then the group $G_p$ described in Theorem~\ref{theorem:attained-p}
is a balanced semidirect product of $\mumu_p$ and $\mumu_3$.
Moreover, this group is the unique balanced semidirect product of
$\mumu_p$ and $\mumu_3$.
\end{example}

\begin{example}\label{example:not-balanced}
Let $p_1$ and $p_2$ be distinct prime numbers congruent to $1$ modulo $3$,
and let $G_{p_1}\cong\mumu_{p_1}\rtimes\mumu_3$ be the balanced
semidirect product.
Then the group $G_{p_1}\times \mumu_{p_2}$
is isomorphic to a semidirect product $\mumu_{p_1p_2}\rtimes\mumu_3$
which is not balanced. Indeed, the image of the corresponding homomorphism
$$
\chi\colon \mumu_3\to\mumu^*_{p_1p_2}\cong \mumu_{p_1-1}\times\mumu_{p_2-1}
$$
is contained in the subgroup $\mumu_{p_1p_2}(p_1)^*\cong\mumu_{p_1-1}$
of~$\mumu^*_{p_1p_2}$.
\end{example}

We point out that for a given positive integer $n$
a balanced semidirect product of
order~$3n$ may be not unique.

\begin{example}
Let $p_1$ and $p_2$ be distinct prime numbers congruent to $1$ modulo $3$,
and let $n=p_1p_2$. Then $\mumu_n^*\cong  \mumu_{p_1-1}\times\mumu_{p_2-1}$,
and each of the (cyclic) groups $\mumu_n(p_i)^*\cong\mumu_{p_i-1}$
contains a unique subgroup of order~$3$. Let $\delta_i$, $i=1,2$, be generators of these subgroups.
Set $d_1=\delta_1\delta_2$ and $d_2=\delta_1\delta_2^{-1}$.
Let $\chi_1, \chi_2\colon\mumu_3\to\mumu_n^*$ be the homomorphisms that send
a generator of $\mumu_3$ to $d_1$ and~$d_2$, respectively. Construct the groups
$G_1$ and $G_2$ as semidirect products of $\mumu_n$ and~$\mumu_3$ corresponding to the homomorphisms $\chi_1$ and $\chi_2$, respectively. We claim that
$G_1$ is not isomorphic to $G_2$. Indeed, while it is easy to construct
an automorphism $\zeta$ of $\mumu_n^*$
such that $\chi_2=\zeta\circ\chi_1$, there does not exist an \emph{inner}
automorphism~$\zeta$ of $\mumu_n^*$ like this
(that is, the automorphism~$\zeta$ cannot be obtained as a conjugation with an element
of~$\mumu_n^*$), because the group $\mumu_n^*$ is abelian.
Therefore, the groups $G_1$ and $G_2$ are not isomorphic to each
other by~\mbox{\cite[Theorem~5.1]{BE99}}.
\end{example}

The next result shows that all non-balanced semidirect products
have a structure similar to what we see in Example~\ref{example:not-balanced}.

\begin{lemma}\label{lemma:non-balanced-structure}
Let $n$ be a positive integer divisible only by primes congruent to $1$
modulo~$3$,
let $\chi\colon \mumu_3\to\mumu_n^*$ be a homomorphism,
and let $G$ be a
semidirect product of $\mumu_n$ and~$\mumu_3$ corresponding to $\chi$.
Then $\chi$ is balanced if and only if the only element of~$\mumu_n$
that commutes with
the subgroup $\mumu_3$ is trivial. Moreover,
if $\chi$ is not balanced, then there exists
an isomorphism
\begin{equation}\label{eq:non-balanced-structure}
G\cong(\mumu_{n_1}\rtimes\mumu_3)\times\mumu_{n_2},
\end{equation}
where $n_1n_2=n$ and $n_2>1$.
\end{lemma}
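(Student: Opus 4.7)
The plan is to reduce everything to an analysis of the $p$-primary components of $\mumu_n$. Using the canonical isomorphism $\mumu_n\cong\prod_{i=1}^k\mumu_n(p_i)$ and the induced decomposition of the automorphism group, the homomorphism $\chi$ breaks up into a tuple of homomorphisms $\chi_i\colon\mumu_3\to\mumu_n(p_i)^*$, and by Definition~\ref{definition:balanced} $\chi$ is balanced if and only if each $\chi_i$ is injective. An element $(a_1,\ldots,a_k)\in\prod_i\mumu_n(p_i)=\mumu_n$ is fixed by $\mumu_3$ if and only if each $a_i$ is fixed by the $\mumu_3$-action via $\chi_i$, so the first assertion reduces to the componentwise claim that $\mumu_n(p_i)^{\mumu_3}$ is trivial precisely when $\chi_i$ is injective.

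The heart of the proof is this componentwise computation. Let $\zeta\in(\ZZ/p_i^{r_i}\ZZ)^*$ be the residue through which $\mumu_3$ acts on $\mumu_n(p_i)\cong\mumu_{p_i^{r_i}}$, so that a generator of $\mumu_3$ sends $a\mapsto a^{\zeta}$ and the fixed-point condition becomes $a^{\zeta-1}=1$. If $\chi_i$ is trivial then $\zeta=1$ and everything is fixed. If $\chi_i$ is injective then $\zeta$ is a nontrivial cube root of unity in $(\ZZ/p_i^{r_i}\ZZ)^*$, and I want to show that $\zeta-1$ is coprime to $p_i$; this will force $a=1$ because $\mumu_{p_i^{r_i}}$ is a $p_i$-group. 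Here the key observation is that, since $p_i\equiv 1\pmod 3$, the cyclic group $(\ZZ/p_i^{r_i}\ZZ)^*$ has $3$-torsion of order exactly $3$, whereas the congruence kernel $1+p_i\ZZ/p_i^{r_i}\ZZ$ is a $p_i$-group; these two subgroups meet trivially, so reduction modulo $p_i$ is injective on cube roots of unity, and therefore $\zeta\not\equiv 1\pmod{p_i}$. This establishes the first assertion.

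For the moreover part, observe that each $\chi_i\colon\mumu_3\to\mumu_n(p_i)^*$ is either trivial or injective, so the indices partition into $I_1=\{i:\chi_i\text{ injective}\}$ and $I_2=\{i:\chi_i\text{ trivial}\}$. Set $n_j=\prod_{i\in I_j}p_i^{r_i}$; the failure of the balanced condition means $I_2\neq\varnothing$, so $n_2>1$ and $n_1n_2=n$. Inside $G$, the subgroup $\mumu_{n_2}\subset\mumu_n$ is fixed pointwise by $\mumu_3$ and commutes with the abelian group $\mumu_n$, hence lies in the center of $G$. The subgroup $H$ generated by $\mumu_{n_1}\subset\mumu_n$ together with the $\mumu_3\subset G$ is stable under conjugation and isomorphic to $\mumu_{n_1}\rtimes\mumu_3$; since $H\cap\mumu_{n_2}=\{1\}$ and $H\cdot\mumu_{n_2}=G$, this yields the internal direct product decomposition \eqref{eq:non-balanced-structure}. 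The main obstacle in the whole argument is the unit claim for $\zeta-1$; everything else is bookkeeping on the prime factorization.
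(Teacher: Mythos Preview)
Your proof is correct and follows the same overall strategy as the paper: reduce to the $p$-primary components, show that on each component the $\mumu_3$-fixed points are trivial precisely when $\chi_i$ is injective, and then assemble the direct-product decomposition from the trivial components.

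The one genuine difference lies in how you handle the key arithmetic fact that a nontrivial cube root of unity $\zeta\in(\ZZ/p^{r}\ZZ)^*$ satisfies $\zeta\not\equiv 1\pmod p$. The paper argues by contradiction via an explicit computation: assuming $d\equiv 1\pmod p$, it takes the maximal $s$ with $d\equiv 1\pmod{p^{s}}$, writes $d\equiv 1+cp^{s}\pmod{p^{s+1}}$, cubes, and obtains $3cp^{s}\equiv 0\pmod{p^{s+1}}$, impossible since $p\nmid 3c$. Your argument is more structural: the $3$-torsion of the cyclic group $(\ZZ/p^{r}\ZZ)^*$ meets the congruence kernel $1+p\ZZ/p^{r}\ZZ$ (a $p$-group) trivially, so reduction mod $p$ is injective on cube roots of unity. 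Both are valid; yours is shorter and makes the role of the hypothesis $p\equiv 1\pmod 3$ (hence $p\neq 3$) more transparent, while the paper's computation is self-contained and avoids invoking the structure of $(\ZZ/p^{r}\ZZ)^*$. A minor cosmetic difference is that in the ``moreover'' part the paper peels off a single prime with trivial $\chi_i$, whereas you collect all of them at once into $n_2$; this is exactly the refinement the paper records separately as Remark~\ref{remark:improved-non-balanced}.
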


\begin{proof}
Suppose that $\chi$ is not balanced. Then there exists a prime divisor
$p$ of $n$ such that the composition of $\chi$ with the projection
$\mumu_n^*\to\mumu_n(p)^*$ is trivial. Hence, the group $\mumu_3$ commutes with the $p$-Sylow subgroup $\mumu_n(p)\cong p^r$ of $\mumu_n$.
Therefore, we get a required isomorphism
$G\cong(\mumu_{n_1}\rtimes\mumu_3)\times\mumu_{n_2}$, where
$n_1=\frac{n}{p^r}$ and $n_2=p^r$.

Conversely, suppose that $\chi$ is balanced. Denote by $d\in\mumu_n^*$
the image of a generator of $\mumu_3$ with respect to $\chi$, so that
$d^3\equiv 1\pmod n$.
Assume that there exists a non-trivial element $t\in\mumu_n$
that commutes with $\mumu_3$.
Considering~$\mumu_n$ as the group of residues modulo~$n$,
we can write
$$
td\equiv t\pmod n.
$$
Since~$\mumu_n$ is isomorphic to a direct product of its Sylow subgroups,
we conclude that there is a prime divisor $p$ of $n$
such that the image $t_p$ of $t$ in the $p$-Sylow subgroup
$\mumu_n(p)\cong\mumu_{p^r}$ of~$\mumu_n$ is non-trivial.
We have
\begin{equation}\label{eq:tp}
t_pd\equiv t_p\pmod {p^r}.
\end{equation}
Also, we know that
$d\not\equiv 1\pmod {p^r}$ because the homomorphism $\chi$ is balanced,
and~\mbox{$d^3\equiv 1\pmod {p^r}$}.

We see from~\eqref{eq:tp} that $d\equiv 1\pmod p$. Let $s$ be the maximal
integer such that~\mbox{$d\equiv 1\pmod {p^s}$}; then $1\le s<r$.
Write
$$
d\equiv cp^{s}+1\pmod {p^{s+1}},
$$
where $c\not\equiv 0\pmod p$.
One has
$$
1\equiv d^3\equiv 3cp^{s}+1 \pmod {p^{s+1}}.
$$
Since the numbers $3$ and $c$ are not divisible by $p$,
this gives a contradiction.
\end{proof}

\begin{remark}\label{remark:improved-non-balanced}
In the notation of Lemma~\ref{lemma:non-balanced-structure}, one can choose
$n_1$ and $n_2$ so that the semidirect product
$\mumu_{n_1}\rtimes\mumu_3$ in~\eqref{eq:non-balanced-structure}
is balanced.
\end{remark}

We conclude this section by two simple
general observations concerning semidirect
products.

\begin{lemma}\label{lemma:semi-direct-product-relations}
Let $n$ and $m$ be positive integers, and let $d$ be an integer such
that~\mbox{$d^m\equiv 1\pmod n$}.
Let $H$ be a group generated by an element $x$ of order $n$
and an element $y$ of order $m$ subject to relation
\begin{equation}\label{eq:xyyx}
xy=yx^d.
\end{equation}
Suppose that the cyclic subgroups generated by $x$ and $y$ have trivial
intersection.
Then~\mbox{$H\cong \mumu_n\rtimes\mumu_m$}, and the semidirect product
structure corresponds to the homomorphism~\mbox{$\mumu_m\to\mumu_n^*$}
sending a generator of $\mumu_m$ to $d\in\mumu_n^*$.
\end{lemma}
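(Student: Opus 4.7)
The plan is to pin down the order of $H$ by establishing a normal form for its elements, and then to match $H$ with the standard semidirect product by counting.

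First, I would derive from the defining relation $xy=yx^d$ the more general commutation rule $x^iy^j=y^jx^{id^j}$ for all integers $i,j\ge 0$, by induction on $j$ starting from the given case $j=1$. Since $d^m\equiv 1\pmod n$, the exponent $d^j$ is well defined modulo $n$, so the identity makes sense in $H$. Using this rule, together with $x^n=1$ and $y^m=1$, any word in $x^{\pm 1}$ and $y^{\pm 1}$ can be rewritten so that every occurrence of $x$ is pushed to the right of every occurrence of $y$; this yields a representative of the form $y^bx^a$ with $0\le b<m$ and $0\le a<n$. Hence $|H|\le nm$.

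Second, I would use the hypothesis $\langle x\rangle\cap\langle y\rangle=\{1\}$ to verify that these $nm$ normal forms represent distinct elements: if $y^bx^a=y^{b'}x^{a'}$, then $y^{b-b'}=x^{a'-a}$ lies in the trivial intersection, which forces $b\equiv b'\pmod m$ and $a\equiv a'\pmod n$. Consequently $|H|=nm$.

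Third, let $\tilde H=\mumu_n\rtimes\mumu_m$ denote the semidirect product corresponding to the homomorphism $\chi\colon\mumu_m\to\mumu_n^*$ that sends a fixed generator of $\mumu_m$ to the residue $d$. Its canonical generators $\tilde x$ and $\tilde y$ have orders $n$ and $m$ and satisfy the relation $\tilde x\tilde y=\tilde y\tilde x^d$; these relations in fact give a presentation of $\tilde H$. Therefore the assignments $\tilde x\mapsto x$, $\tilde y\mapsto y$ extend to a well-defined surjective homomorphism $\tilde H\to H$. Since $|\tilde H|=nm=|H|$, this homomorphism is an isomorphism, and it transports the semidirect product structure on $\tilde H$ determined by $\chi$ to the group $H$.

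The argument is essentially bookkeeping; the only delicate point is verifying that the single relation $xy=yx^d$ together with $x^n=y^m=1$ suffices to rewrite every word in the normal form, and this is precisely what the inductive derivation of $x^iy^j=y^jx^{id^j}$ handles. The trivial intersection hypothesis is used exclusively to secure the lower bound on $|H|$, which is what allows one to upgrade the surjection $\tilde H\twoheadrightarrow H$ to an isomorphism.
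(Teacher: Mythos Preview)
Your proof is correct and follows essentially the same approach as the paper: rewrite every element in the normal form $y^bx^a$ using the relation, use the trivial intersection hypothesis to show these forms are distinct so that $|H|=nm$, and then compare orders with the standard semidirect product $\tilde H$ to upgrade the natural surjection $\tilde H\to H$ to an isomorphism. Your treatment is slightly more explicit about the inductive derivation of $x^iy^j=y^jx^{id^j}$, but the argument is otherwise identical to the paper's.
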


\begin{proof}
We see from relation~\eqref{eq:xyyx}
that every element of $H$ can be written in the form~$y^rx^s$
for some positive integers $r$ and $s$.
Therefore, $H$ is a quotient of the
semidirect product~\mbox{$\tilde{H}\cong \mumu_n\rtimes\mumu_m$}
corresponding to the homomorphism
$\mumu_m\to\mumu_n^*$ that sends a generator of~$\mumu_m$ to $d\in\mumu_n^*$.
On the other hand, if for some $r_1$, $r_2$, $s_1$, and $s_2$
one has~\mbox{$y^{r_1}x^{s_1}=y^{r_2}x^{s_2}$}, then
$y^{r_1-r_2}=x^{s_2-s_1}$.
Since the cyclic subgroups generated by $x$ and $y$ have trivial
intersection, this implies that $r_1\equiv r_2\pmod m$
and $s_1\equiv s_2\pmod n$. Hence~\mbox{$|H|=mn=|\tilde{H}|$},
which means that $H\cong\tilde{H}$.
\end{proof}

\begin{lemma}\label{lemma:is-semidirect}
Let $G$ be a finite group that does not contain
elements of order~$9$, and let~$H$ be a normal subgroup of index~$3$ in~$G$.
Then $G\cong H\rtimes \mumu_3$.
\end{lemma}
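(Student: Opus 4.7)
The plan is to construct an order-$3$ subgroup $K\le G$ that is a complement to $H$; once this is done, the normality of $H$ promotes the internal decomposition $G=HK$ with $H\cap K=1$ to an isomorphism $G\cong H\rtimes K\cong H\rtimes\mumu_3$.

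To produce such a $K$, I would start with an arbitrary element $g\in G\setminus H$. Since $G/H\cong\mumu_3$, the coset $gH$ has order $3$, so $g^3\in H$ and the order of $g$ is divisible by~$3$. Write $|g|=3^a m$ with $\gcd(m,3)=1$ and $a\ge 1$. The hypothesis that $G$ contains no element of order~$9$ forces $a=1$: indeed, if $a\ge 2$, the power $g^{3^{a-2}m}$ would have order exactly~$9$. Hence $|g|=3m$ with $\gcd(m,3)=1$.

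Now set $x=g^m$, so $|x|=3$. The image of $x$ in $G/H\cong\mumu_3$ equals $(gH)^m$, which still has order $3$ because $\gcd(m,3)=1$. Consequently $x\notin H$, and since $\langle x\rangle$ has prime order $3$, this already gives $\langle x\rangle\cap H=1$. Taking $K=\langle x\rangle\cong\mumu_3$, we have $|HK|=|H|\cdot|K|=3|H|=|G|$, so $G=HK$. Combined with $H\trianglelefteq G$ and $H\cap K=1$, this is precisely the internal semidirect product decomposition $G=H\rtimes K\cong H\rtimes\mumu_3$.

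No step presents a real obstacle: the whole argument is the elementary observation that, in the absence of order-$9$ elements, every element of $G\setminus H$ has order of the form $3m$ with $\gcd(m,3)=1$, which immediately yields an order-$3$ complement to $H$.
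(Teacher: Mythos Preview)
Your proof is correct and follows essentially the same route as the paper's: pick $g\in G\setminus H$, use the absence of order-$9$ elements to see that $|g|=3m$ with $\gcd(m,3)=1$, and take $g^m$ as a generator of an order-$3$ complement to $H$. Your write-up is slightly more detailed (e.g.\ the explicit verification that $g^m\notin H$ and that $|HK|=|G|$), but the argument is the same.
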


\begin{proof}
Choose a preimage $g$ of
a generator of the group $G/H\cong\mumu_3$ with respect to the projection
$\psi\colon G\to G/H$. The order of $g$ is divisible by $3$, but is not
divisible by $9$. Thus it equals $3m$ for some positive integer $m$
not divisible by $3$. Hence the element $g'=g^m$ has order $3$,
and $\psi(g')$ generates $G/H$.
This means that the subgroup $F\cong\mumu_3$ in $G$ generated by $g'$
has trivial intersection with $H$, and thus $G$ is a semidirect
product of $H$ and $F$.
\end{proof}

We remark that one cannot drop the assumption about the absence of elements
of order~$9$ in Lemma~\ref{lemma:is-semidirect}. Indeed, the group
$\mumu_9$ contains a normal subgroup $\mumu_3$ of index~$3$, but
it is not isomorphic to a (semi)direct product of two copies of $\mumu_3$.
Note also that under a stronger assumption that $H$ does not contain elements
of order $3$ the assertion of Lemma~\ref{lemma:is-semidirect} becomes
a particular case of a much more general Schur--Zassenhaus theorem,
see e.g.~\mbox{\cite[Theorem~3.5]{Isaacs}}.

\section{Central simple algebras}
\label{section:preliminaries}

In this section we gather auxiliary facts concerning Severi--Brauer surfaces,
and also some assertions from Galois theory.

We refer the reader to \cite{Artin} and~\cite{Kollar-SB} for the basic theory of Severi--Brauer surfaces and higher-dimensional Severi--Brauer varieties.
The foundational fact of this theory is that Severi--Brauer varieties of dimension $m$ over a field $\KK$  are in one-to-one correspondence with central simple algebras of dimension~$(m+1)^2$ over $\KK$,
with $\PP^m$ corresponding to the algebra of $(m+1)\times (m+1)$-matrices.
A Severi--Brauer variety over $\KK$
is trivial if and only if it has a $\KK$-point.

Given an algebra $A$, we denote by $A^*$ the multiplicative group of its invertible elements.
The following characterization of the automorphism group of a Severi--Brauer variety is well-known,
see Theorem~E on page~266
of~\cite{Chatelet}, or~\mbox{\cite[Lemma~4.1]{SV}}.

\begin{lemma}\label{lemma:SB-Aut}
Let $X$ be a Severi--Brauer variety over a field $\KK$ corresponding to a central simple algebra $A$.
Then $\Aut(X)\cong A^*(\KK)/\KK^*$.
\end{lemma}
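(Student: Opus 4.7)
The plan is to reduce to the classical identification $\Aut(\PP^m_{\bar{\KK}}) = \PGL_{m+1}(\bar{\KK})$ over the algebraic closure and then descend to $\KK$ via Galois cohomology. Write $m$ for the dimension of $X$, so that $\dim_\KK A = (m+1)^2$.

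First, I would establish a canonical, Galois-equivariant isomorphism
\begin{equation*}
\Aut(X_{\bar{\KK}}) \cong (A \otimes_\KK \bar{\KK})^*/\bar{\KK}^*.
\end{equation*}
For this one uses an intrinsic description of $X$ in terms of $A$ (for instance, as the variety parametrizing right ideals of $A$ of reduced rank $m+1$) together with the tautological left-multiplication action of $A^*$. After base change to $\bar{\KK}$, and fixing any splitting $A \otimes_\KK \bar{\KK} \cong M_{m+1}(\bar{\KK})$, the right ideals of reduced rank $m+1$ become the kernels of rank-one projections, which recovers $\PP^m_{\bar{\KK}}$; the left-multiplication action of $A^*$ becomes the standard action of $\GL_{m+1}(\bar{\KK})$, and the classical description $\Aut(\PP^m_{\bar{\KK}}) = \PGL_{m+1}(\bar{\KK})$ gives the stated identification.

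The second step is to take $\Gal(\bar{\KK}/\KK)$-invariants. Since $X$ is a $\KK$-form of $\PP^m$, one has $\Aut(X) = \Aut(X_{\bar{\KK}})^{\Gal(\bar{\KK}/\KK)}$. Applying Galois cohomology to the short exact sequence of Galois modules
\begin{equation*}
1 \to \bar{\KK}^* \to (A \otimes_\KK \bar{\KK})^* \to (A \otimes_\KK \bar{\KK})^*/\bar{\KK}^* \to 1
\end{equation*}
yields the exact sequence
\begin{equation*}
1 \to \KK^* \to A^*(\KK) \to \Aut(X) \to H^1(\Gal(\bar{\KK}/\KK), \bar{\KK}^*),
\end{equation*}
whose right-most term vanishes by Hilbert's Theorem~90. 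Hence $A^*(\KK) \to \Aut(X)$ is surjective with kernel $\KK^*$, giving the claimed isomorphism $\Aut(X) \cong A^*(\KK)/\KK^*$.

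The main obstacle lies in the first step: any concrete splitting $A \otimes_\KK \bar{\KK} \cong M_{m+1}(\bar{\KK})$ fails to be Galois-equivariant in general, so one cannot just transfer the identification $\Aut(\PP^m) = \PGL_{m+1}$ along a fixed trivialization. The remedy is to work intrinsically with $A$ itself and its action on the scheme of right ideals, so that both sides acquire their $\KK$-structures canonically from $A$ and compatibility with base change is automatic. Once this canonical identification is in place, the cohomological descent via Hilbert~90 is essentially formal.
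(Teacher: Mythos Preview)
The paper does not actually prove this lemma: it is stated as well-known and attributed to Ch\^{a}telet (Theorem~E on p.~266) and to \cite[Lemma~4.1]{SV}, with no argument given in the body of the paper. So there is no in-paper proof to compare against.

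Your proposal is a correct and standard way to establish the result. The two-step strategy---first realize $X$ intrinsically as the scheme of right ideals of a fixed reduced dimension in $A$ so that the left-multiplication action of $A^*$ is defined over $\KK$ and base-changes to the tautological $\GL_{m+1}$-action after splitting, then descend the identification $\Aut(X_{\bar\KK})\cong (A\otimes\bar\KK)^*/\bar\KK^*$ via Hilbert~90---is exactly the usual cohomological proof. Your diagnosis of the subtle point (that a chosen splitting $A\otimes\bar\KK\cong M_{m+1}(\bar\KK)$ is not Galois-equivariant, so one must work with the canonical $\KK$-structure coming from $A$ itself) is accurate, and your remedy is the right one. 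One minor slip: the points of $X$ correspond to right ideals of \emph{minimal} reduced dimension (reduced dimension~$1$, i.e.\ $\KK$-dimension $m+1$), not reduced rank $m+1$; but this does not affect the argument. If one wants the statement over an arbitrary (not necessarily perfect) field, the same argument goes through with the separable closure and \'etale cohomology in place of $\bar\KK$ and Galois cohomology.
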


Let $\LL/\KK$ be a Galois extension with Galois group
isomorphic to $\mumu_3$. Choose an element~\mbox{$a\in\KK^*$}
and a generator $\sigma$ of $\Gal\left(\LL/\KK\right)$.
Then one can associate with $\sigma$ and $a$
a \emph{cyclic algebra} $A$, which is a central simple algebra over $\KK$,
see \cite[\S2.5]{GilleSzamuely}
or~\mbox{\cite[Exercise~3.1.6]{GorchinskyShramov}}.
Explicitly, $A$ is generated over $\KK$ by $\LL$ and an element $\alpha$
subject to relations~\mbox{$\alpha^3=a$} and
\begin{equation*}
\lambda \alpha=\alpha\sigma(\lambda), \quad \lambda\in\LL.
\end{equation*}

\begin{lemma}[{see e.g. \cite[Exercise~3.1.6(i)]{GorchinskyShramov}}]
\label{lemma:not-matrix}
Suppose that in the above notation the element~$a$ is not contained in the image of the Galois norm $N_{\LL/\KK}$.
Then $A$ is not isomorphic to a matrix algebra.
\end{lemma}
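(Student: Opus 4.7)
The plan is to prove the contrapositive: assuming that $A$ is isomorphic to the matrix algebra $M_3(\KK)$, I would deduce that $a\in N_{\LL/\KK}(\LL^*)$, contradicting the hypothesis.

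First I would let $V$ denote the standard simple $A$-module arising from the isomorphism $A\cong M_3(\KK)$, so that $V$ is a $3$-dimensional $\KK$-vector space on which $A$ acts faithfully. Restricting the action along the embedding $\LL\hookrightarrow A$ makes $V$ into a faithful module over the field $\LL$ (faithful because $1\in\LL$ acts as the identity on $V$, and the kernel of an $\LL$-module structure is a proper ideal of the field $\LL$, hence zero). Since $\dim_\KK V=3=[\LL:\KK]$, the space $V$ is necessarily one-dimensional over $\LL$, and I would fix an $\LL$-basis vector $v_0\in V$, so that every element of $V$ has the unique form $\lambda v_0$ with $\lambda\in\LL$.

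Next I would analyse how $\alpha$ acts on $v_0$. Since $\alpha^3=a\in\KK^*$, the element $\alpha$ is invertible in $A$ and thus acts invertibly on $V$, so the element $\mu\in\LL$ defined by $\alpha\cdot v_0=\mu v_0$ lies in $\LL^*$. Substituting $\lambda\mapsto\sigma^{-1}(\lambda)$ in the defining relation $\lambda\alpha=\alpha\sigma(\lambda)$ yields the equivalent form $\alpha\cdot\lambda=\sigma^{-1}(\lambda)\cdot\alpha$ in $A$, and then a short induction on $k$ gives
\[
\alpha^k\cdot v_0 \;=\; \mu\cdot\sigma^{-1}(\mu)\cdots\sigma^{-(k-1)}(\mu)\cdot v_0.
\]
Setting $k=3$ and using that $\sigma$ has order $3$, so that $\sigma^{-1}=\sigma^{2}$ and $\sigma^{-2}=\sigma$, I would obtain $\alpha^3\cdot v_0 = \mu\cdot\sigma(\mu)\cdot\sigma^2(\mu)\cdot v_0 = N_{\LL/\KK}(\mu)\cdot v_0$.

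Finally, because $\alpha^3=a$ acts on $V$ simply as multiplication by the scalar $a\in\KK^*$, comparing with the previous identity yields $a=N_{\LL/\KK}(\mu)$, which is the desired contradiction. The only real conceptual point is the opening observation that a matrix-algebra structure on $A$ forces the degree-$3$ subfield $\LL$ to appear as a maximal commutative subalgebra acting on a one-dimensional $\LL$-space; once this is in hand, the twisted-commutativity relation between $\alpha$ and $\LL$ turns the condition $\alpha^3=a$ into the statement that $a$ is a Galois norm essentially by a one-line calculation.
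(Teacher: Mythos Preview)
Your argument is correct. The paper itself does not supply a proof of this lemma; it merely records the statement with a reference to \cite[Exercise~3.1.6(i)]{GorchinskyShramov}. What you have written is precisely the standard argument one finds for this fact: realise the simple module $V$ of $A\cong M_3(\KK)$ as a one-dimensional $\LL$-vector space, and then use the twisted commutation relation $\alpha\lambda=\sigma^{-1}(\lambda)\alpha$ to compute $\alpha^3\cdot v_0=N_{\LL/\KK}(\mu)\,v_0$. The induction in your step~$\alpha^k\cdot v_0=\mu\,\sigma^{-1}(\mu)\cdots\sigma^{-(k-1)}(\mu)\,v_0$ is verified exactly as you indicate, and the identification $\mu\,\sigma^{-1}(\mu)\,\sigma^{-2}(\mu)=N_{\LL/\KK}(\mu)$ is immediate since $\sigma$ has order~$3$. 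There is nothing to add or correct.
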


It appears that the above construction is responsible for all possible
central simple algebras of dimension $9$ over an arbitrary field $\KK$
(with a trivial exception of the situation when $\KK$ has
no cyclic extensions of degree $3$, in which case there are no
central simple algebras of dimension $9$ over $\KK$ apart from the matrix
algebra anyway). Namely, the following result
can be found in~\cite{Wedderburn}
or~\mbox{\cite[Chapter~7, Exercise~9]{GilleSzamuely}}.

\begin{lemma}\label{lemma:all-cyclic}
Let $A$ be a central simple algebra of dimension $9$ over a field $\KK$.
Suppose that $A$ is not isomorphic to the matrix algebra.
Then~$A$ is a cyclic algebra constructed from some cyclic Galois
extension~$\LL/\KK$ and some element~\mbox{$a\in\KK^*$}.
\end{lemma}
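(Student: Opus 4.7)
The plan is to combine Wedderburn's structure theorem with a Skolem–Noether argument. First, I would note that a central simple $\KK$-algebra of dimension $9$ is, by Wedderburn's structure theorem, isomorphic to $M_r(D)$ for some central division $\KK$-algebra $D$. Comparing dimensions leaves only two options: either $r=3$ and $D=\KK$, which is the matrix algebra excluded by hypothesis, or $r=1$ and $A=D$. Thus we may assume that $A$ is a central division algebra of degree $3$ over $\KK$.

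The next and principal step is to exhibit inside $A$ a cubic cyclic Galois extension $\LL$ of $\KK$. Pick any element $d\in A\setminus\KK$; since $A$ is a division algebra of degree $3$ and $d$ satisfies the reduced characteristic polynomial, the subfield $\KK(d)$ has degree exactly $3$ over $\KK$, so the minimal polynomial $f(t)$ of $d$ is an irreducible cubic. Its Galois group is a transitive subgroup of $S_3$, hence either $\mumu_3$, in which case $\LL=\KK(d)$ works, or $S_3$. The latter is the serious case: one has to show that $A$ nevertheless contains a $\mumu_3$-Galois cubic subfield. The classical Wedderburn approach handles this by base change to $\KK(\sqrt{\Delta})$, where $\Delta$ is the discriminant of $f$; since $\gcd(2,3)=1$ the algebra $A\otimes_{\KK}\KK(\sqrt{\Delta})$ remains a division algebra of degree $3$, over which $\KK(\sqrt{\Delta})(d)$ is a cyclic cubic subfield, and a Galois descent along $\Gal(\KK(\sqrt{\Delta})/\KK)$ then produces a cyclic cubic subfield defined over $\KK$ itself. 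This descent is the heart of the proof and its main obstacle, and I would invoke it by appeal to the classical Wedderburn theorem rather than reprove it from scratch.

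Once such a cyclic Galois extension $\LL\subset A$ with a chosen generator $\sigma\in\Gal(\LL/\KK)$ is in hand, I would invoke the Skolem–Noether theorem to produce an element $\alpha\in A^*$ conjugating $\LL$ by $\sigma^{-1}$, so that $\lambda\alpha=\alpha\sigma(\lambda)$ for every $\lambda\in\LL$. Then $\alpha^3$ commutes with all of $\LL$, and because $\LL\subset A$ is a maximal subfield the double centralizer theorem forces $\alpha^3\in\LL$. Since moreover $\sigma(\alpha^3)=\alpha\cdot\alpha^3\cdot\alpha^{-1}=\alpha^3$, the element $\alpha^3$ is $\sigma$-invariant and hence lies in $\KK$; set $a=\alpha^3\in\KK^*$. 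Counting dimensions shows that the subalgebra of $A$ generated by $\LL$ and $\alpha$, which satisfies the defining relations of the cyclic algebra attached to $\LL/\KK$, $\sigma$ and $a$, already has dimension $9$ and therefore coincides with $A$. This exhibits $A$ as the desired cyclic algebra.
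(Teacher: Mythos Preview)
The paper does not prove this lemma at all; it records the statement as classical and cites Wedderburn's original paper and an exercise in Gille--Szamuely. Your write-up therefore goes beyond what the paper offers, and your first and third steps are entirely correct: the reduction to a division algebra is exactly the dimension count in Wedderburn's structure theorem, and once a cyclic cubic subfield $\LL\subset A$ is in hand, the Skolem--Noether argument you give (producing $\alpha$, showing $\alpha^3$ centralises $\LL$ and is $\sigma$-fixed, then matching dimensions) is clean and complete.

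The only place to flag is your second step. You correctly isolate the crux --- exhibiting a cyclic cubic subfield inside a degree-$3$ division algebra --- and the observation that $A\otimes_{\KK}\KK(\sqrt{\Delta})$ stays a division algebra is fine, since a quadratic extension cannot split a Brauer class of period $3$. But ``Galois descent along $\Gal(\KK(\sqrt{\Delta})/\KK)$'' does not by itself produce a cyclic cubic subfield of $A$ over $\KK$: the subfield $\KK(\sqrt{\Delta})(d)\subset A\otimes_{\KK}\KK(\sqrt{\Delta})$ need not be stable under the Galois involution, so there is no evident form of it over $\KK$. Wedderburn's actual argument at this point is an explicit construction of a new element, not an abstract descent. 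More to the point, when you then ``invoke the classical Wedderburn theorem'' for this step you are invoking precisely the statement of the lemma. That is candid, and it matches the paper's own choice to defer to the literature, but it does mean your middle step is a reference rather than an argument.
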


The next fact is immediately implied by the theorem of Wedderburn  about the structure of central simple algebras, see~\cite[Theorem~2.1.3]{GilleSzamuely}.

\begin{lemma}\label{lemma:matrix-or-division}
Let $A$ be a central simple algebra of dimension $q^2$ over a field $\KK$,
where~$q$ is a prime number.
Then $A$ is either the matrix algebra, or a division algebra.
\end{lemma}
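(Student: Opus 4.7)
The plan is to deduce this directly from Wedderburn's structure theorem, which is cited in the statement itself. By that theorem, every central simple algebra $A$ over $\KK$ is isomorphic to a matrix algebra $M_n(D)$ over a central division algebra $D/\KK$, with $n$ and $D$ uniquely determined by $A$. I would begin the proof by invoking this decomposition and writing $A\cong M_n(D)$ for some such $n$ and $D$.

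Next I would compute dimensions. Since $D$ is itself a central simple algebra over $\KK$, its dimension over~$\KK$ is a perfect square, say $\dim_\KK D=d^2$, where $d$ is the Schur index of~$D$. Then $\dim_\KK A=n^2 d^2=(nd)^2$. Combining this with the hypothesis $\dim_\KK A=q^2$ gives $nd=q$.

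The final step uses the primality of $q$: the only factorizations $nd=q$ with $n,d\in\ZZ_{\ge 1}$ are $(n,d)=(1,q)$ and $(n,d)=(q,1)$. In the first case $A\cong D$ is a central division algebra over~$\KK$, and in the second case $D=\KK$, so $A\cong M_q(\KK)$ is the matrix algebra of rank~$q$ over~$\KK$. Either way the conclusion of the lemma holds.

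There is essentially no main obstacle here, as the entire content is packaged inside Wedderburn's theorem plus the fact that the dimension of a central division algebra over its center is a perfect square; both are standard and already referenced in the paper. The only care required is to make explicit that the dimension of $D$ over $\KK$ is a square (so that the factorization $n^2d^2=q^2$ forces $nd=q$ rather than merely $n^2d^2=q^2$ with $d^2$ not obviously a square), which is where the primality of $q$ is used decisively.
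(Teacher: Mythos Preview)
Your proposal is correct and follows exactly the approach the paper indicates: the paper does not give a proof but simply states that the lemma is immediately implied by Wedderburn's structure theorem, and your write-up spells out precisely that implication via the factorization $nd=q$.
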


We will also need the following facts from Galois theory.

\begin{theorem}[{\cite[\S\,III.1]{CasselsFrolich}}]
\label{theorem:cyclotomic}
Let $n$ be a positive integer, and let $\xi$ be
a primitive $n$-th root of unity.
Then $\QQ(\xi)/\QQ$ is a Galois extension, and
$$
\Gal\left(\QQ(\xi)/\QQ\right)\cong\mumu_n^*.
$$
\end{theorem}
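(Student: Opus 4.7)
The plan is to exhibit a natural injective homomorphism $\rho\colon \Gal(\QQ(\xi)/\QQ)\to \mumu_n^*$ and then prove surjectivity via irreducibility of the cyclotomic polynomial.

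First, I would observe that $\QQ(\xi)/\QQ$ is Galois. Since $\xi$ is a primitive $n$-th root of unity, every $n$-th root of unity is of the form $\xi^k$ for some $k$, so $\QQ(\xi)$ is the splitting field over $\QQ$ of the separable polynomial $x^n-1$; this makes the extension normal and separable, hence Galois. Next, every $\sigma\in\Gal(\QQ(\xi)/\QQ)$ must permute the roots of $x^n-1$, and since $\sigma$ is a field automorphism it preserves multiplicative orders, so $\sigma(\xi)$ is again a primitive $n$-th root of unity. Therefore $\sigma(\xi)=\xi^{k(\sigma)}$ for a unique residue $k(\sigma)\in\mumu_n^*$, and the map
\[
\rho\colon \Gal(\QQ(\xi)/\QQ)\longrightarrow \mumu_n^*, \qquad \sigma\mapsto k(\sigma),
\]
is a homomorphism because $\sigma\tau(\xi)=\sigma(\xi^{k(\tau)})=\xi^{k(\sigma)k(\tau)}$. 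It is injective because $\QQ(\xi)$ is generated over $\QQ$ by $\xi$, so $\sigma$ is determined by $\sigma(\xi)$.

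The nontrivial point is surjectivity, which amounts to the statement $[\QQ(\xi):\QQ]=\varphi(n)=|\mumu_n^*|$, i.e.\ that the cyclotomic polynomial
\[
\Phi_n(x)=\prod_{\substack{1\le k\le n\\ \gcd(k,n)=1}}(x-\xi^k)
\]
is irreducible over $\QQ$. This is the main obstacle. The standard strategy I would follow is due to Dedekind: let $f(x)\in\ZZ[x]$ be the monic minimal polynomial of $\xi$ over $\QQ$ (an integer polynomial by Gauss's lemma, since $\xi$ is an algebraic integer), and write $x^n-1=f(x)g(x)$ with $g(x)\in\ZZ[x]$. I would show that for every prime $p$ not dividing $n$ and every root $\zeta$ of $f$, the element $\zeta^p$ is again a root of $f$.

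The key step is to assume toward contradiction that $\zeta^p$ is a root of $g$, so $\zeta$ is a root of $g(x^p)$; then $f(x)$ divides $g(x^p)$ in $\ZZ[x]$. Reducing modulo $p$, one obtains $\bar f(x)\mid \bar g(x)^p$ in $\mathbb{F}_p[x]$, so $\bar f$ and $\bar g$ share a common irreducible factor. Consequently $\bar f\bar g = x^n-1$ has a multiple root in $\overline{\mathbb{F}_p}$, which contradicts the fact that $x^n-1$ is separable over $\mathbb{F}_p$ (its derivative $nx^{n-1}$ is nonzero because $p\nmid n$). Having established that $\zeta^p$ is a root of $f$ for each prime $p\nmid n$, I can iterate: for any $k\in\mumu_n^*$, writing $k$ as a product of primes not dividing $n$, I conclude that $\xi^k$ is also a root of $f$. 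Hence all primitive $n$-th roots of unity are roots of $f$, forcing $\deg f\ge \varphi(n)$ and therefore $f=\Phi_n$. This gives $|\Gal(\QQ(\xi)/\QQ)|=\varphi(n)=|\mumu_n^*|$, so the injection $\rho$ is an isomorphism, completing the proof.
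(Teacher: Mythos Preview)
Your proof is correct and follows the classical Dedekind argument for the irreducibility of the cyclotomic polynomial. Note, however, that the paper does not actually prove this theorem: it is stated with a reference to \cite[\S\,III.1]{CasselsFrolich} and used as a standard input from Galois theory. So there is no ``paper's own proof'' to compare against---you have supplied a full, self-contained argument for a result the paper simply cites.
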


\begin{lemma}\label{lemma:different-xis}
Let $\xi_1$ and $\xi_2$ be primitive roots of unity of
degrees $n_1$ and $n_2$, respectively.
Suppose that $n_1$ and $n_2$ are coprime, and set
$\MM=\QQ(\xi_1)$. Then $\MM(\xi_2)/\MM$ is a Galois extension, and
$$
\Gal\left(\MM(\xi_2)/\MM\right)\cong\mumu_{n_2}^*.
$$
\end{lemma}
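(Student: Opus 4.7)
The plan is to realize $\MM(\xi_2)$ as a subfield of a single cyclotomic extension of $\QQ$, and then deduce the Galois-theoretic statement from Theorem~\ref{theorem:cyclotomic} applied to a primitive $n_1n_2$-th root of unity.

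First I would note that, by Theorem~\ref{theorem:cyclotomic}, $\QQ(\xi_2)/\QQ$ is Galois with Galois group isomorphic to $\mumu_{n_2}^*$. Since $\MM(\xi_2)$ is the compositum $\MM\cdot\QQ(\xi_2)$ inside a fixed algebraic closure, $\MM(\xi_2)/\MM$ is automatically Galois, and restriction to $\QQ(\xi_2)$ yields an injective homomorphism
$$
\Gal\bigl(\MM(\xi_2)/\MM\bigr)\hookrightarrow \Gal\bigl(\QQ(\xi_2)/\QQ\bigr)\cong \mumu_{n_2}^*.
$$
Thus everything boils down to showing that this map is surjective, i.e. that $[\MM(\xi_2):\MM]=\varphi(n_2)$, or equivalently $\MM\cap \QQ(\xi_2)=\QQ$.

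The main step is a degree count, which is where the coprimality assumption enters. Let $\xi$ be a primitive $n_1n_2$-th root of unity. Since $\gcd(n_1,n_2)=1$, one can take $\xi=\xi_1\xi_2$, so $\QQ(\xi_1,\xi_2)\subseteq\QQ(\xi)$; conversely both $\xi_1$ and $\xi_2$ are powers of $\xi$ of appropriate order, whence $\QQ(\xi)\subseteq\QQ(\xi_1,\xi_2)$. Therefore $\QQ(\xi_1,\xi_2)=\QQ(\xi)$, and Theorem~\ref{theorem:cyclotomic} gives
$$
[\QQ(\xi_1,\xi_2):\QQ]=\varphi(n_1n_2)=\varphi(n_1)\varphi(n_2),
$$
the last equality again using coprimality of $n_1$ and $n_2$.

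Combining this with $[\MM:\QQ]=\varphi(n_1)$ from Theorem~\ref{theorem:cyclotomic}, the tower formula gives
$$
[\MM(\xi_2):\MM]=\frac{[\QQ(\xi_1,\xi_2):\QQ]}{[\MM:\QQ]}=\varphi(n_2)=|\mumu_{n_2}^*|,
$$
so the injection above is an isomorphism. The only non-routine point is the identification $\QQ(\xi_1,\xi_2)=\QQ(\xi_1\xi_2)$ together with the multiplicativity of $\varphi$; both rely on $\gcd(n_1,n_2)=1$, and this is really the only place where that hypothesis is used.
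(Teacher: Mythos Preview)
Your proof is correct and follows essentially the same route as the paper: both use that $\MM(\xi_2)=\MM\cdot\QQ(\xi_2)$ is Galois over $\MM$ with restriction injecting $\Gal(\MM(\xi_2)/\MM)$ into $\Gal(\QQ(\xi_2)/\QQ)\cong\mumu_{n_2}^*$, and then a degree argument to conclude. The only cosmetic difference is that the paper cites $\QQ(\xi_1)\cap\QQ(\xi_2)=\QQ$ from Lang and invokes the standard compositum theorem, whereas you obtain the same degree count by identifying $\QQ(\xi_1,\xi_2)=\QQ(\xi_1\xi_2)$ and applying Theorem~\ref{theorem:cyclotomic} to $n_1n_2$.
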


\begin{proof}
Note that $\QQ(\xi_1)\cap\QQ(\xi_2)=\QQ$, see for instance
\cite[\S\,VIII.3]{Lang}.
Also, we know from Theorem~\ref{theorem:cyclotomic} that
$\QQ(\xi_2)/\QQ$ is a Galois extension, and
$$
\Gal\left(\QQ(\xi_2)/\QQ\right)\cong\mumu_{n_2}^*.
$$
Thus the assertion follows from \cite[Theorem~VIII.1.4]{Lang}.
\end{proof}

The next result is well known to experts; it follows for instance from \cite[Theorem~1(b)]{Stern}.

\begin{lemma}
\label{lemma:norms}
Let $\LL/\KK$ be a finite Galois extension of number fields.
Then the Galois norm map $N_{\LL/\KK}\colon \LL^*\to\KK^*$ is not surjective.
\end{lemma}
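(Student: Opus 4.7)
My plan is to exhibit an element of $\KK^*$ that is not a Galois norm by combining Chebotarev's density theorem with a local valuation estimate. The statement is vacuous when $\LL=\KK$, so I tacitly assume $\LL\neq\KK$; then $G=\Gal(\LL/\KK)$ is a nontrivial finite group, and I may pick any nontrivial cyclic subgroup $C\le G$, of some order $f>1$.

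First, I would invoke Chebotarev's density theorem to produce a prime $\mathfrak{p}$ of $\KK$ which is unramified in $\LL$ and whose Frobenius generates $C$; then every prime $\mathfrak{P}$ of $\LL$ above $\mathfrak{p}$ has residue degree $f$ and ramification index $1$. Since the localisation $\OOO_{\KK,\mathfrak{p}}$ is a discrete valuation ring, I can then choose $\alpha\in\KK^*$ with $v_\mathfrak{p}(\alpha)=1$.

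Finally, I would argue that $\alpha\notin N_{\LL/\KK}(\LL^*)$. Assume for contradiction that $\alpha=N_{\LL/\KK}(\beta)$ for some $\beta\in\LL^*$; the standard factorisation of a global norm into local ones at a given prime yields
\[
\alpha=\prod_{\mathfrak{P}\mid\mathfrak{p}} N_{\LL_{\mathfrak{P}}/\KK_{\mathfrak{p}}}(\beta)
\]
in $\KK_\mathfrak{p}^*$, where on the right $\beta$ denotes its image in each completion $\LL_{\mathfrak{P}}$. Each local extension $\LL_{\mathfrak{P}}/\KK_{\mathfrak{p}}$ is unramified of degree $f$, so $v_\mathfrak{p}(N_{\LL_{\mathfrak{P}}/\KK_{\mathfrak{p}}}(\beta))=f\cdot v_{\mathfrak{P}}(\beta)$ is divisible by $f$, and therefore so is $v_\mathfrak{p}(\alpha)$. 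This contradicts $v_\mathfrak{p}(\alpha)=1$. The only nontrivial ingredient here is Chebotarev's theorem, which guarantees that every cyclic subgroup of $G$ arises as the decomposition group of a positive-density set of unramified primes of $\KK$; the remaining valuation bookkeeping is elementary.
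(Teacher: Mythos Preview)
Your argument is correct. The paper does not supply its own proof of this lemma: it simply records that the result is well known and cites \cite[Theorem~1(b)]{Stern}. By contrast, you give a direct, self-contained argument: Chebotarev furnishes an unramified prime $\mathfrak{p}$ of $\KK$ whose residue degree $f$ in $\LL$ exceeds~$1$, and then any $\alpha\in\KK^*$ with $v_{\mathfrak{p}}(\alpha)=1$ cannot be a global norm, since the $\mathfrak{p}$-valuation of every element of $N_{\LL/\KK}(\LL^*)$ is divisible by~$f$. This is the standard elementary route, and in fact produces infinitely many primes each obstructing surjectivity. Stern's theorem provides finer structural information about the norm group, but for the way the lemma is used in the paper your valuation argument is shorter and entirely adequate. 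One minor remark: the case $\LL=\KK$ is not ``vacuous'' but genuinely excluded (the norm is then the identity and is surjective); since the paper only ever applies the lemma to nontrivial cyclic extensions of degree~$3$, this causes no trouble.
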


\section{Construction of subgroups}
\label{section:Gp}

In this section we construct examples of finite groups acting on non-trivial
Severi--Brauer surfaces.
It is easy to produce a non-trivial Severi--Brauer surface
with an action of the group~$\mumu_3^2$.

\begin{example}[{see \cite[Example~4.7]{Shramov-SB}}]
\label{example:cyclic-algebra}
Let $\KK$ be a field of characteristic different from~$3$
that contains a non-trivial
cubic root of unity $\omega$. Let $a$ and $b$ be elements of $\KK$
such that~$b$ is not
a cube in $\KK$, and~$a$ is not contained in
the image of the Galois norm for the
field extension~$\LL/\KK$, where
$$
\LL=\KK\left(\sqrt[3]{b}\right).
$$
If $\KK$ is a number
field, then an element $b$ like this always exists for obvious reasons, and
an element $a$ exists by Lemma~\ref{lemma:norms}.
Consider the algebra~$A$ over~$\KK$ generated by variables~$u$ and~$v$
subject to relations
$$
u^3=a,\quad v^3=b,\quad uv=\omega vu.
$$
Then $A$ is a central simple algebra, and moreover a division algebra;
actually, one can construct it as a cyclic algebra over $\KK$
associated with (a generator of the Galois group~of)
the cyclic extension $\LL/\KK$ and the element~$a$.
Thus $A$ corresponds to a non-trivial Severi--Brauer
surface $S$.
One can see that $u$ and $v$
generate a non-abelian subgroup of order $27$ and exponent $3$
in $A^*$ (sometimes called the Heisenberg group).
Their images in~\mbox{$A^*/\KK^*$} generate a group isomorphic to $\mumu_3^2$,
which is embedded as a subgroup into $\Aut(S)$ by Lemma~\ref{lemma:SB-Aut}.
\end{example}

\begin{remark}\label{remark:always-3}
Let $S$ be a Severi--Brauer surface over a field $\KK$.
Then~$S$ has an automorphism of order $3$. Indeed, if $S\cong\PP^2$,
then this is obvious. Otherwise it follows from Lemma~\ref{lemma:all-cyclic}
that $S$ corresponds to some cyclic algebra. In the notation
of~\S\ref{section:preliminaries}, the algebra $A$ contains the element
$\alpha\not\in\KK$
such that $\alpha^3\in\KK$. Thus $\alpha$ defines an automorphism
of $S$ of order $3$ by Lemma~\ref{lemma:SB-Aut}.
\end{remark}

The proof of the next result
is similar to that of Theorem~\ref{theorem:attained-p}, see
\cite{Shramov-SB-short}.

\begin{lemma}\label{lemma:attained-n}
Let $n>1$ be an integer divisible only by primes congruent to $1$
modulo~$3$. Let $G\cong\mumu_n\rtimes\mumu_3$ be
a balanced semidirect product.
Then
there exists a number field $\KK$ and a non-trivial Severi--Brauer
surface $S$ over $\KK$ such that
the group $\Aut(S)$ contains~$G$.
\end{lemma}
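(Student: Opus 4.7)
The plan is to construct a cyclic division algebra~$A$ of dimension~$9$ over a number field~$\KK$ so that $G$ embeds into $\Aut(S)\cong A^*(\KK)/\KK^*$ for the associated non-trivial Severi--Brauer surface~$S$, mimicking the prime case of Theorem~\ref{theorem:attained-p}. Take $\xi$ to be a primitive $n$-th root of unity and set $\LL=\QQ(\xi)$; Theorem~\ref{theorem:cyclotomic} identifies $\Gal(\LL/\QQ)$ with $\mumu_n^*$. Let $\chi\colon\mumu_3\to\mumu_n^*$ be the homomorphism defining the balanced semidirect product~$G$, and put $d=\chi(1)$; then $d$ has order~$3$, and under the identification above it corresponds to an element $\sigma\in\Gal(\LL/\QQ)$ with $\sigma(\xi)=\xi^d$. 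Set $\KK=\LL^{\langle\sigma\rangle}$, so that $\LL/\KK$ is a cyclic Galois extension of number fields of degree~$3$.

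Next, use Lemma~\ref{lemma:norms} to pick $a\in\KK^*\setminus N_{\LL/\KK}(\LL^*)$, and build the cyclic algebra~$A$ over~$\KK$ generated by $\LL$ and an element~$\alpha$ satisfying $\alpha^3=a$ and $\lambda\alpha=\alpha\sigma(\lambda)$ for $\lambda\in\LL$. Lemmas~\ref{lemma:not-matrix} and~\ref{lemma:matrix-or-division} together show that $A$ is a division algebra, so the associated Severi--Brauer surface~$S$ is non-trivial and $\Aut(S)\cong A^*/\KK^*$ by Lemma~\ref{lemma:SB-Aut}. The plan is then to exhibit~$G$ inside $A^*/\KK^*$ as the subgroup generated by the images $\bar\xi$ and~$\bar\alpha$. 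The defining relation of the cyclic algebra yields $\alpha^{-1}\xi\alpha=\xi^d$; since $d\ne 1$ the element~$\alpha$ does not commute with~$\xi$ and therefore lies outside the center $\KK$ of~$A$, so $\bar\alpha$ has order~$3$. Provided $\bar\xi$ has order exactly~$n$, the coprimality of $n$ and~$3$ makes $\langle\bar\xi\rangle\cap\langle\bar\alpha\rangle$ trivial, and Lemma~\ref{lemma:semi-direct-product-relations} produces $\langle\bar\xi,\bar\alpha\rangle\cong\mumu_n\rtimes\mumu_3\cong G$, as required.

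The main obstacle is this last verification of the order of~$\bar\xi$, which is where the balanced hypothesis enters in an essential way. An integer~$k$ satisfies $\xi^k\in\KK$ iff $\sigma(\xi^k)=\xi^k$, iff $n\mid k(d-1)$, so one needs $\gcd(d-1,p_i)=1$ for every prime divisor~$p_i$ of~$n$. The balanced condition gives directly only the weaker $d\not\equiv 1\pmod{p_i^{r_i}}$; upgrading to $d\not\equiv 1\pmod{p_i}$ requires the same $p_i$-adic calculation as at the end of the proof of Lemma~\ref{lemma:non-balanced-structure}. Namely, if $d\equiv 1+cp_i^s\pmod{p_i^{s+1}}$ with $\gcd(c,p_i)=1$ and $1\le s<r_i$, then $d^3\equiv 1+3cp_i^s\pmod{p_i^{s+1}}$, and since $p_i\ne 3$ this contradicts $d^3\equiv 1\pmod{p_i^{r_i}}$. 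Hence $d\equiv 1\pmod{p_i}$ would force $d\equiv 1\pmod{p_i^{r_i}}$, violating balanced, and the construction is complete.
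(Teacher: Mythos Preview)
Your proof is correct and follows essentially the same construction as the paper's: the cyclotomic field $\LL=\QQ(\xi)$, its $\mumu_3$-invariant subfield $\KK$, a cyclic algebra built from an element outside the norm image, and the identification of $\langle\bar\xi,\bar\alpha\rangle$ with $G$ via Lemma~\ref{lemma:semi-direct-product-relations}. The only cosmetic difference is that where you redo the $p$-adic computation inline to show $d\not\equiv 1\pmod{p_i}$, the paper simply invokes the statement of Lemma~\ref{lemma:non-balanced-structure} (balanced $\Leftrightarrow$ no nontrivial element of $\mumu_n$ commutes with $\mumu_3$) to conclude directly that $\sigma(\xi^m)\neq\xi^m$ for $1\le m<n$.
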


\begin{proof}
Let $\xi$ denote a primitive root of unity
of degree~$n$, and let $\LL=\QQ(\xi)$.
Then~\mbox{$\LL/\QQ$} is a Galois extension with
the Galois group isomorphic to $\mumu_n^*$ by Theorem~\ref{theorem:cyclotomic}.
Let $\chi\colon\mumu_3\to \mumu_n^*$
be the homomorphism giving rise to the semidirect product structure on $G$.
Consider $\chi$ as a homomorphism
$$
\chi\colon \mumu_{3}\to\Gal\left(\LL/\QQ\right),
$$
and let
$\KK\subset\LL$ be the subfield of invariants of
the subgroup~\mbox{$\chi(\mumu_3)\subset \Gal\left(\LL/\QQ\right)$}.
Then~$\LL/\KK$ is a Galois extension with
the Galois group~\mbox{$\Gal\left(\LL/\KK\right)$}
isomorphic to~$\mumu_3$.
Let $\sigma$ be a generator of $\Gal\left(\LL/\KK\right)$.
Since $\chi$ is balanced,
we conclude from Lemma~\ref{lemma:non-balanced-structure} that
for all $1\le m<n$ one has $\sigma(\xi^m)\neq\xi^m$;
thus~\mbox{$\xi^m\not\in\KK$}. Explicitly, $\sigma$
sends $\xi\in\LL$ to $\xi^d$,
where~\mbox{$d\in\mumu_n^*$} is the image of a generator
of~$\mumu_3$ under the embedding $\chi\colon\mumu_3\to\mumu_n^*$.
In particular, one has $d^3\equiv 1\pmod n$.

By Lemma~\ref{lemma:norms} there exists an element $a\in\KK$ such that $a$
is not contained in the image of
the Galois norm of the field extension $\LL/\KK$.
Consider the cyclic algebra $A$ over~$\KK$ associated with $\sigma$ and $a$.
We know that $A$ is a central simple algebra. However, $A$ is not a matrix algebra by Lemma~\ref{lemma:not-matrix}.
Since the dimension of $A$ over $\KK$ equals $9$, we conclude that
$A$ is a division algebra by Lemma~\ref{lemma:matrix-or-division}.
The algebra $A$ contains the root of unity~\mbox{$\xi\in\LL$}
and an element~$\alpha$ such that $\alpha^3=a$ and
relation
\begin{equation}\label{eq:relation}
\xi \alpha=\alpha\xi^d
\end{equation}
holds.

Denote by $\hat{\alpha}$ and $\hat{\xi}$ the images in
the quotient group $A^*/\KK^*$ of the elements $\alpha$ and~$\xi$,
respectively, and denote by
$\hat{G}$ the subgroup of $A^*/\KK^*$
generated by $\hat{\alpha}$ and $\hat{\xi}$.
Since $\alpha^3\in\KK$ and $\alpha\not\in \KK$, the order
of $\hat{\alpha}$ in $A^*/\KK^*$ equals~$3$ as well.
Similarly, one has $\xi^n=1\in\KK$, and~\mbox{$\xi^m\not\in\KK$}
for all $1\le m<n$.
Thus the order of  $\hat{\xi}$ in $A^*/\KK^*$  equals $n$.
Since $n$ is coprime to $3$, the cyclic groups generated
by $\hat{\xi}$ and $\hat{\alpha}$ have trivial intersection.
Furthermore, relation~\eqref{eq:relation} implies
$$
\hat{\xi}\hat{\alpha}=\hat{\alpha}\hat{\xi}^d.
$$
Hence $\hat{G}$ is the semidirect product of $\mumu_n$ and $\mumu_3$
corresponding to $\chi$ by Lemma~\ref{lemma:semi-direct-product-relations},
so that $\hat{G}\cong G$.

We see that the elements $\hat{\xi}$ and $\hat{\alpha}$
generate a subgroup isomorphic to $G$ in $A^*/\KK^*$.
On the other hand, for a Severi--Brauer surface $S$ corresponding to
$A$ one has $\Aut(S)\cong A^*/\KK^*$ by Lemma~\ref{lemma:SB-Aut}.
\end{proof}

\begin{remark}\label{remark:not-attained-non-balanced}
If in the notation of the proof of Lemma~\ref{lemma:attained-n}
we choose $\chi\colon \mumu_3\to\Gal\left(\LL/\QQ\right)$
to be a non-balanced homomorphism, and set $\KK$ to be the field of invariants
of the group~\mbox{$\chi(\mumu_3)$},
then for some $1\le m<n$ the element $\xi^m$ is invariant under the action
of the Galois group $\Gal\left(\LL/\KK\right)$. This means that
$\xi^m$ is an element of $\KK$. Therefore, while
the subgroup of $A^*$ generated by $\xi$ and $\alpha$ has a quotient
isomorphic to
the non-balanced semidirect product $\mumu_n\rtimes\mumu_3$ corresponding to
$\chi$, its image in $\Aut(S)\cong A^*/\KK^*$ is isomorphic to a (balanced)
semidirect product $\mumu_{n'}\rtimes\mumu_3$ for some~\mbox{$n'<n$},
cf. Remark~\ref{remark:improved-non-balanced}.
\end{remark}

The following result is proved by a blend of constructions
used in Example~\ref{example:cyclic-algebra}
and Lemma~\ref{lemma:attained-n}.

\begin{lemma}\label{lemma:attained-3n}
Let $n\ge 1$
be an integer divisible only by primes congruent to $1$
modulo~$3$. Let $G'\cong\mumu_{n}\rtimes\mumu_3$ be
a balanced semidirect product, and let $G\cong\mumu_3\times G'$.
Then there exists a number field $\KK$ and a non-trivial Severi--Brauer
surface $S$ over $\KK$ such that
the group $\Aut(S)$ contains~$G$.
\end{lemma}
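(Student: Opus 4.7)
The plan is to fuse the Heisenberg-type construction of Example~\ref{example:cyclic-algebra} with the cyclotomic construction of Lemma~\ref{lemma:attained-n}. The idea is to enlarge the cyclotomic field used in the latter so that a primitive cube root of unity $\omega$ automatically lies in the base field $\KK$; the degree-$3$ cyclic extension $\LL/\KK$ is then Kummer, producing an element $v\in\LL$ with $\sigma(v)=\omega v$ that will supply the extra commuting copy of $\mumu_3$ in $A^*/\KK^*$.

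Concretely, the case $n=1$ is already Example~\ref{example:cyclic-algebra}, where the group produced is $\mumu_3^2\cong G$, so assume $n>1$. Let $\xi$ be a primitive $n$-th root of unity and set $\LL=\QQ(\omega,\xi)$. Since $\gcd(3,n)=1$, Lemma~\ref{lemma:different-xis} identifies $\Gal\left(\LL/\QQ(\omega)\right)$ with $\mumu_n^*$. View the balanced embedding $\chi\colon\mumu_3\to\mumu_n^*$ associated with $G'$ inside this Galois group, and let $\KK=\LL^{\chi(\mumu_3)}$; then $\LL/\KK$ is cyclic of degree~$3$, contains $\omega$, and a generator $\sigma$ of $\Gal\left(\LL/\KK\right)$ acts on $\xi$ by $\sigma(\xi)=\xi^d$ exactly as in the proof of Lemma~\ref{lemma:attained-n}. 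Pick $a\in\KK^*$ outside the image of $N_{\LL/\KK}$ by Lemma~\ref{lemma:norms} and form the cyclic algebra $A$ associated with $\sigma$ and $a$; by Lemmas~\ref{lemma:not-matrix} and~\ref{lemma:matrix-or-division} it is a division algebra of dimension~$9$, corresponding to a non-trivial Severi--Brauer surface $S$ with $\Aut(S)\cong A^*/\KK^*$ by Lemma~\ref{lemma:SB-Aut}. Because $\omega\in\KK$ and $\LL/\KK$ is cyclic of degree~$3$, Kummer theory provides $v\in\LL\setminus\KK$ with $v^3\in\KK^*$ and $\sigma(v)=\omega v$; the defining relation of $A$ then yields $v\alpha=\alpha\sigma(v)=\omega\alpha v$.

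Let $\hat\xi,\hat\alpha,\hat v\in A^*/\KK^*$ denote the images. As in Lemma~\ref{lemma:attained-n}, $\hat\xi$ has order $n$, $\hat\alpha$ has order $3$, and Lemma~\ref{lemma:semi-direct-product-relations} identifies $H=\langle\hat\xi,\hat\alpha\rangle$ with the balanced semidirect product $G'$. The element $\hat v$ has order~$3$ (since $v^3\in\KK$ but $v\notin\KK$), commutes with $\hat\xi$ (both lie in the commutative field $\LL$), and commutes with $\hat\alpha$ because $\omega\in\KK^*$. The main obstacle, and the only new ingredient beyond Lemma~\ref{lemma:attained-n}, is to verify that $\hat v\notin H$, so that $\langle\hat\xi,\hat\alpha,\hat v\rangle$ has order $9n$ and is isomorphic to $\mumu_3\times G'\cong G$. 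For this, use the direct-sum decomposition $A=\LL\oplus\LL\alpha\oplus\LL\alpha^2$: any putative relation $v=c\xi^i\alpha^j$ with $c\in\KK^*$ forces $j=0$ since $v\in\LL$, and then applying $\sigma$ yields $\omega v=c\xi^{id}$, i.e.\ $\omega=\xi^{i(d-1)}$, which is impossible because $\omega$ has order~$3$ while $\xi$ has order $n$ coprime to~$3$. This produces a subgroup of $\Aut(S)$ isomorphic to $G$, as required.
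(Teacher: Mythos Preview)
Your argument is correct and takes a genuinely different route from the paper's. The paper adjoins an explicit Kummer generator $\tau=\sqrt[3]{2}$ to $\QQ(\omega,\xi)$, so that $\LL=\QQ(\omega,\xi,\tau)$ and $\Gal(\LL/\QQ(\omega))\cong\mumu_3\times\mumu_n^*$; the homomorphism $\chi$ is then chosen diagonally into both factors, forcing $\sigma(\tau)=\omega\tau$ by construction. This requires a separate verification that $\QQ(\omega,\tau)\cap\QQ(\omega,\xi)=\QQ(\omega)$, and the disjointness of $\langle\hat\tau\rangle$ from $\hat G'$ is deduced indirectly via the auxiliary group $\langle\hat\alpha,\hat\tau\rangle\cong\mumu_3^2$ having trivial intersection with $\langle\hat\xi\rangle$. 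You instead keep $\LL=\QQ(\omega,\xi)$ and invoke Kummer theory abstractly to obtain $v$, avoiding the field-intersection computation entirely; your check that $\hat v\notin H$ via the $\LL$-module decomposition $A=\LL\oplus\LL\alpha\oplus\LL\alpha^2$ together with the order argument $\omega=\xi^{i(d-1)}$ is more direct. The paper's approach is more concrete and treats $n=1$ uniformly, while yours is more economical but must split off $n=1$ (which you do correctly via Example~\ref{example:cyclic-algebra}). Both are valid; yours is arguably cleaner.
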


\begin{proof}
Let $\omega$ and $\xi$ denote primitive roots of unity
of degrees $3$ and~$n$, respectively; let~$\tau$ be a cubic root
of~$2$. Set $\MM=\QQ(\omega)$ and $\LL=\MM(\xi, \tau)$.
We observe that
$$
\MM(\tau)\cap\MM(\xi)=\MM.
$$
Indeed, this intersection contains $\MM$ and is contained in
$\MM(\tau)$. The degree of the extension~\mbox{$\MM(\tau)/\MM$}
equals $3$, so we only need to check that $\MM(\tau)$ is not contained in
$\MM(\xi)$. On the other hand, if  $\MM(\tau)\subset\MM(\xi)$,
then $\QQ(\tau)$ is a subfield of $\MM(\xi)$ as well. Since
$$
\MM(\xi)=\QQ(\omega,\xi)=\QQ(\omega\xi),
$$
we know from Theorem~\ref{theorem:cyclotomic} that the Galois group
$\Gal\left(\MM(\xi)/\QQ\right)$ is cyclic. Hence every
subfield of $\MM(\xi)$ is normal over $\QQ$, which is not the case
for~$\QQ(\tau)$.

Note that $\MM(\xi)/\MM$
is a Galois extension with Galois group isomorphic to
$\mumu_n^*$ by Lemma~\ref{lemma:different-xis},
and $\MM(\tau)/\MM$ is a Galois
extension with Galois group isomorphic to $\mumu_3$.
Since the extension $\LL/\MM$ can be represented as the composite of
the extensions $\MM(\tau)/\MM$ and $\MM(\xi)/\MM$,
we conclude that
$$
\Gal\left(\LL/\MM\right)\cong
\Gal\left(\MM(\tau)/\MM\right)\times\Gal\left(\MM(\xi)/\MM\right)\cong
\mumu_3\times\mumu_n^*,
$$
see for instance \cite[Theorem~VIII.1.5]{Lang}.

Let $\chi\colon \mumu_{3}\to\Gal\left(\LL/\MM\right)$ be the homomorphism
such that its
composition with the projection
$\Gal\left(\LL/\MM\right)\to\Gal\left(\MM(\tau)/\MM\right)$
is an isomorphism $\mumu_3\stackrel{\sim}\longrightarrow\mumu_3$,
and the composition with the projection
$\Gal\left(\LL/\MM\right)\to\Gal\left(\MM(\xi)/\MM\right)$
is the homomorphism
$$
\chi'\colon \mumu_3\to\mumu_n^*
$$
giving rise to the semidirect product structure on $G'$.
Let $\KK\subset\LL$ be the subfield of invariants of
the subgroup~\mbox{$\chi(\mumu_3)\subset \Gal\left(\LL/\MM\right)$}.
Then $\LL/\KK$ is a Galois extension such that
$\Gal\left(\LL/\KK\right)\cong\mumu_3$.
A generator $\sigma$ of $\Gal\left(\LL/\KK\right)$
sends $\xi\in\LL$ to $\xi^d$,
where $d$
is some integer such that~\mbox{$d^3\equiv 1\pmod n$},
and sends $\tau\in\LL$ to~$\omega\tau$.
Since the homomorphism $\chi'$ is balanced,
we conclude from Lemma~\ref{lemma:non-balanced-structure} that
for all $1\le m<n$ one has $\sigma(\xi^m)\neq\xi^m$;
thus~\mbox{$\xi^m\not\in\KK$}.

By Lemma~\ref{lemma:norms} there exists an element $a\in\KK$ such that $a$
is not contained in the image
of the Galois norm of the field extension $\LL/\KK$.
Consider the cyclic algebra $A$ over~$\KK$ associated with $\sigma$ and $a$.
We know that $A$ is a central simple algebra, and
$A$ is not a matrix algebra by Lemma~\ref{lemma:not-matrix}.
Thus $A$ is a division algebra by Lemma~\ref{lemma:matrix-or-division}.
The algebra~$A$ contains the $n$-th root of unity $\xi$
and an element~$\alpha$ such that $\alpha^3=a$ and
relation~\eqref{eq:relation}.
holds. Furthermore, $A$ contains $\tau$, and
\begin{equation}\label{eq:tau}
\tau\alpha=\omega\alpha\tau.
\end{equation}

Denote by $\hat{\alpha}$, $\hat{\xi}$, and $\hat{\tau}$
the images in
the quotient group $A^*/\KK^*$ of the elements $\alpha$,~$\xi$,
and~$\tau$,
respectively. Denote by
$\hat{G}'$ the subgroup of $A^*/\KK^*$
generated by $\hat{\alpha}$ and $\hat{\xi}$,
and by $\hat{G}$ the subgroup generated by $\hat{\alpha}$, $\hat{\xi}$,
and $\hat{\tau}$.
The order of $\hat{\alpha}$ in $A^*/\KK^*$ equals~$3$.
The order of $\hat{\xi}$ in~\mbox{$A^*/\KK^*$} equals $n$,
and the cyclic groups generated by $\hat{\xi}$ and $\hat{\alpha}$
have trivial intersection.
Using relation~\eqref{eq:relation} and Lemma~\ref{lemma:semi-direct-product-relations}, we see that
$\hat{G}'$ is the semidirect product of $\mumu_n$ and~$\mumu_3$
corresponding to the homomorphism $\chi'$, and so
$\hat{G}'\cong G'$
(cf. the proof of Lemma~\ref{lemma:attained-n}).

Since $\omega\in\KK$, we see from relation~\eqref{eq:tau} that
the group $\hat{G}''$ generated by $\hat{\alpha}$ and $\hat{\tau}$
is isomorphic to $\mumu_3^2$ (cf. Example~\ref{example:cyclic-algebra}).
In particular, $\hat{\tau}$ has order $3$ and commutes with~$\hat{\alpha}$.
Since $\tau$ and $\xi$ are elements of the field $\LL\subset A$, they
commute with each other, and thus~$\hat{\tau}$ commutes with~$\hat{\xi}$.
Hence~$\hat{\tau}$ commutes with the group $\hat{G}'$ generated by
$\hat{\alpha}$ and $\hat{\xi}$.
Since~$n$ is coprime to $3$, the group $\hat{G}''$
has trivial intersection
with the cyclic group generated by~$\hat{\xi}$.
This implies that the cyclic group $\mumu_3$ generated by $\hat{\tau}$
has trivial intersection with the group $\hat{G}'$. Thus we conclude that
$$
\hat{G}\cong\mumu_3\times \hat{G}'\cong\mumu_3\times G'\cong G.
$$

Therefore, the elements $\hat{\xi}$, $\hat{\alpha}$, and $\hat{\tau}$
generate a subgroup isomorphic to $G$ in~\mbox{$A^*/\KK^*$}.
Thus the automorphism group of the Severi--Brauer surface corresponding to
$A$ contains~$G$ by Lemma~\ref{lemma:SB-Aut}.
\end{proof}

Lemmas~\ref{lemma:attained-n} and \ref{lemma:attained-3n}
imply the following.

\begin{corollary}\label{corollary:elements-attained}
Let~\mbox{$n=3^r\prod p_i^{r_i}$} be a positive integer,
where $p_i$ are primes congruent to~$1$
modulo $3$, and $r\le 1$. Then
there exists a number field $\KK$ and a non-trivial Severi--Brauer
surface $S$ over~$\KK$ such that
the group $\Aut(S)$ contains an element of order~$n$.
\end{corollary}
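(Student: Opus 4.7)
The plan is to deduce the statement from Lemmas~\ref{lemma:attained-n} and~\ref{lemma:attained-3n} by choosing an appropriate balanced semidirect product and extracting from it an element of the required order. Write $m=\prod p_i^{r_i}$, so that $n=m$ when $r=0$ and $n=3m$ when $r=1$.

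First I would check that for every such $m$ a balanced semidirect product $\mumu_m\rtimes\mumu_3$ exists. When $m=1$ it is the group $\mumu_3$ itself, which is balanced by convention. When $m>1$, each factor $\mumu_m(p_i)^*\cong\mumu_{p_i^{r_i-1}(p_i-1)}$ has order divisible by $3$ (since $p_i\equiv 1\pmod 3$) and therefore contains a unique subgroup of order $3$; choosing a generator $\delta_i$ of each and sending a generator of $\mumu_3$ to $\prod\delta_i$ in $\mumu_m^*\cong\prod\mumu_m(p_i)^*$ yields a homomorphism $\chi\colon\mumu_3\to\mumu_m^*$ whose composition with every projection $\mumu_m^*\to\mumu_m(p_i)^*$ is an embedding, hence a balanced semidirect product.

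If $r=0$ and $m=1$, the conclusion is vacuous: any non-trivial Severi--Brauer surface over a number field (for instance, those produced in Example~\ref{example:cyclic-algebra}) carries the identity automorphism of order~$1$. If $r=0$ and $m>1$, I would apply Lemma~\ref{lemma:attained-n} to the balanced semidirect product $G\cong\mumu_m\rtimes\mumu_3$ built above; the resulting surface $S$ satisfies $G\subset\Aut(S)$, and a generator of the normal subgroup $\mumu_m\subset G$ has order $m=n$. If $r=1$, I would apply Lemma~\ref{lemma:attained-3n} with $m$ in place of $n$, obtaining a non-trivial Severi--Brauer surface $S$ whose automorphism group contains $G\cong\mumu_3\times(\mumu_m\rtimes\mumu_3)$. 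Taking a generator $g_1$ of the $\mumu_3$ direct factor and an element $g_2$ of order $m$ in the normal subgroup $\mumu_m$ of the second factor, the elements $g_1,g_2$ commute and have coprime orders $3$ and $m$, so $g_1g_2$ has order $3m=n$.

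No serious obstacle arises here, since the heavy lifting is already contained in Lemmas~\ref{lemma:attained-n} and~\ref{lemma:attained-3n}; the only real point is the existence of a balanced semidirect product for each admissible $m$, which follows from the structure of $\mumu_m^*$ together with the congruence hypothesis on the $p_i$.
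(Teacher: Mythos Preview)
Your proposal is correct and follows the same approach as the paper, which simply states that the corollary is implied by Lemmas~\ref{lemma:attained-n} and~\ref{lemma:attained-3n}. You have spelled out the details the paper leaves implicit---the existence of a balanced semidirect product for each admissible $m$, and the extraction of an element of the required order from the groups furnished by those lemmas---but the underlying route is identical.
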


\section{Automorphisms of prime order}
\label{section:prime}

In this section we describe the possible finite orders
of automorphisms of non-trivial Severi--Brauer surfaces.

Corollary~\ref{corollary:elements-attained} shows that for every positive
integer $n$
not divisible by $9$ and
not divisible by primes congruent to $2$
modulo~$3$ there exists a non-trivial Severi--Brauer surface
over a field of characteristic zero
with an automorphism of order $n$.
We complete this picture by showing that these are all
possible finite orders of automorphisms of non-trivial Severi--Brauer surfaces.
Let us start with a general observation.

\begin{lemma}\label{lemma:unique-point}
Let $S$ be a non-trivial Severi--Brauer surface over a field $\KK$ of characteristic zero.
Let $g$ be a non-trivial automorphism of finite order of $S$. Then $g$ has exactly three
fixed points on~$S_{\bar{\KK}}$, and these points are transitively permuted
by the Galois group~\mbox{$\Gal\left(\bar{\KK}/\KK\right)$}.
In particular, $g$ cannot have a unique isolated fixed point
on~$S_{\bar{\KK}}$.
\end{lemma}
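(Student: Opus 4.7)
The plan is to pass to the algebraic closure, where $S_{\bar{\KK}}\cong\PP^2_{\bar{\KK}}$ and $g$ becomes a non-trivial element of $\PGL_3(\bar{\KK})$ of finite order. Since the characteristic is zero, $g$ lifts to a diagonalizable element of $\GL_3(\bar{\KK})$, so up to conjugation it is represented by a diagonal matrix $\mathrm{diag}(\lambda_1,\lambda_2,\lambda_3)$. The three coincidence patterns among the $\lambda_i$ yield three possibilities for the fixed locus of $g$ on $\PP^2_{\bar{\KK}}$: if all three eigenvalues coincide then $g$ is the identity in $\PGL_3$; if exactly two of them coincide then the fixed locus is the disjoint union of a projective line and one isolated point; and if all three are distinct then the fixed locus consists of exactly the three standard coordinate points.

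Since $g$ is non-trivial, the first possibility is excluded, and I would rule out the second by invoking non-triviality of $S$. Indeed, in that case the isolated fixed point is unique, hence automatically invariant under $\Gal\left(\bar{\KK}/\KK\right)$, and therefore defined over $\KK$; but a Severi--Brauer surface that admits a $\KK$-rational point is isomorphic to $\PP^2$ over~$\KK$, contradicting the assumption that $S$ is non-trivial. Hence $g$ must have exactly three isolated fixed points on $S_{\bar{\KK}}$, proving the first assertion of the lemma.

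Finally, I would analyze the Galois action on this three-element set $\Fix(g)\subset S_{\bar{\KK}}$. Any Galois-fixed point of $\Fix(g)$ would again descend to a $\KK$-point of $S$ and contradict the non-triviality of $S$, so the orbit decomposition of $\Fix(g)$ contains no orbit of size $1$. Since the sizes of the orbits sum to~$3$, the only possibility is a single orbit of size $3$; the Galois action is therefore transitive, and in particular $g$ cannot have a unique isolated fixed point on $S_{\bar{\KK}}$. The whole argument is a combination of routine linear algebra (diagonalizability of finite-order elements of $\GL_3$ and the explicit description of their fixed loci on $\PP^2$) with the defining property of non-trivial Severi--Brauer surfaces, so no serious obstacle is anticipated; the only step demanding mild care is the classification of fixed loci in the three eigenvalue patterns.
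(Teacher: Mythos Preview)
Your proposal is correct and follows essentially the same approach as the paper's proof: classify the fixed locus of a finite-order element of $\PGL_3(\bar{\KK})$ via eigenvalue patterns, rule out the line-plus-isolated-point case because the unique isolated point would descend to a $\KK$-point, and conclude transitivity on the three remaining fixed points since any singleton Galois orbit would likewise produce a $\KK$-point. The paper states the dichotomy more tersely and invokes the commutation of the Galois action with $g$ explicitly, but the substance is identical.
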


\begin{proof}
Since $g$ has finite order, it either has exactly three fixed points on~$S_{\bar{\KK}}$, or
it acts on~$S_{\bar{\KK}}$ with a unique isolated fixed point.
Suppose that the latter is the case.
Since the action of $\Gal\left(\bar{\KK}/\KK\right)$
on $S$ commutes with the action of $g$, the isolated fixed point must be
$\Gal\left(\bar{\KK}/\KK\right)$-invariant,
and thus defined over $\KK$.
This is impossible because the Severi--Brauer surface
$S$ is non-trivial.
Therefore, $g$ has exactly three fixed points on~$S_{\bar{\KK}}$;
since~$S$ has no points over~$\KK$, these three points have to be
transitively permuted
by~\mbox{$\Gal\left(\bar{\KK}/\KK\right)$}.
\end{proof}

\begin{lemma}[{cf. \cite[Lemma~4.2]{Shramov-SB}}]
\label{lemma:3k2}
Let $p\equiv 2\pmod 3$ be a prime number.
Let~$S$ be a Severi--Brauer surface over a field $\KK$ of characteristic zero
such that the group~\mbox{$\Aut(S)$}
contains an element $g$ of order $p$. Then $S\cong\PP^2$.
\end{lemma}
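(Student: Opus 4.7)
My plan is to argue by contradiction: suppose $S \not\cong \PP^2$ and let $A$ be the associated central simple $\KK$-algebra, which has dimension~$9$ over~$\KK$. By Lemma~\ref{lemma:matrix-or-division}, $A$ is then a division algebra. By Lemma~\ref{lemma:SB-Aut} the element $g$ lifts to some $\hat{g} \in A^*$, and because $g$ has order~$p$ in $A^*/\KK^*$, the element $c := \hat{g}^p$ lies in $\KK^*$.

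Next I would examine the subring $B = \KK[\hat{g}] \subset A$. Being a finite-dimensional commutative $\KK$-subalgebra of a division algebra, $B$ is automatically a field. Since $\hat{g} \notin \KK$ (otherwise $g$ would be trivial in $A^*/\KK^*$) we have $[B:\KK] > 1$; and since every subfield of a central simple algebra of degree~$3$ has dimension dividing~$3$ over the center, we conclude $[B:\KK] = 3$.

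The main step is to apply the norm $N = N_{B/\KK}$ to the identity $\hat{g}^p = c$. As $c \in \KK^*$ and $[B:\KK] = 3$, we obtain $N(\hat{g})^p = N(c) = c^3 = \hat{g}^{3p}$, so the element $u := \hat{g}^3/N(\hat{g}) \in B^*$ satisfies $u^p = 1$. If $u = 1$, then $\hat{g}^3 \in \KK^*$; combined with $\hat{g}^p \in \KK^*$ and $\gcd(3,p) = 1$, B\'ezout gives $\hat{g} \in \KK^*$, contradicting $[B:\KK] = 3$. Therefore $u$ is a primitive $p$-th root of unity contained in $B$.

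To finish, observe that $\KK(u) = \KK(\mumu_p) \subseteq B$, so $[\KK(u):\KK]$ divides $[B:\KK] = 3$; it also divides $p-1$, since $\KK(\mumu_p)/\KK$ is abelian with Galois group a subgroup of $(\ZZ/p\ZZ)^*$. The assumption $p \equiv 2 \pmod 3$ gives $3 \nmid p-1$, so we must have $[\KK(u):\KK] = 1$, i.e.\ $\mumu_p \subset \KK$. But then Kummer theory applied to $B = \KK(\hat{g})$ with $\hat{g}^p \in \KK^*$ forces $[B:\KK] \in \{1, p\}$, contradicting $[B:\KK] = 3$ since $p \neq 3$. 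Thus $A$ must be a matrix algebra, proving $S \cong \PP^2$. I expect the main obstacle to be setting up the norm identity so that a nontrivial $p$-th root of unity appears inside $B$; once that element is produced, the cyclotomic degree bound and Kummer-theoretic dichotomy dispose of both remaining subcases uniformly.
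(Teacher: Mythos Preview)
Your argument is correct, and it takes a genuinely different route from the paper's proof. The paper argues geometrically: it lifts $g$ to a unique element $\tilde g\in\SL_3(\bar\KK)$ of order~$p$ defined over~$\KK$, invokes Lemma~\ref{lemma:unique-point} to see that the three eigenvalues of~$\tilde g$ (all $p$-th roots of unity) form a single $\Gal(\bar\KK/\KK)$-orbit of size~$3$, and concludes that the image of Galois in $\Aut(\mumu_p)\cong\mumu_{p-1}$ has order divisible by~$3$, contradicting $3\nmid p-1$. Your proof stays entirely inside the division algebra~$A$: the norm trick $u=\hat g^{\,3}/N_{B/\KK}(\hat g)$ manufactures a primitive $p$-th root of unity in the cubic subfield~$B$, after which the cyclotomic degree bound forces $\mumu_p\subset\KK$, and Kummer theory gives the contradiction $[B:\KK]\in\{1,p\}$. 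Both arguments ultimately exploit the same arithmetic obstruction $3\nmid p-1$, but yours avoids Lemma~\ref{lemma:unique-point} and the passage to~$\bar\KK$ altogether; it is in the spirit of the paper's second, algebra-theoretic proof of Lemma~\ref{lemma:7-over-Q}. The paper's approach, in turn, makes the Galois-orbit picture explicit and feeds directly into the eigenvalue analysis used again in Lemma~\ref{lemma:order-9}.
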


\begin{proof}
Suppose that the Severi--Brauer surface $S$ is non-trivial, and
consider the action of $g$ on $S_{\bar{\KK}}\cong\PP^2_{\bar{\KK}}$.
Let $\tilde{g}$ be a preimage of $g$ under the natural projection
$$
\pi\colon\SL_3\left(\bar{\KK}\right)\to\Aut\left(\PP_{\bar{\KK}}^2\right)
\cong\PGL_3\left(\bar{\KK}\right).
$$
The order of $\tilde{g}$ equals either $p$ or $3p$.
Since $p$ is not divisible by $3$, we can multiply $\tilde{g}$ by
an appropriate scalar matrix so that the order of $\tilde{g}$ equals $p$.
Moreover, the element~$\tilde{g}$ of order $p$ such that
$\pi(\tilde{g})=g$ is unique
in~$\SL_3\left(\bar{\KK}\right)$, and
hence it is defined over the field~$\KK$.

Let $\xi_1$, $\xi_2$, and $\xi_3$ be the eigen-values of $\tilde{g}$;
these are $p$-th roots of unity.
According to Lemma~\ref{lemma:unique-point}, the element $g$
has exactly three fixed points on $\PP^2_{\bar{\KK}}$, and these points
are transitively permuted by the group $\Gal\left(\bar{\KK}/\KK\right)$.
In other words, the roots of unity $\xi_1$, $\xi_2$, and $\xi_3$
are pairwise different and form a $\Gal\left(\bar{\KK}/\KK\right)$-orbit.
Hence the image $\Gamma$ of~\mbox{$\Gal\left(\bar{\KK}/\KK\right)$}
in the automorphism
group $\Aut(\mumu_p)\cong\mumu_{p-1}$ of
the multiplicative group $\mumu_p$ of $p$-th roots of unity has an
orbit of order~$3$. This means that the order of $\Gamma$ is divisible
by $3$. However, the number~\mbox{$p-1$}
is not divisible by $3$ by assumption, which gives
a contradiction.
\end{proof}

The argument from the proof of Lemma~\ref{lemma:3k2}
cannot be applied to an automorphism of a Severi--Brauer surface
whose order is divisible by $3$, since in this case we cannot always
find a preimage of such an automorphism in $\SL_3\left(\bar{\KK}\right)$
defined
over the base field~$\KK$. However, we can study such elements
using a slightly modified argument.

\begin{lemma}
\label{lemma:order-9}
Let~$S$ be a Severi--Brauer surface over a field $\KK$ of characteristic zero
such that the group~\mbox{$\Aut(S)$}
contains an element $g$ of order $9$. Then $S\cong\PP^2$.
\end{lemma}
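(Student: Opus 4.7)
I would argue by contradiction, adapting the proof of Lemma~\ref{lemma:3k2} to account for the fact that a preimage of $g$ in $\SL_3(\bar{\KK})$ of order exactly $9$ cannot in general be chosen over $\KK$. Suppose $S$ is non-trivial and $g\in\Aut(S)$ has order $9$. Working on $S_{\bar{\KK}}\cong\PP^2_{\bar{\KK}}$, I would pick any lift $\tilde{g}\in\GL_3(\bar{\KK})$ of $g$ and diagonalize it: in some eigenbasis $v_1,v_2,v_3$ of $\bar{\KK}^3$ with projectivizations $P_i=[v_i]$, and after scaling so that the first eigenvalue equals~$1$, the matrix of $\tilde{g}$ becomes $\mathrm{diag}(1,a,b)$ with $a,b\in\bar{\KK}^*$ roots of unity. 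The condition that $g$ has order exactly $9$ in $\PGL_3(\bar{\KK})$ is precisely that the subgroup of $\bar{\KK}^*$ generated by $a$ and $b$ equals $\mumu_9$; writing $a=\omega^\alpha$ and $b=\omega^\beta$ for a primitive $9$-th root of unity $\omega$, this reads $\gcd(\alpha,\beta,9)=1$.

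By Lemma~\ref{lemma:unique-point} the Galois group $\Gal(\bar{\KK}/\KK)$ permutes $\{P_1,P_2,P_3\}$ transitively, so some $\sigma\in\Gal(\bar{\KK}/\KK)$ acts on these three points as a $3$-cycle, which after relabeling I may assume is $P_i\mapsto P_{i+1}$ (indices modulo~$3$). Since $g$ is defined over $\KK$, there exists $\lambda_\sigma\in\bar{\KK}^*$ with $\sigma(\tilde{g})=\lambda_\sigma\tilde{g}$. Applying $\sigma$ entrywise to the identity $\tilde{g}v_i=\xi_i v_i$ (where $\xi_1=1$, $\xi_2=a$, $\xi_3=b$) shows that $\sigma(v_i)$ is an eigenvector of $\tilde{g}$ with eigenvalue $\sigma(\xi_i)/\lambda_\sigma$; matching with the prescribed permutation of fixed points forces $\sigma(\xi_i)=\lambda_\sigma\xi_{i+1}$. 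Taking $i=1$ yields $\lambda_\sigma=1/a$, and hence $\sigma(a)=b/a$ and $\sigma(b)=1/a$.

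Writing $\sigma(\omega)=\omega^c$ for some $c\in\mumu_9^*$, these relations translate to the congruences $(c+1)\alpha\equiv\beta\pmod 9$ and $c\beta\equiv-\alpha\pmod 9$, and substituting the first into the second gives
\begin{equation*}
(c^2+c+1)\,\alpha\equiv 0\pmod 9.
\end{equation*}
A direct check for $c\in\{1,2,4,5,7,8\}$ shows that $c^2+c+1$ is never divisible by~$9$: for $c\equiv 2\pmod 3$ it is coprime to~$3$, while for $c\equiv 1\pmod 3$ it is congruent to $3\pmod 9$. In the first case $\alpha\equiv 0\pmod 9$ and hence $\beta=(c+1)\alpha\equiv 0\pmod 9$, forcing $a=b=1$ and $g=1$, a contradiction. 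In the second case $3\mid\alpha$ and, since $c+1\equiv 2\pmod 3$, also $3\mid\beta$, so $a,b\in\mumu_3$ and $g$ has order dividing~$3$, again contradicting the assumption.

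The main technical hurdle is bookkeeping the scalar cocycle $\lambda_\sigma$: the eigenvalue normalization of $\tilde{g}$ can only be chosen over $\bar{\KK}$, so one must carefully track how the Galois action mixes this scaling with the permutation of the eigenvalues. Once the relation $\sigma(\xi_i)=\lambda_\sigma\xi_{i+1}$ is pinned down, the rest reduces to the elementary observation that $x^2+x+1$ has no zeros modulo~$9$ at any $c\in(\ZZ/9\ZZ)^*$.
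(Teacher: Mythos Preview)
Your argument is correct and follows essentially the same route as the paper: both track how a Galois element that $3$-cycles the fixed points of $g$ acts on the eigenvalue data of a lift, and reach a contradiction from elementary properties of ninth roots of unity. The paper works with the intrinsic tangent-space eigenvalues $\xi_i',\xi_i''$ at each $P_i$ (which are already ratios $\xi_j/\xi_i$, so no scalar cocycle appears), then performs a short case analysis on the unordered pairs $\{\xi_i',\xi_i''\}$ before forcing $\xi_1^3=\xi_2^3$. Your normalization $(1,a,b)$ together with the cocycle $\lambda_\sigma$ encodes exactly the same ratios, and your single congruence $(c^2+c+1)\alpha\equiv 0\pmod 9$ replaces the paper's case split by a uniform check---a somewhat cleaner endgame.

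One point of phrasing deserves tightening. For a non-trivial $S$ the Galois action on $\GL_3(\bar\KK)$ inherited from $S$ is the one twisted by the defining cocycle, not the entrywise action, so ``applying $\sigma$ entrywise'' does not literally yield $\sigma(\tilde g)=\lambda_\sigma\tilde g$, and $[\sigma_{\mathrm{naive}}(v_i)]$ need not coincide with $P_{i+1}$. The relation you use is nevertheless correct: since $g$ commutes with the Galois action on $S_{\bar\KK}$, the eigenvalue of $g$ on the tangent direction at $P_i$ pointing toward $P_j$, namely $\xi_j/\xi_i$, satisfies $\sigma(\xi_j/\xi_i)=\xi_{\pi(j)}/\xi_{\pi(i)}$; hence $\sigma(\xi_i)/\xi_{\pi(i)}$ is independent of $i$ and serves as your $\lambda_\sigma$. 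With that adjustment the proof is complete.
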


\begin{proof}
Suppose that the Severi--Brauer surface $S$ is non-trivial.
According to Lemma~\ref{lemma:unique-point}, the element $g$
has exactly three fixed points $P_1$, $P_2$, and $P_3$
on $\PP^2_{\bar{\KK}}$, and these points
are transitively permuted by the group $\Gal\left(\bar{\KK}/\KK\right)$.
Let $T_i\cong\bar{\KK}^2$
be the Zariski tangent space to $\PP^2_{\bar{\KK}}$ at the point
$P_i$. Then $g$ acts faithfully on $T_i$.
Let $\xi_i'$ and $\xi_i''$ be the eigen-values of this action.
Choose some preimage $\tilde{g}$ of $g$ under the projection
$$
\pi\colon\SL_3\left(\bar{\KK}\right)\to\PGL_3\left(\bar{\KK}\right);
$$
we emphasize that $\tilde{g}$ may not be defined over $\KK$.
Let $\xi_1$, $\xi_2$, and $\xi_3$ be the eigen-values of $\tilde{g}$
corresponding to the fixed
points $P_1$, $P_2$, and $P_3$ of $g$, respectively. Relabelling the
numbers $\xi_i'$ and $\xi_i''$ if necessary, we may assume that
$$
\xi_1'=\frac{\xi_2}{\xi_1}, \quad \xi_1''=\frac{\xi_3}{\xi_1}, \quad
\xi_2'=\frac{\xi_3}{\xi_2}, \quad \xi_2''=\frac{\xi_1}{\xi_2}, \quad
\xi_3'=\frac{\xi_1}{\xi_3}, \quad \xi_3''=\frac{\xi_2}{\xi_3}.
$$
Note that $\xi_i'$ and $\xi_i''$ are $9$-th roots of unity, while
$\xi_i$ may not be $9$-th roots of unity but are always $27$-th roots
of unity, because $\tilde{g}^9$ is a scalar matrix
in $\SL_3\left(\bar{\KK}\right)$.

If for some $i\neq j$ we have $\xi_i^3=\xi_j^3$,
then the non-trivial automorphism $g^3$ of $S$ has infinitely many
fixed points on $S_{\bar{\KK}}$, and thus $S\cong\PP^2$
by Lemma~\ref{lemma:unique-point}.
This gives some restrictions on the values of $\xi_i'$ and $\xi_i''$.
For instance,
let $\omega$ be a non-trivial cubic root of unity,
let $\upsilon$ be one of the numbers
$1$, $\omega$, or~$\omega^2$.
If $\xi_1'=\upsilon\xi_2''$, then
$$
\frac{\xi_2}{\xi_1}=\upsilon\frac{\xi_1}{\xi_2},
$$
so that $\upsilon\xi_1^2=\xi_2^2$. Thus $\xi_1^6=\xi_2^6$, and hence
$\xi_1^3=\xi_2^3$ because both $\xi_1$ and $\xi_2$ are $27$-th
roots of unity. Similarly, if $\xi_1'=\upsilon\xi_3''$, then
$$
\frac{\xi_2}{\xi_1}=\upsilon\frac{\xi_2}{\xi_3},
$$
and thus $\upsilon\xi_1=\xi_3$ and $\xi_1^3=\xi_3^3$.
In each of these cases we
see that $S\cong\PP^2$, which contradicts our assumption.

Since the points $P_1$, $P_2$, and $P_3$ are transitively permuted
by the Galois group~\mbox{$\Gal\left(\bar{\KK}/\KK\right)$}, the action of
$\Gal\left(\bar{\KK}/\KK\right)$ transitively permutes the non-ordered pairs
\begin{equation}\label{eq:pairs}
\{\xi_1',\xi_1''\},\quad \{\xi_2',\xi_2''\}, \quad \{\xi_3',\xi_3''\}.
\end{equation}
(Note that at the same time
the collection of eigen-values $\xi_1$, $\xi_2$, $\xi_3$
may be not preserved by the Galois group, since $\tilde{g}$ is not
necessarily defined over the field~$\KK$.)
Thus there exists an element
$\gamma\in\Gal\left(\bar{\KK}/\KK\right)$
such that $\gamma(\xi_1')$ and $\gamma^2(\xi_1')$ are contained in the second
and the third pairs in~\eqref{eq:pairs}, respectively,
and $\gamma^3(\xi_1')=\xi_1'$. Such an element defines an automorphism of order
$3$ of the cyclic subgroup of $\bar{\KK}^*$ generated by $\xi_1'$.
Since~$\xi_1'$ is a $9$-th root of unity, one has
$\gamma(\xi_1')=\upsilon^{-1}\xi_1'$ and
$\gamma^2(\xi_1')=\upsilon^{-2}\xi_1'$,
where $\upsilon$ is one of the numbers~$1$,~$\omega$, or~$\omega^2$.

The above computation shows that
$\gamma(\xi_1')\neq \xi_2''$ and $\gamma^2(\xi_1')\neq\xi_3''$,
so that~\mbox{$\gamma(\xi_1')=\xi_2'$} and~\mbox{$\gamma^2(\xi_1')=\xi_3'$}.
In other words,
we have
$$
\xi_1'=\upsilon\xi_2'=\upsilon^2\xi_3'.
$$
Thus
$$
\frac{\xi_2}{\xi_1}=\upsilon\frac{\xi_3}{\xi_2}=\upsilon^2\frac{\xi_1}{\xi_3}.
$$
Hence
$$
\frac{\xi_2^2}{\xi_1}=\upsilon\xi_3=\frac{\xi_1^2}{\xi_2},
$$
so that $\xi_1^3=\xi_2^3$.
This again means that $S\cong\PP^2$, and gives a contradiction.
\end{proof}

Let us summarize the results of the last two sections.

\begin{corollary}\label{corollary:order}
Let $n$ be a positive integer.
Then there exists a field $\KK$ of characteristic zero, a non-trivial
Severi--Brauer surface $S$ over $\KK$, and an element of order $n$
in the group~\mbox{$\Aut(S)$},
if and only if~\mbox{$n=3^r\prod p_i^{r_i}$},
where $p_i$ are primes congruent to $1$
modulo $3$, and $r\le 1$.
\end{corollary}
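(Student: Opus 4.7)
The plan is to combine the existence result of Corollary~\ref{corollary:elements-attained} with the non-existence results of Lemmas~\ref{lemma:3k2} and~\ref{lemma:order-9} to obtain both directions of the equivalence.

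For the \emph{if} direction, suppose $n = 3^r \prod p_i^{r_i}$ with $p_i \equiv 1 \pmod{3}$ and $r \le 1$. Then Corollary~\ref{corollary:elements-attained} directly provides a number field $\KK$ and a non-trivial Severi--Brauer surface $S$ over $\KK$ whose automorphism group contains an element of order exactly $n$, so there is nothing more to do.

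For the \emph{only if} direction, suppose $S$ is a non-trivial Severi--Brauer surface over a field $\KK$ of characteristic zero and $g \in \Aut(S)$ has order $n$. Write $n = 3^r \cdot m$ with $\gcd(m,3)=1$, and factor $m = \prod q_j^{s_j}$ into pairwise distinct primes. I would rule out each forbidden arithmetic possibility by passing to an appropriate power of $g$. First, if some prime divisor $q_j$ of $n$ satisfies $q_j \equiv 2 \pmod{3}$, then the element $g^{n/q_j}$ has order $q_j$, and Lemma~\ref{lemma:3k2} forces $S \cong \PP^2$, contradicting non-triviality. Hence every prime divisor of $m$ is congruent to $1$ modulo $3$. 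Second, if $r \ge 2$, so that $9 \mid n$, the element $g^{n/9}$ has order $9$, and Lemma~\ref{lemma:order-9} likewise forces $S \cong \PP^2$, again a contradiction; thus $r \le 1$. Combining these restrictions gives the required form for $n$.

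There is no real obstacle here: the entire statement is an immediate synthesis of results already established in the paper. The only things to be careful about are (i) reducing the general case to a prime-power power of $g$ so that Lemmas~\ref{lemma:3k2} and~\ref{lemma:order-9} apply directly (this uses only the elementary fact that in a cyclic group of order $n$ the element $g^{n/k}$ has order $k$ whenever $k \mid n$), and (ii) noting that in both Lemmas~\ref{lemma:3k2} and~\ref{lemma:order-9}, the conclusion $S \cong \PP^2$ is exactly the negation of the non-triviality assumption on $S$.
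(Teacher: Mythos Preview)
Your proof is correct and follows essentially the same approach as the paper: the ``if'' direction is Corollary~\ref{corollary:elements-attained}, and the ``only if'' direction reduces to Lemmas~\ref{lemma:3k2} and~\ref{lemma:order-9} by passing to suitable powers of $g$. The paper's proof is slightly terser (it simply notes that every prime divisor of $n$ other than $3$ must be $\equiv 1\pmod 3$ by Lemma~\ref{lemma:3k2} and that $9\nmid n$ by Lemma~\ref{lemma:order-9}), but the content is identical.
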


\begin{proof}
The ``if'' part of the assertion is given by
Corollary~\ref{corollary:elements-attained}.
For the ``only if'' part, let 
$n$ be the order of some element of $\Aut(S)$. Then every prime divisor of $n$ either equals $3$, or is  
congruent to $1$ modulo $3$ by Lemma~\ref{lemma:3k2}; 
and $n$ is not divisible by $9$ by Lemma~\ref{lemma:order-9}.
\end{proof}

\section{Subgroups}
\label{section:subgroups}

In this section we classify finite subgroups of automorphism groups
of non-trivial Severi--Brauer surfaces and prove Theorem~\ref{theorem:main}.

\begin{lemma}\label{lemma:unique-fixed-point}
Let $\bar{\KK}$ be an algebraically closed field of characteristic zero,
and let~\mbox{$\check{G}\cong\mumu_p^2$},
where $p$ is a prime number,
be a finite subgroup of~\mbox{$\GL_3\left(\bar{\KK}\right)$}.
Suppose that $\check{G}$ does not contain non-trivial scalar matrices.
Then there exists an element of~$\check{G}$
whose fixed point locus on $\PP^2_{\bar{\KK}}$
contains a unique isolated point.
\end{lemma}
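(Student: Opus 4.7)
The plan is to diagonalize the abelian group $\check{G}$ simultaneously and then extract the desired element from a carefully chosen character kernel.

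First, because $\check{G}$ is a finite abelian subgroup of $\GL_3(\bar{\KK})$ and the base field has characteristic zero, every element of $\check{G}$ has finite order and the family is simultaneously diagonalizable. I would fix a basis of $\bar{\KK}^3$ in which every $g\in\check{G}$ takes the form
\begin{equation*}
g=\mathrm{diag}\bigl(\chi_1(g),\chi_2(g),\chi_3(g)\bigr),
\end{equation*}
where $\chi_1,\chi_2,\chi_3\colon\check{G}\to\bar{\KK}^*$ are characters. Since every element of $\check{G}\cong\mumu_p^2$ has order dividing $p$, each $\chi_i$ lands in the group $\mumu_p$ of $p$-th roots of unity.

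Next, I would translate the hypothesis "no non-trivial scalar matrices in $\check{G}$". An element $g$ is scalar precisely when $\chi_1(g)=\chi_2(g)=\chi_3(g)$, i.e. when $g$ lies in the kernel of the homomorphism
\begin{equation*}
\Phi\colon \check{G}\longrightarrow \mumu_p\times\mumu_p,\qquad g\longmapsto\bigl(\chi_1(g)\chi_2(g)^{-1},\, \chi_1(g)\chi_3(g)^{-1}\bigr).
\end{equation*}
Thus the hypothesis says $\Phi$ is injective; since $|\check{G}|=p^2=|\mumu_p\times\mumu_p|$, it is an isomorphism. In particular, the kernel of the first coordinate of $\Phi$, namely the subgroup $\{g\in\check{G}\mid \chi_1(g)=\chi_2(g)\}$, has order exactly $p$, hence contains a non-trivial element $g_0$.

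For such a $g_0$ we have $\chi_1(g_0)=\chi_2(g_0)=:\lambda$, and the injectivity of $\Phi$ forces $\chi_1(g_0)\neq\chi_3(g_0)$, so $\chi_3(g_0)=:\mu\neq\lambda$. Finally, I would compute the fixed locus on $\PP^2_{\bar{\KK}}$ of the diagonal matrix $\mathrm{diag}(\lambda,\lambda,\mu)$: a point $[x:y:z]$ is fixed iff either $z=0$ (giving the projective line $\{z=0\}$, the projectivised eigenspace for $\lambda$) or $x=y=0$ (giving the single point $[0:0:1]$, the projectivised eigenspace for $\mu$). These two components are disjoint, so the fixed locus consists of a projective line together with exactly one isolated point, which is what was required.

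The argument is quite short; there is no real obstacle once one commits to simultaneous diagonalisation. The only point that needs care is ensuring that the no-scalar condition is translated into exactly the right statement about characters — namely that $\Phi$ is an isomorphism, not merely that no single $\psi_{ij}=\chi_i\chi_j^{-1}$ is trivial — so that one can guarantee a non-trivial element in $\ker\psi_{12}$ whose third character value differs from the other two.
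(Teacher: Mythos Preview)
Your proof is correct. It follows the same opening move as the paper---simultaneously diagonalise the finite abelian group---but then diverges in how the desired element is located.

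The paper enlarges $\check{G}$ by adjoining the scalar matrix $\mathrm{diag}(\xi,\xi,\xi)$ to form a group $\hat{G}$ of order $p^3$; since there are exactly $p^3$ diagonal matrices with entries in $\mumu_p$, the group $\hat{G}$ must be all of them, so in particular $\mathrm{diag}(\xi,1,1)\in\hat{G}$, and its image in $\PGL_3$ (which lies in the image of $\check{G}$) has the required fixed locus. You instead stay inside $\check{G}$ and phrase the no-scalar hypothesis as injectivity of the character map $\Phi=(\chi_1\chi_2^{-1},\chi_1\chi_3^{-1})$, which by cardinality is then an isomorphism; picking a non-trivial element in the kernel of the first coordinate gives directly a $g_0\in\check{G}$ with a repeated eigenvalue. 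The two arguments are dual reformulations of the same cardinality fact (your $\Phi$ is an isomorphism iff the paper's $\hat{G}$ is the full diagonal $\mumu_p$-torus), but your version has the minor advantage of producing the element in $\check{G}$ itself rather than passing through $\hat{G}$ and then invoking that the images in $\PGL_3$ agree.
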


\begin{proof}
Since $\check{G}$ is a finite abelian group, we may assume that it consists of diagonal matrices.  
Let $\xi$ be a non-trivial $p$-th root of unity,
and let $\hat{G}$ be the group generated by~$\check{G}$ and the
scalar matrix with diagonal entries equal to~$\xi$.
Obviously, one has $|\hat{G}|=p^3$, and
the image of $\hat{G}$ in $\PGL_3\left(\bar{\KK}\right)$ coincides
with that of $\check{G}$.
Since the total number of diagonal elements of order $p$
in $\GL_3\left(\bar{\KK}\right)$ equals $p^3-1$, all of them must be
contained in~$\hat{G}$. In particular, this applies to the matrix
$$
A=\left(
\begin{array}{ccc}
\xi & 0 & 0\\
0 &  1 & 0\\
0 & 0 & 1
\end{array}
\right).
$$
The fixed locus of the image of $A$ in $\PGL_3\left(\bar{\KK}\right)$
is a union of a line and an isolated fixed point
in~$\PP^2_{\bar{\KK}}$.
\end{proof}

\begin{corollary}\label{corollary:p-squared-on-P2}
Let $p\neq 3$ be a prime number,
and let
$S$ be a Severi--Brauer surface over a field $\KK$ of characteristic zero
such that the group $\Aut(S)$
contains a subgroup $G\cong\mumu_p^2$. Then $S\cong\PP^2$.
In other words, for $p\ge 3$ every finite abelian $p$-group acting on
a non-trivial Severi--Brauer surface over a field of characteristic zero
is cyclic.
\end{corollary}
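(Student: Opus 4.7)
First, I would reduce immediately to the case $p \equiv 1 \pmod{3}$: if $p=2$ or $p \equiv 2 \pmod{3}$, then Lemma~\ref{lemma:3k2} applied to any element of $G$ of order $p$ already forces $S \cong \PP^2$. So I assume henceforth that $p$ is an odd prime with $p \equiv 1 \pmod{3}$, in particular coprime to $3$, and suppose for contradiction that $S$ is non-trivial. The plan is to lift $G \subset \Aut(S) \subset \PGL_3(\bar{\KK})$ to a subgroup $\check{G} \cong \mumu_p^2$ of $\SL_3(\bar{\KK})$ containing no non-trivial scalar matrix, and then to combine Lemmas~\ref{lemma:unique-fixed-point} and~\ref{lemma:unique-point} to reach a contradiction.

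To construct $\check{G}$, I would pick generators $g_1, g_2$ of $G$ and, as in the proof of Lemma~\ref{lemma:3k2}, lift each $g_i$ to the unique element $\tilde{g}_i \in \SL_3(\bar{\KK})$ of order $p$ projecting to $g_i$ (such a lift exists and is unique precisely because $\gcd(p,3)=1$). The main step is to verify that these lifts commute. Since $g_1,g_2$ commute, there is a scalar $\zeta \in \bar{\KK}^*$ with $\tilde{g}_1 \tilde{g}_2 \tilde{g}_1^{-1} = \zeta \tilde{g}_2$; taking determinants forces $\zeta^3=1$, while iterating the relation $p$ times and using $\tilde{g}_1^p = I$ gives $\zeta^p=1$, so $\zeta=1$ again by coprimality. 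This commutativity check is where the hypothesis $p \neq 3$ is used in an essential way, and it is the step I expect to be the main obstacle. Setting $\check{G} = \langle \tilde{g}_1, \tilde{g}_2 \rangle$, any non-trivial relation in $\check{G}$ would descend to a non-trivial relation in $G \cong \mumu_p^2$, so the projection $\check{G} \to G$ is an isomorphism; moreover any scalar matrix in $\check{G}$ is at once a cube root of unity (since $\check{G} \subset \SL_3$) and a $p$-th root of unity (since $\check{G}$ has exponent $p$), hence trivial.

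Lemma~\ref{lemma:unique-fixed-point} now produces some $h \in \check{G}$ whose fixed locus on $\PP^2_{\bar{\KK}}$ contains a unique isolated point. Its image in $\Aut(S)$ is a non-trivial automorphism of finite order with a unique isolated fixed point on $S_{\bar{\KK}}$, which contradicts Lemma~\ref{lemma:unique-point}; this proves the first assertion. For the ``in other words'' reformulation, I would invoke the elementary fact that a finite abelian $p$-group is cyclic if and only if its $p$-torsion subgroup has rank at most $1$, so any non-cyclic finite abelian $p$-group contains a copy of $\mumu_p^2$; applying the first half of the corollary then gives $S \cong \PP^2$.
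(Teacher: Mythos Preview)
Your proof is correct and follows the same overall strategy as the paper: lift $G$ to a copy of $\mumu_p^2$ inside $\SL_3(\bar{\KK})$ containing no non-trivial scalar matrices, then invoke Lemmas~\ref{lemma:unique-fixed-point} and~\ref{lemma:unique-point}. The only difference is in how the lift $\check{G}$ is produced. The paper sets $\tilde{G}=\pi^{-1}(G)$, a group of order $3p^2$, and takes $\check{G}$ to be its $p$-Sylow subgroup; since $p\neq 3$, this $\check{G}$ automatically maps isomorphically onto $G$ and contains no non-trivial scalars, with no further verification needed. Your construction---lifting generators individually to elements of order $p$ and checking by hand that the commutator $\zeta$ satisfies $\zeta^3=\zeta^p=1$---reaches the same conclusion but is slightly more laborious. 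Your initial reduction to $p\equiv 1\pmod 3$ via Lemma~\ref{lemma:3k2} is harmless but unnecessary: the remainder of your argument, like the paper's, uses only $\gcd(p,3)=1$.
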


\begin{proof}
Consider the action of $G$ on $S_{\bar{\KK}}\cong\PP^2_{\bar{\KK}}$.
Consider the natural projection
$$
\pi\colon\SL_3\left(\bar{\KK}\right)\to\Aut\left(\PP^2_{\bar{\KK}}\right)
\cong\PGL_3\left(\bar{\KK}\right).
$$
Set $\tilde{G}=\pi^{-1}(G)$,
so that $\tilde{G}$ is a group of order $3p^2$.
Let $\check{G}$ be the $p$-Sylow subgroup
of~$\tilde{G}$. Then $\pi$
gives an isomorphism between $\check{G}$ and $G$.
Moreover, since $p\neq 3$,
the group $\check{G}$ does not contain non-trivial scalar matrices.
By Lemma~\ref{lemma:unique-fixed-point}
there exists an element $g\in G$
such that the set of fixed points of $g$ on $\PP^2_{\bar{\KK}}$
contains a unique isolated point.
Thus the required assertion follows from Lemma~\ref{lemma:unique-point}.
\end{proof}

\begin{corollary}\label{corollary:unique-fixed-point-3-squared}
Let $S$ be a Severi--Brauer surface over a field $\KK$ of characteristic zero
such that the group $\Aut(S)$
contains a subgroup $G\cong\mumu_3^2$.
Suppose that $G$ has at least one fixed point on
$S_{\bar{\KK}}\cong\PP^2_{\bar{\KK}}$.
Then $S\cong\PP^2$.
\end{corollary}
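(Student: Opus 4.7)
The plan is to leverage the fixed-point hypothesis to lift $G\cong\mumu_3^2$ from $\Aut(S_{\bar{\KK}})\cong\PGL_3(\bar{\KK})$ to a subgroup of $\GL_3(\bar{\KK})$ containing no non-trivial scalars; Lemma~\ref{lemma:unique-fixed-point} will then produce an element of $G$ with a unique isolated fixed point on $S_{\bar{\KK}}$, and Lemma~\ref{lemma:unique-point} will force $S\cong\PP^2$. The point of the fixed-point hypothesis is that, without it, the preimage of $G$ in $\GL_3(\bar{\KK})$ might be the non-abelian Heisenberg group of order~$27$, which acts irreducibly on $\bar{\KK}^3$ and admits no scalar-free splitting; the common fixed point is exactly what rules this possibility out.

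First I would identify $S_{\bar{\KK}}$ with $\PP^2_{\bar{\KK}}$ and pick a vector $v_P\in\bar{\KK}^3\setminus\{0\}$ representing a common $G$-fixed point $P$. For chosen generators $g_1,g_2$ of $G$, I would take arbitrary lifts $\tilde{g}_1,\tilde{g}_2\in\GL_3(\bar{\KK})$ and rescale each one by a scalar so that $\tilde{g}_i(v_P)=v_P$. Since each $g_i$ has order $3$ in $\PGL_3(\bar{\KK})$, the cube $\tilde{g}_i^{3}$ is a scalar matrix, and as it fixes $v_P$ it must equal the identity. The commutator $[\tilde{g}_1,\tilde{g}_2]$ is likewise scalar (because the images commute in $\PGL_3(\bar{\KK})$) and fixes $v_P$, hence is trivial, so $\tilde{g}_1$ and $\tilde{g}_2$ commute.

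Setting $\check{G}=\langle\tilde{g}_1,\tilde{g}_2\rangle\subset\GL_3(\bar{\KK})$ then gives an abelian group of exponent dividing~$3$, hence a quotient of $\mumu_3^2$; since it projects onto $G$ of order~$9$, the projection $\check{G}\to G$ is an isomorphism and $\check{G}$ contains no non-trivial scalar matrix. Lemma~\ref{lemma:unique-fixed-point} applied to $\check{G}$ with $p=3$ then furnishes a non-trivial element $\check{g}\in\check{G}$ whose fixed locus on $\PP^2_{\bar{\KK}}$ contains a unique isolated point. Its image $g\in G\subset\Aut(S)$ has the same fixed-point behaviour on $S_{\bar{\KK}}$, so Lemma~\ref{lemma:unique-point} applied to $g$ contradicts the non-triviality of $S$, which means $S\cong\PP^2$.

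The only delicate point is the lifting step in the second paragraph, where the existence of the common eigenvector $v_P$ is used to simultaneously trivialize both $\tilde{g}_i^3$ and the commutator $[\tilde{g}_1,\tilde{g}_2]$; once such a scalar-free lift $\check{G}$ is in hand, the two previously established lemmas do all the remaining work.
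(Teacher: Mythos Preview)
Your argument is correct and follows the same overall strategy as the paper: use the common fixed point to produce a scalar-free lift $\check{G}\cong\mumu_3^2$ inside $\GL_3(\bar{\KK})$, then apply Lemma~\ref{lemma:unique-fixed-point} and Lemma~\ref{lemma:unique-point}. The only difference is in how the lift is obtained: the paper argues that the full preimage $\tilde{G}=\pi^{-1}(G)\subset\SL_3(\bar{\KK})$ of order~$27$ must act reducibly and hence (by the dimension constraint on its irreducibles) be abelian, whereas you normalize the lifts directly at the common eigenvector $v_P$ to kill both the cubes and the commutator --- a slightly more hands-on but equally valid route to the same conclusion.
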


\begin{proof}
Consider the natural projection
$$
\pi\colon\SL_3\left(\bar{\KK}\right)\to\PGL_3\left(\bar{\KK}\right).
$$
Set $\tilde{G}=\pi^{-1}(G)$,
so that $\tilde{G}$ is a group of order $27$.
Since $G$ has a fixed point on $\PP^2_{\bar{\KK}}$, the
natural three-dimensional representation $V$ of $\tilde{G}$ is reducible.
Recall that the dimension of every irreducible representation of
$\tilde{G}$ either equals $1$, or is divisible by $3$. Hence $V$ splits
into a sum of three one-dimensional representations of $\tilde{G}$,
which means that the group $\tilde{G}$ is abelian.
Choose the elements $\tilde{g}_1$ and $\tilde{g}_2$ in $\tilde{G}$
whose images in $\PGL_3\left(\bar{\KK}\right)$ generate the group $G$.
Then $\tilde{g}_1$ and $\tilde{g}_2$ generate a subgroup
$\check{G}\cong\mumu_3^2$ in $\tilde{G}$ such that $\check{G}$
does not contain non-trivial scalar matrices.
By Lemma~\ref{lemma:unique-fixed-point}
there exists an element $g\in G$
such that the set of fixed points of $g$ on $\PP^2_{\bar{\KK}}$
contains a unique isolated point.
Thus the required assertion follows from Lemma~\ref{lemma:unique-point}.
\end{proof}

\begin{corollary}[{cf. \cite[Lemma~4.1(iii),(iv), Lemma~4.6]{Shramov-SB}}]
\label{corollary:3-squared-on-P2}
Let $S$ be a Severi--Brauer surface over a field $\KK$ of characteristic zero
such that the group $\Aut(S)$
contains a finite $3$-group~$G$. Suppose that $G$ is not
a subgroup of $\mumu_3^2$.
Then $S\cong\PP^2$.
\end{corollary}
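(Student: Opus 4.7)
The plan is to split into two cases according to the exponent of $G$. If $G$ contains an element of order~$9$, Lemma~\ref{lemma:order-9} immediately gives $S\cong\PP^2$. Otherwise $G$ has exponent~$3$, and since every subgroup of $\mumu_3^2$ has order at most~$9$, the hypothesis that $G$ is not a subgroup of $\mumu_3^2$ forces $|G|\ge 27$. I would then argue by contradiction, supposing that $S$ is non-trivial.

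Since $G$ is a non-trivial $3$-group, its center is non-trivial, so I can pick an element $z\in Z(G)$ of order~$3$. By Lemma~\ref{lemma:unique-point}, $z$ has exactly three fixed points $P_1,P_2,P_3$ on $S_{\bar{\KK}}$. Because every element of $G$ commutes with $z$, the group $G$ preserves the set $\{P_1,P_2,P_3\}$, yielding a homomorphism from $G$ to the symmetric group on this set. As $G$ is a $3$-group, its image is a $3$-subgroup of the symmetric group on three letters and hence has order at most~$3$; consequently the kernel $G_0$ has index at most~$3$ in $G$, so $|G_0|\ge|G|/3\ge 9$, and $G_0$ fixes each $P_i$ individually.

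Finally, I would extract from $G_0$ a subgroup isomorphic to $\mumu_3^2$ possessing a fixed point on $S_{\bar{\KK}}$. The element $z$ lies in $G_0$ and is central in $G$, hence in $G_0$. Since $|G_0|\ge 9>3$, there is some $h\in G_0\setminus\langle z\rangle$, and the subgroup $K=\langle z,h\rangle$ is abelian (by the centrality of $z$), has order~$9$ (because $h\notin\langle z\rangle$ while $z$ and $h$ both have order~$3$, so $\langle z\rangle\cap\langle h\rangle$ is trivial), and has exponent~$3$; hence $K\cong\mumu_3^2$. The point $P_1$ is fixed by both $z$ and $h$, so $K$ has a fixed point on $S_{\bar{\KK}}$. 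Corollary~\ref{corollary:unique-fixed-point-3-squared} then yields $S\cong\PP^2$, contradicting our standing assumption.

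The only step requiring a little care is producing the subgroup $K\cong\mumu_3^2$ inside $G_0$ that still enjoys a fixed point; this is handled by combining the centrality of $z$ with the bound $|G_0|\ge 9$. I do not anticipate any substantial obstacle beyond carefully assembling the ingredients already supplied by Lemmas~\ref{lemma:unique-point} and~\ref{lemma:order-9} and Corollary~\ref{corollary:unique-fixed-point-3-squared}.
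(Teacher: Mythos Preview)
Your argument is correct and follows the same overall strategy as the paper: dispose of the case where $G$ contains an element of order~$9$ via Lemma~\ref{lemma:order-9}, then in the exponent-$3$ case pick a central element of order~$3$, locate a copy of $\mumu_3^2$ fixing one of its three fixed points, and invoke Corollary~\ref{corollary:unique-fixed-point-3-squared}. The execution of the middle step differs. The paper first passes to a subgroup $G'$ of order exactly~$27$, chooses elements $g_1,g_2,g_3$ with $g_1$ central in $G'$, and does a short case analysis on whether $g_2$ and $g_3$ fix or cyclically permute the three fixed points of $g_1$ (if both permute, one of $g_2g_3^{\pm 1}$ must fix them all). Your kernel-of-the-permutation-action argument packages the same idea more uniformly: since the image of $G$ in the symmetric group on $\{P_1,P_2,P_3\}$ is a $3$-group, the pointwise stabilizer $G_0$ has index at most~$3$, hence order at least~$9$, and any $h\in G_0\setminus\langle z\rangle$ does the job. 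This avoids the reduction to order~$27$ and the element-by-element case split; it also makes the $\mumu_9$ case explicit at the outset, whereas the paper's opening assertion that $|G|\ge 27$ tacitly presupposes it.
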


\begin{proof}
Suppose that the Severi--Brauer surface $S$ is non-trivial.
Since $G$ is a $3$-group of order at least $27$,
it contains a subgroup $G'$ of order exactly $27$.
Furthermore, the group~$G$, and thus also $G'$,
does not contain elements of order greater than $3$
by Lemma~\ref{lemma:order-9}.
Therefore, if $G'$ is abelian, then~\mbox{$G'\cong\mumu_3^3$};
if $G'$ is non-abelian, then it is the Heisenberg group of order
$27$ and exponent $3$. In both cases $G'$ contains
a subgroup $Z\cong\mumu_3$ that commutes with every element of~$G'$.

We claim that $G'$ contains a subgroup $\mumu_3^2$ acting
on $S_{\bar{\KK}}\cong\PP^2_{\bar{\KK}}$ with a fixed point. Indeed,
let $g_1$ be a generator of $Z$,
let $g_2$ be an element of $G'$ not contained in $Z$, and
let $g_3$ be an element of $G'$ not contained in the subgroup
$\mumu_3^2$ generated by $g_1$ and $g_2$.
According to Lemma~\ref{lemma:unique-point},
the element $g_1$ has exactly three fixed points $P_1$, $P_2$, and $P_3$
on $\PP^2_{\bar{\KK}}$.
Since~$g_2$ commutes with $g_1$, it preserves the set~\mbox{$\{P_1,P_2,P_3\}$}.
Therefore, $g_2$ either fixes each of the points $P_i$,
or permutes them transitively. In the former case the group~$\mumu_3^2$
generated by $g_1$ and $g_2$
acts on $\PP^2_{\bar{\KK}}$ with fixed points.
So, we may assume that~$g_2$ defines a cyclic
permutation of the points~$P_1$,~$P_2$, and~$P_3$. Similarly,
the element~$g_3$ preserves the set~\mbox{$\{P_1,P_2,P_3\}$},
and we may assume that
it defines a cyclic
permutation of~$P_1$,~$P_2$, and~$P_3$. This means that either the element
$g_2g_3$ or the element $g_2g_3^{-1}$ preserves the points
$P_1$, $P_2$, and $P_3$. Together with
$g_1$ this element generates a group $\mumu_3^2$ acting
on~$\PP^2_{\bar{\KK}}$ with a fixed point.

Now Corollary~\ref{corollary:unique-fixed-point-3-squared}
shows that $S\cong\PP^2$.
\end{proof}

While the group $\mumu_3^2$ can act on a non-trivial Severi--Brauer
surface over a field of characteristic zero, it appears that
such an action imposes strong restrictions on other finite groups
acting on this Severi--Brauer surface.

\begin{lemma}\label{lemma:3-squared-and-p}
Let $p>3$ be a prime number,
and let $S$ be a Severi--Brauer surface
over a field $\KK$ of characteristic zero
such that the group $\Aut(S)$
contains a subgroup
$G\cong \mumu_3^2\times\mumu_p$.
Then $S\cong\PP^2$.
\end{lemma}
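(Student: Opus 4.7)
My plan is to argue by contradiction. Suppose $S$ is non-trivial, and write $G=K\times H$ with $K\cong\mumu_3^2$ and $H=\langle h\rangle\cong\mumu_p$. By Corollary~\ref{corollary:unique-fixed-point-3-squared}, if $K$ has a fixed point on $S_{\bar{\KK}}\cong\PP^2_{\bar{\KK}}$ we are done, so I may assume $K$ is fixed-point-free on $\PP^2_{\bar{\KK}}$ and derive a contradiction.

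Set $\pi\colon\SL_3(\bar{\KK})\to\PGL_3(\bar{\KK})$ and $\tilde K=\pi^{-1}(K)$, a group of order $27$. The absence of a $K$-fixed point means that the natural three-dimensional representation of $\tilde K$ has no invariant line; since every irreducible representation of a group of order $27$ over $\bar{\KK}$ has dimension $1$ or $3$ (the dimensions being powers of $3$ dividing $27$, with sum of squares equal to~$27$), this representation is in fact irreducible, and in particular $\tilde K$ is non-abelian.

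Next, exactly as in the proof of Lemma~\ref{lemma:3k2}, I would use $\gcd(p,3)=1$ to lift $h$ to a unique element $\tilde h\in\SL_3(\bar{\KK})$ of order $p$. Since $h$ commutes with $K$ inside $\PGL_3(\bar{\KK})$, for every $\tilde k\in\tilde K$ the commutator $[\tilde h,\tilde k]$ is a scalar $\zeta I$ with $\zeta^3=1$; iterating the conjugation relation $\tilde h\tilde k\tilde h^{-1}=\zeta\tilde k$ yields $\tilde h^p\tilde k\tilde h^{-p}=\zeta^p\tilde k$, and the identity $\tilde h^p=I$ forces $\zeta^p=1$, whence $\zeta=1$ by coprimality. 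Therefore $\tilde h$ centralizes $\tilde K$ in $\GL_3(\bar{\KK})$, and Schur's lemma, applied to the irreducible representation of $\tilde K$, implies that $\tilde h$ is a scalar matrix. But a scalar in $\SL_3(\bar{\KK})$ has order dividing $3$, which contradicts the fact that $\tilde h$ has order $p>3$.

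The genuinely delicate point is promoting the projective commutativity of $h$ and $K$ to literal commutativity of the chosen lifts, and this is exactly where the coprimality $\gcd(p,3)=1$ is used twice: once to pin down a canonical preimage $\tilde h$ of order $p$, and once to kill the scalar obstruction in the commutator computation. Everything else is routine representation theory of groups of order $27$ together with Schur's lemma.
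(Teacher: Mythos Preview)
Your proof is correct but takes a different route from the paper's. The paper argues by fixed-point counting: each of the four cyclic subgroups $\mumu_3\subset K$ has exactly three fixed points on $S_{\bar{\KK}}$ by Lemma~\ref{lemma:unique-point}, and these twelve points are pairwise distinct (a coincidence would yield a $K$-fixed point, forcing $S\cong\PP^2$ by Corollary~\ref{corollary:unique-fixed-point-3-squared}); since $H\cong\mumu_p$ commutes with~$K$ it stabilises each triple, and $p>3$ then forces $H$ to fix all twelve points individually, contradicting Lemma~\ref{lemma:unique-point}. You instead pass to $\SL_3(\bar{\KK})$, show that the absence of a $K$-fixed point makes the three-dimensional representation of $\tilde K$ irreducible, lift $h$ and kill the scalar commutator obstruction using $\gcd(p,3)=1$, and then invoke Schur's lemma --- precisely the mechanism the paper itself deploys later in the proof of Lemma~\ref{lemma:bad-semi-direct-product}. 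The paper's argument stays entirely at the level of fixed loci and makes the role of the inequality $p>3$ transparent as an orbit-size obstruction; yours is more algebraic and shows in one stroke that \emph{no} non-trivial element of order coprime to~$3$ can centralise a fixed-point-free copy of $\mumu_3^2$ inside $\PGL_3(\bar{\KK})$.
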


\begin{proof}
Suppose that the Severi--Brauer surface $S$ is non-trivial.
The group $\mumu_3^2\subset G$ contains four subgroups isomorphic to $\mumu_3$.
According to Lemma~\ref{lemma:unique-point},
each of these subgroups has exactly three fixed
points on~$S_{\bar{\KK}}$. Furthermore, none of the points fixed by two
different subgroups $\mumu_3$ can coincide with each
other, since otherwise $S\cong\PP^2$
by Corollary~\ref{corollary:unique-fixed-point-3-squared}.
On the other hand, since the group $\mumu_p\subset G$ commutes with
$\mumu_3^2$, it maps the fixed points of every element $g$ of $\mumu_3^2$
again to points fixed by~$g$. Therefore, $\mumu_p$ has four distinct
invariant sets of three points on $S_{\bar{\KK}}\cong\PP^2_{\bar{\KK}}$.
Since $p>3$, each of the $12$ points in the union
of these sets is $\mumu_p$-invariant. This is impossible
by Lemma~\ref{lemma:unique-point}.
\end{proof}

The next lemma shows that non-trivial non-balanced semidirect products
do not appear as subgroups of automorphism groups of Severi--Brauer
surfaces over fields of characteristic zero.

\begin{lemma}[{cf. Remark~\ref{remark:not-attained-non-balanced}}]
\label{lemma:bad-semi-direct-product}
Let $n$ be a positive integer divisible only by primes congruent to $1$
modulo $3$, and let $G$ be a
semidirect product of $\mumu_n$ and $\mumu_3$.
Let~$\bar{\KK}$ be an algebraically closed field of characteristic zero.
Suppose that $G\subset\PGL_3\left(\bar{\KK}\right)$.
Then~$G$ is either isomorphic to $\mumu_n\times\mumu_3$,
or is a balanced semidirect product.
\end{lemma}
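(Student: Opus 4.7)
The plan is to analyze how $G$ sits inside $\PGL_3(\bar{\KK})$ by diagonalizing a generator of $\mumu_n$. Let $\alpha \in G$ generate the $\mumu_3$-factor, let $\xi$ generate $\mumu_n$, and write the semidirect relation as $\alpha^{-1}\xi\alpha = \xi^d$, where $d^3 \equiv 1 \pmod n$. Since $\gcd(n,3)=1$ (because $n$ is divisible only by primes $\equiv 1 \pmod 3$), I can lift $\xi$ to an element $\tilde\xi \in \GL_3(\bar{\KK})$ of order exactly $n$ and $\alpha$ to an element $\tilde\alpha$ of order $3$. If $d \equiv 1 \pmod n$, the semidirect product is trivial, giving $G \cong \mumu_n \times \mumu_3$, one of the permitted outcomes.

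From now on assume $d \not\equiv 1 \pmod n$. Diagonalize $\tilde\xi$ in a suitable basis with eigenvalues $\eta_1, \eta_2, \eta_3$ ($n$-th roots of unity). The conjugation relation $\tilde\alpha^{-1}\tilde\xi\tilde\alpha = \mu\, \tilde\xi^d$ in $\GL_3$ (valid for some scalar $\mu$) implies that $\tilde\alpha$ sends the eigenspace $V_\eta$ of $\tilde\xi$ to $V_{\mu\eta^d}$. A case analysis on the multiplicities rules out every configuration except three distinct eigenvalues: if $\tilde\xi$ is scalar then $n = 1$; and if $\tilde\xi$ has a $2$-dimensional eigenspace $V_{\eta_1}$ and a $1$-dimensional eigenspace $V_{\eta_2}$, then dimension matching forces $\mu\eta_i^d = \eta_i$ for $i=1,2$, whence $(\eta_1/\eta_2)^{d-1}=1$; but the order of $\eta_1/\eta_2$ equals the order $n$ of $\xi$ in $\PGL_3$, yielding $d \equiv 1 \pmod n$, a contradiction. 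So $\tilde\xi$ has three distinct eigenvalues and $\tilde\alpha$ cyclically permutes the three eigenlines. After rescaling the basis, I may take $\tilde\alpha$ to be the cyclic permutation matrix sending $e_1 \mapsto e_2 \mapsto e_3 \mapsto e_1$, with $\tilde\xi = \mathrm{diag}(\eta_1, \eta_2, \eta_3)$.

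It remains to show that $G$ is balanced. Suppose otherwise; by Lemma~\ref{lemma:non-balanced-structure} there exists $t = \xi^k$ with $0 < k < n$ commuting with $\alpha$ in $G$, hence in $\PGL_3$. Then $\tilde\alpha\, \tilde\xi^k\, \tilde\alpha^{-1} = \lambda\, \tilde\xi^k$ for some scalar $\lambda$; comparing diagonal entries in the distinguished basis yields $\eta_3^k = \lambda\eta_1^k$, $\eta_1^k = \lambda\eta_2^k$, and $\eta_2^k = \lambda\eta_3^k$, forcing $\lambda^3 = 1$. If $\lambda = 1$, then all three diagonal entries of $\tilde\xi^k$ coincide, so $\xi^k = 1$ in $\PGL_3$, contradicting $0 < k < n$ and $\mathrm{ord}(\xi) = n$. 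If $\lambda$ is a primitive cube root of unity, then $(\eta_2/\eta_1)^k = \lambda^{-1}$ has order $3$, so the order of $\eta_2/\eta_1$ in $\bar\KK^*$ is divisible by $3$. But $\eta_2/\eta_1$ is an $n$-th root of unity, hence its order divides $n$; thus $3 \mid n$, contradicting the assumption that $n$ is divisible only by primes congruent to $1$ modulo $3$.

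I expect the main obstacle to be the eigenspace case analysis in the second paragraph: one must carefully handle the possibility of repeated eigenvalues for $\tilde\xi$ and invoke the fact that the order of $\xi$ in $\PGL_3$ equals $n$ to eliminate the $2{+}1$ configuration when $d \not\equiv 1 \pmod n$. Once the structure of $\tilde\alpha$ as a cyclic permutation of three distinct eigenlines is secured, the final contradiction is a clean computation driven by the arithmetic condition $3 \nmid n$.
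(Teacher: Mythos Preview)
Your argument is correct, but it proceeds quite differently from the paper's. The paper does not diagonalize at all: assuming $G$ is neither the direct product nor balanced, it invokes Lemma~\ref{lemma:non-balanced-structure} to write $G\cong G_1\times\mumu_{n_2}$ with $G_1\cong\mumu_{n_1}\rtimes\mumu_3$ non-abelian and $n_2>1$, lifts $G_1$ to $\tilde G_1=\pi^{-1}(G_1)\subset\SL_3(\bar\KK)$, and observes that $\tilde G_1$ is non-abelian of odd order, so the tautological three-dimensional representation is irreducible. A lift $\tilde g$ of a generator of $\mumu_{n_2}$, chosen of order $n_2$, is then shown to commute with $\tilde G_1$ (commutators lie in the scalar $\mumu_3$ but have order dividing $n_2$, hence are trivial), and Schur's lemma forces $\tilde g$ to be scalar, a contradiction.

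Your route is more hands-on: you pin down $\tilde\alpha$ as a cyclic permutation of three distinct eigenlines and then derive a direct arithmetic contradiction from $3\nmid n$. This avoids the (mildly non-obvious) fact that a non-abelian group of odd order has no two-dimensional irreducible complex representation, which underlies the paper's irreducibility claim. Conversely, the paper's Schur-lemma argument sidesteps the eigenvalue case analysis entirely. One small point you should make explicit: in the three-distinct-eigenvalue case you assert that $\tilde\alpha$ permutes the eigenlines \emph{cyclically}; this is because the induced permutation has order dividing $3$, and the identity permutation would make $\tilde\alpha$ diagonal and hence commute with $\tilde\xi$, forcing $d\equiv 1\pmod n$ by the same computation you used in the $2{+}1$ case.
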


\begin{proof}
Assume that $G$ is a non-balanced semidirect product.
By Lemma~\ref{lemma:non-balanced-structure}, for some positive integers
$n_1$ and $n_2$ such that $n_1n_2=n$ and~\mbox{$n_2>1$}
there is an isomorphism
$$
G\cong G_1\times\mumu_{n_2},
$$
where
$G_1\cong \mumu_{n_1}\rtimes\mumu_3$.
Let us also assume that~$G$ is not isomorphic to $\mumu_n\times\mumu_3$.
Then $G$, and thus also $G_1$, is not abelian; in particular, we have $n_1>1$.

Consider the projection
$$
\pi\colon \SL_3\left(\bar{\KK}\right)\to\PGL_3\left(\bar{\KK}\right),
$$
and set $\tilde{G}_1=\pi^{-1}(G_1)$.
The group $\tilde{G}_1$ is not abelian, and
its order $3n_1$ is not divisible by~$2$.
Thus the natural three-dimensional representation of $\tilde{G}_1$
is irreducible.

Let $g$ be a generator of the subgroup $\mumu_{n_2}\subset G$,
and choose a preimage
$\tilde{g}$ of $g$ with respect to $\pi$.
Since the order $n_2$ of $g$
is coprime to $3$, we can multiply $\tilde{g}$ by
an appropriate scalar matrix and assume that
the order of $\tilde{g}$ equals $n_2$ as well.
We claim that $\tilde{g}$ commutes with $\tilde{G}_1$.
Indeed, let $\gamma$ be an element of $\tilde{G}_1$.
Since~\mbox{$\pi(\gamma)\in G_1$} commutes with $\pi(\tilde{g})=g$,
the commutator $\kappa=\tilde{g}\gamma\tilde{g}^{-1}\gamma^{-1}$
is a scalar matrix in~\mbox{$\SL_3\left(\bar{\KK}\right)$}.
On the other hand, $\tilde{g}^{n_2}$ is the identity
matrix, so it commutes with $\gamma$. Writing
$$
\tilde{g}^{n_2}\gamma=\kappa^{n_2}\gamma\tilde{g}^{n_2},
$$
one concludes that $\kappa^{n_2}$ is the identity matrix, and thus
$\kappa$ is the identity matrix itself.

We see that $\tilde{g}$ commutes with $\tilde{G}_1$.
Therefore, by Schur's lemma $\tilde{g}$ acts in the three-dimensional
representation by a scalar matrix. This is impossible, because
the image $g$ of~$\tilde{g}$ in $\PGL_3\left(\bar{\KK}\right)$ is non-trivial.
The obtained contradiction shows that $G\cong\mumu_n\times\mumu_3$.
\end{proof}

\begin{remark}
For an alternative proof of Lemma~\ref{lemma:bad-semi-direct-product},
consider two elements~\mbox{$g_1, g_2\in \PGL_3\left(\bar{\KK}\right)$}
such that the orders of $g_i$ are finite, at least one of the orders is
greater than $3$, and the fixed point locus $\Fix(g_i)$
of each $g_i$ consists of
three distinct points. One can notice that $g_1$ and $g_2$
commute with each other
if and only if~\mbox{$\Fix(g_1)=\Fix(g_2)$}.
Applying this criterion to the generators $g_2$
and $g$ of the subgroups~$\mumu_{n_2}$ and~$\mumu_3$ of $G$ appearing
in~\eqref{eq:non-balanced-structure} (and assuming that $\Fix(g_2)$
and $\Fix(g)$
consist of three points), and then to $g$ and a generator
of the subgroup $\mumu_n$ of~$G$, we conclude that $G$ is isomorphic to
a direct product of $\mumu_n$ and $\mumu_3$.
The case when the fixed locus of some of the elements $g_2$ or $g$
consists of a line and an isolated point is
also easy to analyze.
\end{remark}

Let us summarize the above results.

\begin{corollary}\label{corollary:summarize-subgroups}
There exists a field $\KK$ of characteristic zero and
a non-trivial Severi--Brauer surface over $\KK$ such that the group
$\Aut(S)$  contains a finite subgroup isomorphic to $G$ if and only if
there is a positive integer $n$ divisible only by primes congruent to $1$
modulo $3$ such that $G$
is isomorphic either to $\mumu_n$, or to $\mumu_{3n}$, or to a balanced
semidirect product $\mumu_n\rtimes\mumu_3$, or to
the direct product $\mumu_3\times(\mumu_n\rtimes\mumu_3)$,
where the semidirect product is balanced.
\end{corollary}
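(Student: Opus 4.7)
The plan is to assemble the ``if'' direction from the constructions already given in this section, and to establish the ``only if'' direction by combining the dichotomy of Theorem~\ref{theorem:J-3} with the Sylow-type restrictions from Sections~\ref{section:prime} and~\ref{section:subgroups}. The balance condition will enter through Lemma~\ref{lemma:bad-semi-direct-product}, which is the main structural input.

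For the ``if'' direction, groups of the form $\mumu_n$ and $\mumu_{3n}$ with $n$ divisible only by primes congruent to $1$ modulo $3$ are obtained from Corollary~\ref{corollary:elements-attained} by taking the cyclic subgroup generated by an element of the appropriate order. Balanced semidirect products $\mumu_n\rtimes\mumu_3$ with $n>1$ are handled by Lemma~\ref{lemma:attained-n}, and the degenerate case $n=1$ collapses to $\mumu_3$, which is already covered. The direct products $\mumu_3\times(\mumu_n\rtimes\mumu_3)$ with $n>1$ and balanced semidirect factor come from Lemma~\ref{lemma:attained-3n}; the case $n=1$ recovers $\mumu_3^2$, which was constructed in Example~\ref{example:cyclic-algebra}.

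For the ``only if'' direction, let $G\subset\Aut(S)$ be a finite subgroup with $S$ a non-trivial Severi--Brauer surface. First I would extract the basic numerical constraints: by Corollary~\ref{corollary:order} every element of $G$ has order of the form $3^r\prod p_i^{r_i}$ with $p_i\equiv 1\pmod 3$ and $r\le 1$; by Corollary~\ref{corollary:p-squared-on-P2} every Sylow $p$-subgroup for $p>3$ is cyclic; and by Corollary~\ref{corollary:3-squared-on-P2} the Sylow $3$-subgroup of $G$ embeds into $\mumu_3^2$. Now split according to Theorem~\ref{theorem:J-3}. In the abelian case, $G$ is the direct product of its Sylow subgroups; if the Sylow $3$-subgroup equals $\mumu_3^2$, Lemma~\ref{lemma:3-squared-and-p} forces $|G|$ to have no prime divisor greater than $3$, so $G\cong\mumu_3^2$, and otherwise $G$ is cyclic of order $n$ or $3n$ with the required shape of $n$.

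If $G$ is non-abelian, let $H\triangleleft G$ be an abelian subgroup of index~$3$; Lemma~\ref{lemma:is-semidirect} then gives $G\cong H\rtimes\mumu_3$. Applying the abelian classification to $H\subset\Aut(S)$, and observing that $H\cong\mumu_3^2$ would force $G$ to be a non-abelian $3$-group of order $27$ in contradiction with Corollary~\ref{corollary:3-squared-on-P2}, we obtain either $H\cong\mumu_n$ or $H\cong\mumu_3\times\mumu_n$ for some $n$ divisible only by primes congruent to $1$ modulo $3$. When $H\cong\mumu_n$, Lemma~\ref{lemma:bad-semi-direct-product} applied to the image of $G$ in $\PGL_3(\bar{\KK})$, combined with the non-abelianness of $G$, shows that $G\cong\mumu_n\rtimes\mumu_3$ is a balanced semidirect product. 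When $H\cong\mumu_3\times\mumu_n$, the complementing $\mumu_3$ acts trivially on the $\mumu_3$-factor of $H$ (since $\Aut(\mumu_3)=\mumu_2$ has order prime to $3$), so $G\cong\mumu_3\times(\mumu_n\rtimes\mumu_3)$; the factor $\mumu_n\rtimes\mumu_3$ is non-abelian (otherwise $G$ would be abelian) and embeds into $\Aut(S)$, so the previous case applied to it yields that this semidirect product is balanced. The main obstacle is verifying the balance condition in the non-abelian analysis, which is exactly what Lemma~\ref{lemma:bad-semi-direct-product} achieves by passing to $\PGL_3(\bar{\KK})$; the remainder is Sylow bookkeeping together with careful use of the exclusion lemmas for $\mumu_3^2\times\mumu_p$ and for large $3$-groups.
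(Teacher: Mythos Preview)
Your proposal is correct and follows essentially the same route as the paper: the ``if'' direction via Corollary~\ref{corollary:elements-attained} and Lemmas~\ref{lemma:attained-n}, \ref{lemma:attained-3n}, and the ``only if'' direction by splitting into abelian/non-abelian via Theorem~\ref{theorem:J-3}, then invoking Corollaries~\ref{corollary:p-squared-on-P2}, \ref{corollary:3-squared-on-P2}, Lemma~\ref{lemma:3-squared-and-p}, Lemma~\ref{lemma:is-semidirect}, and Lemma~\ref{lemma:bad-semi-direct-product}. The only cosmetic difference is that in the case $H\cong\mumu_3\times\mumu_n$ you justify balancedness of the factor $\mumu_n\rtimes\mumu_3$ by reapplying the previous case to this subgroup of $\Aut(S)$, whereas the paper invokes Lemma~\ref{lemma:bad-semi-direct-product} directly; both are valid.
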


\begin{proof}
The ``if'' part of the assertion is given by
Corollary~\ref{corollary:elements-attained}
and Lemmas~\ref{lemma:attained-n} and \ref{lemma:attained-3n}.
Let us prove the ``only if'' part.
Let $S$ be a non-trivial Severi--Brauer surface over a field
of characteristic zero, and let $G$ be a finite subgroup of $\Aut(S)$.

To start with, assume that $G$ is abelian.
Then $G$ is isomorphic to the product of its $p$-Sylow subgroups $G(p)$
for all $p$ dividing $|G|$.
By Corollary~\ref{corollary:p-squared-on-P2},
for $p\neq 3$ the group~$G(p)$ is cyclic.
By Corollary~\ref{corollary:3-squared-on-P2},
the group $G(3)$ is isomorphic to
a subgroup of $\mumu_3^2$. Moreover, if~\mbox{$G(3)\cong\mumu_3^2$},
then all the
groups $G(p)$ for $p\neq 3$ are trivial by
Lemma~\ref{lemma:3-squared-and-p}. We conclude that~$G$ is isomorphic
either to $\mumu_3^2$,
or to $\mumu_n$, or to $\mumu_{3n}$, where $n\ge 1$
is the product of~\mbox{$|G(p)|$}
for all prime
divisors of $|G|$ different from $3$.
In the former case,
we can consider~\mbox{$G\cong\mumu_3^2$} as a product of $\mumu_3$ and
a balanced semidirect product $\mumu_1\rtimes\mumu_3\cong\mumu_3$.
In the latter two cases,
all prime divisors $p$ of $n$ are congruent to $1$ modulo $3$ by
Lemma~\ref{lemma:3k2}. This proves the required assertion in the case
when $G$ is abelian.

Now assume that $G$ is not abelian. By Theorem~\ref{theorem:J-3},
the group $G$ contains a normal abelian subgroup $H$ of index $3$.
Since $G$ does not contain elements of order $9$ by Lemma~\ref{lemma:order-9},
one has~\mbox{$G\cong H\rtimes\mumu_3$} by Lemma~\ref{lemma:is-semidirect}.
According to the above argument,
$H$ is isomorphic
either to $\mumu_3^2$,
or to $\mumu_n$, or to $\mumu_{3n}$, where $n$ is divisible only by
primes congruent to $1$ modulo~$3$.
In the first case~\mbox{$|G|=3|H|=27$}, which
is impossible by
Corollary~\ref{corollary:3-squared-on-P2}.
In the second case we have $G\cong\mumu_n\rtimes\mumu_3$,
and $G$ is not isomorphic to $\mumu_n\times\mumu_3$ because it is non-abelian.
Thus $G$ is a balanced semidirect product by
Lemma~\ref{lemma:bad-semi-direct-product}.
In the third case
\begin{equation}\label{eq:G-cong-3n3}
G\cong\mumu_{3n}\rtimes\mumu_3\cong(\mumu_3\times\mumu_n)\rtimes\mumu_3
\cong\mumu_3\times (\mumu_n\rtimes\mumu_3),
\end{equation}
because there are no non-trivial homomorphisms from $\mumu_3$ to
$\mumu_3^*\cong\mumu_2$. Furthermore, the
semidirect product $\mumu_n\rtimes\mumu_3$
on the right hand side of~\eqref{eq:G-cong-3n3}
is not a direct product, because the group $G$ is non-abelian.
Hence this semidirect product is balanced by
Lemma~\ref{lemma:bad-semi-direct-product}. This completes the proof
in the case when $G$ is non-abelian.
\end{proof}

The following assertion is implied by the results of~\cite{Shramov-SB}
and~\cite{Shramov-Cubics}.

\begin{proposition}\label{proposition:cubic}
Let $S$ be a non-trivial Severi--Brauer surface over a field of characteristic
zero, and let $G$ be a finite subgroup of $\Bir(S)$. Then either
$G$ is isomorphic to a subgroup of $\Aut(S)$, or $G$ is a subgroup
of~$\mumu_3^3$. Furthermore, there exists a non-trivial Severi--Brauer
surface over a field of characteristic zero whose birational
automorphism group contains a subgroup isomorphic to~$\mumu_3^3$.
\end{proposition}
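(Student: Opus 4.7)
The plan is to reduce to a $G$-equivariant Mori fiber space and then invoke classification results from \cite{Shramov-SB} and \cite{Shramov-Cubics}. Given a finite subgroup $G\subset\Bir(S)$, one applies the $G$-equivariant minimal model program to produce a smooth projective surface $S'$ with a regular $G$-action, $G$-equivariantly birational to $S$, which is a $G$-Mori fiber space. Since $S_{\bar{\KK}}\cong\PP^2_{\bar{\KK}}$, the surface $S'_{\bar{\KK}}$ is rational, so $S'$ is either a $G$-del Pezzo surface or a $G$-conic bundle over a curve of genus zero.

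The next step is to narrow the possibilities for $S'$. A conic bundle structure forces $S'$, and hence $S$, to acquire $\KK$-points or low-degree zero-cycles that are incompatible with the non-triviality of $S$ (recall that a Severi--Brauer surface is trivial as soon as it has a $\KK$-point); del Pezzo surfaces of most degrees are similarly excluded because their Picard ranks and anticanonical models force rational points. The content of \cite{Shramov-SB} is to carry out this analysis and conclude that, apart from $S$ itself, the only $G$-Mori fiber space $G$-birational to a non-trivial Severi--Brauer surface $S$ is a smooth cubic surface. One then invokes \cite{Shramov-Cubics}, which classifies finite automorphism groups of such cubic surfaces and shows that any such $G$ that is not already a subgroup of $\Aut(S)$ must embed into $\mumu_3^3$. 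Combining these two inputs gives the dichotomy $G\subset\Aut(S)$ or $G\subset\mumu_3^3$.

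For the existence part, the plan is to construct an explicit diagonal cubic. Choose a number field $\KK$ containing a primitive cubic root of unity $\omega$, and consider a surface $X\subset\PP^3_\KK$ of the form
$$
a_0 x_0^3+a_1 x_1^3+a_2 x_2^3+a_3 x_3^3=0,
$$
with constants $a_i\in\KK^*$ to be chosen. The group $\mumu_3^3\subset\PGL_4(\KK)$ acts on $X$ by independently scaling three of the coordinates by cubic roots of unity, and this action is defined over $\KK$ because $\omega\in\KK$. By arranging the coefficients $a_i$ so that $X$ contains a Galois-invariant triple of pairwise skew lines defined over a cubic extension $\LL/\KK$ but no $\KK$-rational line, one can blow down these three lines to obtain a smooth surface $S$ that becomes $\PP^2$ over $\LL$; $S$ is a non-trivial Severi--Brauer surface over $\KK$. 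The $\mumu_3^3$-action on $X$ descends to a birational action on $S$, giving the desired embedding $\mumu_3^3\subset\Bir(S)$.

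The main obstacle lies in the classification step: ruling out the numerous a priori possibilities for $S'$ among conic bundles and del Pezzo surfaces of various degrees, and then showing that on the surviving cubic-surface model the only extra finite groups beyond $\Aut(S)$ are subgroups of $\mumu_3^3$. These are exactly the results which must be borrowed wholesale from \cite{Shramov-SB} and \cite{Shramov-Cubics}; the construction of the example is a comparatively direct matter of writing down a diagonal cubic over a carefully chosen number field.
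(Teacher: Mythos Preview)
Your overall strategy coincides with the paper's: both defer the substance to \cite{Shramov-SB} and \cite{Shramov-Cubics}. However, there are two genuine gaps in your outline.

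\textbf{The opposite Severi--Brauer surface.} Your classification of $G$-Mori fibre spaces omits one case that actually occurs: the terminal model can be the \emph{opposite} Severi--Brauer surface $S^{\op}$, i.e.\ the surface associated with the central simple algebra $A^{\op}$. In general $S^{\op}$ is birational but not isomorphic to $S$, so landing on $S^{\op}$ only gives $G\subset\Aut(S^{\op})$, not $G\subset\Aut(S)$. This is exactly the content the paper extracts from \cite[Proposition~3.7]{Shramov-SB}: the trichotomy is $G\subset\mumu_3^3$, $G\subset\Aut(S)$, or $G\subset\Aut(S^{\op})$. An additional step, namely \cite[Corollary~4.5]{Shramov-SB}, is then needed to show that every finite subgroup of $\Aut(S^{\op})$ is isomorphic to one of $\Aut(S)$. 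Your summary ``apart from $S$ itself, the only $G$-Mori fibre space \dots\ is a smooth cubic surface'' is therefore incorrect, and without handling $S^{\op}$ the dichotomy does not follow. (Relatedly, the $\mumu_3^3$ conclusion is already packaged inside \cite[Proposition~3.7]{Shramov-SB}; the paper uses \cite{Shramov-Cubics} only for the existence statement.)

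\textbf{The existence construction.} Contracting a Galois-stable triple of pairwise skew lines on a smooth cubic surface does \emph{not} yield a Severi--Brauer surface: it yields a del Pezzo surface of degree~$6$. A cubic surface is a blow-up of $\PP^2$ in six points, so to reach a Severi--Brauer surface you must contract a Galois-stable \emph{sixer}, i.e.\ six pairwise skew lines. Your sentence ``blow down these three lines to obtain a smooth surface $S$ that becomes $\PP^2$ over $\LL$'' is thus wrong as stated. The idea of using a diagonal cubic with $\omega\in\KK$ to exhibit the $\mumu_3^3$-action is correct and is essentially what \cite[Theorem~1.2]{Shramov-Cubics} does, but the passage back to a Severi--Brauer surface must go through a sixer (or, equivalently, through two successive triples), and one must check that the $\mumu_3^3$-action does not regularise to $\Aut(S)$ along the way.
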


\begin{proof}
By \cite[Proposition~3.7]{Shramov-SB},
the group $G$ is either a subgroup of $\mumu_3^3$, or is isomorphic to 
a subgroup of $\Aut(S)$, or is isomorphic to
a subgroup of $\Aut(S^{op})$, where~$S^{op}$ is the Severi--Brauer surface corresponding to the  
central simple algebra opposite to the one corresponding to~$S$.
In the latter case $G$ is also  isomorphic to
a subgroup of~\mbox{$\Aut(S)$} by \cite[Corollary~4.5]{Shramov-SB}.
A Severi--Brauer surface $S$ with $\mumu_3^3\subset\Bir(S)$ exists by~\mbox{\cite[Theorem~1.2]{Shramov-Cubics}}.
\end{proof}

Now we are ready to prove our main result.

\begin{proof}[Proof of Theorem~\ref{theorem:main}]
Assertion~(i) is given by Corollary~\ref{corollary:order}
and Proposition~\ref{proposition:cubic}.
Assertion~(ii) is given by Corollary~\ref{corollary:summarize-subgroups}.
Assertion~(iii) follows from Proposition~\ref{proposition:cubic}.
\end{proof}

\begin{remark}
There is a number of results on finite subgroups of multiplicative groups of division algebras, starting from the classical
papers \cite{Herstein} and \cite{Amitsur}.
They can hardly be used to conclude
anything about automorphism groups of Severi--Brauer varieties, since
a finite group in the quotient $A^*/\KK^*$ is not necessarily an image of a finite subgroup of $A^*$. However, every finite
subgroup in $A^*$ projects to a finite subgroup of $A^*/\KK^*$.
It would be interesting to find out which results on finite subgroups of multiplicative groups of division algebras
can be recovered from geometric properties of Severi--Brauer varieties.
\end{remark}

\section{Field of rational numbers}
\label{section:Q}

In this section  we discuss finite
groups acting by birational automorphisms of non-trivial Severi--Brauer
surfaces over~$\QQ$ and prove Corollary~\ref{corollary:Q}.

It is well known that the group $\PGL_3(\QQ)$ does not contain
elements of prime order~\mbox{$p\ge 5$},
see e.g.~\mbox{\cite[\S1]{DolgachevIskovskikh-prime}}.
The following assertion is a generalization of this result for
automorphism groups of Severi--Brauer surfaces.

\begin{lemma}
\label{lemma:7-over-Q}
Let $S$ be a Severi--Brauer surface over the field $\QQ$.
Then the group~\mbox{$\Aut(S)$}
does not contains elements of prime order $p\ge 5$.
\end{lemma}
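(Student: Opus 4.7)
The plan is to analyze the minimal polynomial over $\QQ$ of a lift of $g$ to the underlying central simple algebra. By Lemma~\ref{lemma:SB-Aut}, we have $\Aut(S) \cong A^*/\QQ^*$, where $A$ is the nine-dimensional central simple algebra over $\QQ$ corresponding to $S$. If $g \in \Aut(S)$ has prime order $p \ge 5$, I would pick a representative $x \in A^*$, set $c = x^p \in \QQ^*$, and study the minimal polynomial $m(T) \in \QQ[T]$ of $x$, noting that $m(T)$ divides $T^p - c$ and that $x \notin \QQ^*$ since $g$ is non-trivial.

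The first key step is the bound $\deg m(T) \le 3$. By Lemma~\ref{lemma:matrix-or-division}, the algebra $A$ is either $M_3(\QQ)$ or a central division algebra of degree $3$ over $\QQ$. In the former case the bound is Cayley--Hamilton; in the latter, $\QQ[x]$ is a commutative subfield of $A$, and since every subfield of a central division algebra embeds in a maximal subfield of degree $3$ over the center, one has $[\QQ[x]:\QQ] \in \{1,3\}$, hence $\deg m(T) \in \{1, 3\}$.

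Next, I would factor $T^p - c$ over $\QQ$ via the classical fact that for an odd prime $p$, either $c$ is not a $p$-th power in $\QQ$ and $T^p - c$ is irreducible of degree $p$, or $c = d^p$ for some $d \in \QQ^*$ and $T^p - c = (T - d)\Phi(T)$, with $\Phi(T) = T^{p-1} + dT^{p-2} + \cdots + d^{p-1}$. Irreducibility of $\Phi(T)$ follows from Theorem~\ref{theorem:cyclotomic}: its roots $d\zeta, \ldots, d\zeta^{p-1}$ form a single $\Gal(\QQ(\zeta)/\QQ) \cong \mumu_p^*$-orbit, where $\zeta$ is a primitive $p$-th root of unity. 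Because $T^p - c$ is separable in characteristic zero, $m(T)$ is a square-free product of irreducible factors of $T^p - c$.

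For $p \ge 5$ the possible candidates for $m(T)$ are $T^p - c$, $(T-d)\Phi(T)$, $\Phi(T)$, and $T - d$, of degrees $p$, $p$, $p-1$, and $1$ respectively; only the last fits the bound $\deg m(T) \le 3$, but $m(T) = T - d$ forces $x = d \in \QQ^*$, contradicting non-triviality of $g$. The main subtlety is the matrix-algebra case, where $\QQ[x]$ need not be a field, so one must enumerate all square-free factorizations of $T^p - c$ rather than restrict to irreducible minimal polynomials; once this is taken into account, the argument is uniform in both cases.
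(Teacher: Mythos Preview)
Your argument is correct and is essentially the paper's second proof via central simple algebras: lift $g$ to $x\in A^*$ with $x^p\in\QQ^*$, bound the degree of the minimal polynomial of $x$ by $3$, and compare with the factorization of $T^p-c$ over~$\QQ$. The only differences are cosmetic: the paper disposes of the case $S\cong\PP^2$ by citing the classical fact about $\PGL_3(\QQ)$ and then works in the division algebra, whereas you treat both cases uniformly via Cayley--Hamilton; and the paper finishes by exhibiting three roots $\xi_ic$ forming a Galois orbit and invoking transitivity on primitive $p$-th roots, while you reach the same contradiction more directly by noting that every irreducible factor of $T^p-c$ other than $T-d$ has degree at least $p-1\ge 4$.
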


We give two slightly different proofs of Lemma~\ref{lemma:7-over-Q}.
The first of them uses the same approach as the proofs of
Lemmas~\ref{lemma:3k2} and~\ref{lemma:order-9}.

\begin{proof}[First proof of Lemma~\ref{lemma:7-over-Q}]
Suppose that there is an element $g\in\Aut(S)$ of order $p\ge 5$.
We may assume that the Severi--Brauer surface $S$ is non-trivial. 
Consider the action of $g$ on $S_{\bar{\QQ}}\cong\PP^2_{\bar{\QQ}}$.
Let $\tilde{g}$ be a preimage of $g$ under the natural projection
$$
\pi\colon\SL_3\left(\bar{\QQ}\right)\to\PGL_3\left(\bar{\QQ}\right).
$$
Since $p$ is not divisible by $3$, we can multiply $\tilde{g}$ by
an appropriate scalar matrix so that the order of $\tilde{g}$ equals $p$.
Moreover, the element~$\tilde{g}$ of order $p$ such
that $\pi(\tilde{g})=g$ is unique
in~$\SL_3\left(\bar{\QQ}\right)$, and
hence it is defined over the field~$\QQ$.

Let $\xi_1$, $\xi_2$, and $\xi_3$ be the eigen-values of $\tilde{g}$;
these are $p$-th roots of unity.
According to Lemma~\ref{lemma:unique-point}, the element $g$
has exactly three fixed points on $\PP^2_{\bar{\QQ}}$, and these points
are transitively permuted by the group $\Gal\left(\bar{\QQ}/\QQ\right)$.
Thus the roots of unity $\xi_1$, $\xi_2$, and~$\xi_3$
are pairwise different and form a $\Gal\left(\bar{\QQ}/\QQ\right)$-orbit.
However, the group $\Gal\left(\bar{\QQ}/\QQ\right)$
acts transitively on the $p-1$ primitive roots of unity of degree $p$ by
Theorem~\ref{theorem:cyclotomic}. Since~\mbox{$p-1>3$},
this gives a contradiction.
\end{proof}

The second proof uses the properties of central simple algebras.

\begin{proof}[Second proof of Lemma~\ref{lemma:7-over-Q}]
Suppose that there is an element $g\in\Aut(S)$ of order $p\ge 5$.
We may assume that the Severi--Brauer surface $S$ is non-trivial. 
Let $A$ be the central simple algebra of dimension $9$ over $\QQ$
corresponding to the surface $S$.
By Lemma~\ref{lemma:SB-Aut} there exists an element $x\in A^*$
such that~\mbox{$x\not\in\QQ$} and $x^p=b$ for some $b\in\QQ$.

Since $S$ is a non-trivial Severi--Brauer surface, $A$ is a division algebra
by Lemma~\ref{lemma:matrix-or-division}. 
Thus $x$ generates a
subfield $\LL\supsetneq\QQ$ inside $A$.
The field $\LL$ is contained in some maximal subfield
of $A$. On the other hand, any maximal subfield of $A$
has degree $3$ over~$\QQ$, see~\mbox{\cite[\S\,VIII.10.3]{Bourbaki}}.
Hence $\LL$ also has
degree $3$ over~$\QQ$ (and is maximal itself).
Therefore, there exists an irreducible polynomial $F(t)$
of degree~$3$ over~$\QQ$ such that~$x$ is a root of~$F(t)$.
Thus $F(t)$ divides the polynomial~\mbox{$t^p-b$}.
Note that if $b$ is not a $p$-th power of a rational number,
then the polynomial $t^p-b$ is irreducible over~$\QQ$, see
for instance~\mbox{\cite[Theorem~VIII.9.16]{Lang}}.
Since $p>3$ by assumption, one cannot have~\mbox{$F(t)=t^p-b$}, and hence
the polynomial $t^p-b$ must be reducible.
Thus we see that
\begin{equation}\label{eq:bcp}
b=c^p
\end{equation}
for some $c\in\QQ$,
and the roots of $F(t)$
have the form~\mbox{$x=\xi_1c$},~$\xi_2c$, and~$\xi_3c$,
where~$\xi_1$,~$\xi_2$, and~$\xi_3$ are pairwise different
$p$-th roots of unity.
We see that $\xi_1$, $\xi_2$, and $\xi_3$ form
a $\Gal\left(\bar{\QQ}/\QQ\right)$-orbit.
This is impossible by Theorem~\ref{theorem:cyclotomic}.
Alternatively, one can conclude from~\eqref{eq:bcp}
that the polynomial~\mbox{$F(ct)$} divides the $p$-th cyclotomic polynomial
$\Phi_p(t)$, and use the fact that the latter is irreducible over~$\QQ$. 
\end{proof}

For an alternative proof (and a more general statement)
of Lemma~\ref{lemma:7-over-Q} we refer the reader to
\cite[Theorem~6]{Serre}.

Finally, we prove Corollary~\ref{corollary:Q}.

\begin{proof}[Proof of Corollary~\ref{corollary:Q}]
By Proposition~\ref{proposition:cubic}, we may assume that the group
$G$ is contained in $\Aut(S)$. Then the order of $G$
is divisible only by primes not exceeding $3$
by Lemma~\ref{lemma:7-over-Q}.
On the other hand, we know
from Theorem~\ref{theorem:main}(i) (or from Lemma~\ref{lemma:3k2})
that the order of $G$ is odd.
Therefore, in this case $G$ is a $3$-group, and thus it is a
subgroup of~$\mumu_3^2$ by Corollary~\ref{corollary:3-squared-on-P2}.
\end{proof}

\begin{remark}\label{remark:referee}
Using the same arguments as in the proofs of Lemma~\ref{lemma:7-over-Q} and Corollary~\ref{corollary:Q},
one can generalize both of these assertions to arbitrary fields that contain no roots of
unity of any prime degree~\mbox{$p\ge 5$}.
\end{remark}


\begin{thebibliography}{ABCDEF}

\bibitem[Ami55]{Amitsur}
S. Amitsur. \emph{Finite subgroups of division rings}.
Trans. Amer. Math. Soc. \textbf{80} (1955), 361--386.

\bibitem[Art82]{Artin}
M. Artin. \emph{Brauer--Severi varieties}.
In {\em Brauer groups in ring theory and algebraic geometry
(Wilrijk, 1981)}, volume \textbf{917}
of {\em Lecture Notes in Math.}, pp.~194--210. Springer, Berlin-New York, 1982.

\bibitem[Bea10]{Beauville}
A. Beauville. \emph{Finite subgroups of $\mathrm{PGL}_2(K)$}.
In \emph{Vector bundles and complex geometry}, pp. 23--29.
Contemp. Math. \textbf{522}, Amer. Math. Soc., Providence, RI, 2010.

\bibitem[BE99]{BE99}
H. Besche, B. Eick. \emph{Construction of finite groups}.
J. Symbolic Comput. \textbf{27} (1999), no.~4, 387--404.



\bibitem[Bou58]{Bourbaki}
N. Bourbaki. \emph{\'El\'ements de math\'ematique.~23. Premi\`ere partie:
Les structures fondamentales de l'analyse. Livre~II: Alg\`ebre. Chapitre~8:
Modules et anneaux semi-simples}.
Actualit\'es Sci. Ind., no.~1261, Hermann, Paris, 1958.


\bibitem[Ch{\^{a}}44]{Chatelet}
F.~Ch{\^{a}}telet.
\emph{Variations sur un th\`eme de H. Poincar\'{e}}.
Ann. Sci. \'{E}cole Norm. Sup. (3), \textbf{61} (1944), 249--300.



\bibitem[CF67]{CasselsFrolich}
\emph{Algebraic number theory}.
Edited by J. W. S. Cassels and A. Fr\"ohlich.
Academic Press, London; Thompson Book Co., Inc., Washington, D.C., 1967.



\bibitem[DI09a]{DI}
I. Dolgachev, V. Iskovskikh. \emph{Finite subgroups of the plane
Cremona group}. In \emph{Algebra, arithmetic, and geometry:
in honor of Yu. I. Manin. Vol.~I},
pp.~443--548, Progr. Math. \textbf{269}, Birkh\"auser Boston, Boston, MA, 2009.

\bibitem[DI09b]{DolgachevIskovskikh-prime}
I. Dolgachev, V. Iskovskikh.
\emph{On elements of prime order in the plane Cremona group
over a perfect field}. Int. Math. Res. Not. IMRN 2009, no.~18, 3467--3485.


\bibitem[GA13]{Garcia-Armas}
M.~Garcia-Armas.
\emph{Finite group actions on curves of genus zero}.
J. Algebra, \textbf{394} (2013), 173--181.

\bibitem[GS06]{GilleSzamuely}
Ph. Gille, T. Szamuely.
\emph{Central simple algebras and Galois cohomology}.
Cambridge Studies in Advanced Mathematics, \textbf{101}.
Cambridge University Press, Cambridge, 2006.

\bibitem[GSh18]{GorchinskyShramov}
S. Gorchinskiy, C. Shramov.
\emph{Unramified Brauer group and its applications}.
Translations of Mathematical Monographs, \textbf{246}.
American Mathematical Society, Providence, RI, 2018.


\bibitem[Her53]{Herstein}
I. Herstein. \emph{Finite multiplicative subgroups in division rings}.
Pacific J. Math. \textbf{3} (1953), 121--126.

\bibitem[Isa08]{Isaacs}
M. Isaacs. \emph{Finite group theory}.
Graduate Studies in Mathematics, \textbf{92}.
AMS, Providence, RI, 2008.


\bibitem[Kol16]{Kollar-SB}
J. Koll{\'a}r.
\emph{Severi--Brauer varieties; a geometric treatment}.
arXiv:1606.04368 (2016)

\bibitem[Lan65]{Lang}
S. Lang. \emph{Algebra}.
Addison-Wesley Publishing Co., Inc., Reading, Mass., 1965.

\bibitem[Pop14]{Popov14}
V.~Popov. \emph{Jordan groups and automorphism groups of algebraic varieties}.
in \emph{Automorphisms in birational and affine geometry}, pp.~185--213,
Springer Proc. Math. Stat., \textbf{79}, Springer, Cham, 2014.

\bibitem[Ser07]{Serre}
J.-P. Serre. \emph{Bounds for the orders of the finite subgroups of~$G(k)$}.
Group representation theory, 405--450, EPFL Press, Lausanne, 2007.

\bibitem[Sh20a]{Shramov-SB}
C. Shramov. \emph{Birational automorphisms of Severi--Brauer
surfaces}. Sb. Math., \textbf{211} (2020), no.~3, 466--480.


\bibitem[Sh20b]{Shramov-SB-short}
C. Shramov. \emph{Non-abelian
groups acting on Severi--Brauer surfaces}.
arXiv:2006.01214 (2020)

\bibitem[Sh20c]{Shramov-Cubics}
C. Shramov. \emph{Automorphisms of cubic surfaces without points}.
arXiv:2006.02531 (2020)


\bibitem[ShV20]{SV}
C. Shramov, V. Vologodsky.
\emph{Boundedness for finite subgroups of linear algebraic groups}. 
arXiv:2009.14485 (2020)

\bibitem[Ste89]{Stern}
L. Stern. \emph{On the norm groups of global fields}.
J. Number Theory \textbf{32} (1989), no.~2, 203--219.


\bibitem[Wed21]{Wedderburn}
J. Wedderburn. \emph{On division algebras}.
Trans. Amer. Math. Soc. \textbf{22} (1921), no.~2, 129--135.

\bibitem[Yas17]{Yasinsky}
E.~Yasinsky.
\emph{The Jordan constant for Cremona group of rank~$2$}.
Bull. Korean Math. Soc. \textbf{54} (2017), no.~5, 1859--1871.


\end{thebibliography}
\end{document}